\long\def\remove#1{}
\newtheorem{theorem}{Theorem}[section] 
\newenvironment{proof}{{\em Proof:}}{\hfill{\hfill\rule{2mm}{2mm}}}
\definecolor{darkred}{rgb}{0.8, 0.2, 0.2}
\definecolor{darkgreen}{rgb}{0.5, 0.8, 0.1}
\definecolor{darkpurple}{rgb}{1.0, 0, 1.0}
\definecolor{darkblue}{rgb}{0, 0, 1.0}
\def\namedlabel#1#2{\begingroup
    #2%
    \def\@currentlabel{#2}%
    \phantomsection\label{#1}\endgroup:
}
\newif\ifsolution
\newcounter{fakeequation}
\newcounter{fakeeqtmp}
\newcounter{fakeequationLP}
\newcounter{fakeeqtmpLP}    
\newcommand{\R}{\mathbb{R}}
\newcommand{\Z}{\mathbb{Z}}
\newcommand{\Q}{\mathbb{Q}}
\newcommand{\field}{\mathbb{F}}
\newcommand{\Chains}{\mathbf{C}}
\newcommand{\p}[0]{\mathbf{p}}
\newcommand{\q}[0]{\mathbf{q}}
\newcommand{\x}[0]{\mathbf{x}}
\newcommand{\Homologies}[0]{\mathbf{H}}
\newcommand{\Boundaries}[0]{\mathbf{B}}
\newcommand{\Simplices}[0]{\mathbf{S}}
\newcommand{\Cycles}[0]{\mathbf{Z}}
\newcommand{\originalrep}{\mathbf{x}^{Orig}}
\newcommand{\optimalrep}{\mathbf{x}}
\newcommand{\orepentry}{x}
\newcommand{\chain}{\mathbf{c}}
\newcommand{\cycle}{{\mathbf z}}
\newcommand{\cycleu}{\mathbf{u}}
\newcommand{\cyclev}{\mathbf{v}}
\newcommand{\cyclew}{\mathbf{w}}
\newcommand{\boundingchain}{\mathbf{w}}
\newcommand{\hclass}{h} 
\newcommand{\tab}{Table }
\newcommand{\se}{Section }
\newcommand{\fig}{Figure }
\newcommand{\volvec}{\mathbf{v}}
\newcommand{\supp}{\mathrm{supp}}
\newcommand{\eq}{Equation }
\newcommand{\NI}{^{NI}}
\newcommand{\I}{^I}
\newcommand{\unif}{_{Unif}}
\newcommand{\len}{_{Len}}
\newcommand{\area}{_{Area}}
\newcommand{\birth}{\mathrm{Birth}}
\newcommand{\death}{\mathrm{Death}}
\newcommand{\barcode}{\mathrm{Barcode}}
\newcommand{\persinterval}{\mathcal{L}}
\newcommand{\closedinterval}{[b_i,d_i]}
\newcommand{\interval}{J}
\newcommand{\loss}{\mathrm{loss}}
\newcommand{\argmin}{\mathrm{argmin}}
\newcommand{\fcyclebasis}{\mathcal{C}}
\newcommand{\setoffilteredcyclebases}{\mathrm{FCB}}
\newcommand{\setofhcyclebases}{\mathrm{HCB}}
\newcommand{\setofpersistenthcyclebases}{\mathrm{PrsHCB}}
\newcommand{\dimss}[1]{^{(#1)}}  
\newcommand{\pr}{Program }
\newcommand{\EU}{_{E\text{-}Unif}}
\newcommand{\EL}{_{E\text{-}Len}}
\newcommand{\TU}{_{T\text{-}Unif}}
\newcommand{\TA}{_{T\text{-}Area}}
\newcommand{\spann}{{\mathrm{span}}}
\renewcommand{\arraystretch}{1.5}
\newcommand{\hcyclebasis}{\mathcal B}
\newcommand{\simplex}{\sigma}
\newcommand{\feasibleset}{\mathcal{X}}    
\newcommand{\Edge}{\mathrm{Edge}}
\newcommand{\Tri}{\mathrm{Tri}}
\newcommand{\goodcycleindices}{\mathcal P}
\newcommand{\goodtriangles}{\mathcal Q}
\newcommand{\goodedges}{\mathcal R}
\newcommand{\goodvolmatrix}{\partial_{2}[\mathcal{F}_1, \hat {\mathcal{F}_2}]}
\newcommand{\deathbasis}{\mathcal D}
\newcommand{\obasis}{Z} 
\newcommand{\obasisel}{\mathbf{z}}  
\newcommand{\cald}{\mathcal D}
\newcommand{\cale}{\mathcal E}
\newcommand{\calm}{\mathcal M}
\newcommand{\caln}{\mathcal N}
\theoremstyle{plain}
\theoremstyle{definition}
\newtheorem{definition}[theorem]{Definition}
\newtheorem{remark}[theorem]{Remark}
\newcommand{\low}{\mathrm{low}}
\newcommand{\hatgraph}{{\hat{\Gamma}}}
\title{Minimal Cycle Representatives in Persistent Homology using Linear Programming: an Empirical Study with User’s Guide}
\author{Lu Li \thanks{Macalester College, Mathematics, Statistics, and Computer Science Department, Saint Paul, MN, USA.} \\ lli1@macalester.edu  \and Connor Thompson \thanks{Purdue University, Department of Mathematics, West Lafayette, IN, USA. } \\ thomp774@purdue.edu \and  Gregory Henselman-Petrusek \thanks{Mathematical Institute, University of Oxford, UK. } \\  henselmanpet@maths.ox.ac.uk \and  Chad Giusti \thanks{University of Delaware, Department of Mathematical Sciences, Newark, DE, USA. } \\ cgiusti@udel.edu \and  Lori Ziegelmeier \footnotemark[1] \\lziegel1@macalester.edu}
\date{}
\begin{document}

\maketitle

\begin{abstract}
Cycle representatives of persistent homology classes can be used to provide descriptions of topological features in data. However, the non-uniqueness of these representatives creates ambiguity and can lead to many different interpretations of the same set of classes. One approach to solving this problem is to optimize the choice of representative against some measure that is meaningful in the context of the data. In this work, we provide a study of the effectiveness and computational cost of several $\ell_1$-minimization optimization procedures for constructing homological cycle bases for persistent homology with rational coefficients in dimension one, including uniform-weighted and length-weighted edge-loss algorithms as well as uniform-weighted and area-weighted triangle-loss algorithms. We conduct these optimizations via standard linear programming methods, applying general-purpose solvers to optimize over column bases of simplicial boundary matrices. 

Our key findings are: 
(i) optimization is effective in reducing the size of cycle representatives, though the extent of the reduction varies according to the dimension and distribution of the underlying data, (ii) the computational cost of optimizing a basis of cycle representatives exceeds the cost of computing such a basis, in most data sets we consider, (iii) the choice of linear solvers matters a lot to the computation time of optimizing cycles, (iv) the computation time of solving an integer program is not significantly longer than the computation time of solving a linear program for most of the cycle representatives, using the Gurobi linear solver, (v) strikingly, whether requiring integer solutions or not, we almost always obtain a solution with the same cost and almost all solutions found have entries in $\{-1, 0, 1\}$ and therefore, are also solutions to a restricted $\ell_0$ optimization problem, and (vi) we obtain qualitatively different results for generators in Erd\H{o}s-Rényi random clique complexes than in real-world and synthetic point cloud data. 

\small \textbf{ Keywords:  topological data analysis, computational persistent homology, minimal cycle representatives, generators, linear programming, $\ell_1$ and $\ell_0$ minimization}
\end{abstract}

\section{Introduction}
\label{intro}

Topological data analysis (TDA) uncovers 
mesoscale structure in data by quantifying its shape using methods from algebraic topology. 
Topological features have proven effective when characterizing complex data, as they are qualitative, independent of choice of coordinates, and robust to some choices of metrics and moderate quantities of noise \cite{ghrist2014elementary,Carlsson2009TopologyAD}. 
As such, topological features extracted from data have recently drawn attention from researchers in various fields including, for example, neuroscience \cite{giusti2016two, bendich2016persistent, robert}, computer graphics \cite{pointcloud-topo, singh2007topological}, robotics \cite{pathplanning, VASUDEVAN20113292},  and computational biology \cite{collectivemotion, selectingbiologicalexperiments/journal.pone.0213679, zebrafish} (including the study of protein structure   \cite{ Usingpersistenthomologyanddynamicaldistancestoanalyzeproteinbinding,xia2016multiscale,xia2014persistent}.)  

The primary tool in TDA is \textit{persistent homology} (PH) \cite{Ghrist08}, which describes how topological features of data, colloquially referred to as ``holes", evolve as one varies a real-valued parameter. Each hole comes with a geometric notion of  \emph{dimension} which describes the shape that encloses the hole: connected components in dimension zero, loops in dimension one, shells in dimension two, and so on. From a parameterized topological space $X = (X_t)_{t \in S \subset \R_{\ge 0}}$, for each dimension $n$, PH produces a collection $\barcode_n(X)$ of lifetime intervals $\persinterval$ which encode for each topological feature the parameter values of its birth, when it first appears, and death, when it no longer remains.

A basic problem in the practical application of PH is interpretability: given an interval $\persinterval \in \barcode_n(X)$, how do we understand it in terms of the underlying data? A reasonable approach would be to find an element of the homology class, also known as a cycle representative, that  witnesses structure in the data that has meaning to the investigator. In the context of geometric data, this takes the form of an ``inverse problem,"  constructing  geometric structures corresponding to each persistent interval in the original input data.
For example, a representative for an interval $\persinterval \in \barcode_1(X)$ consists of a closed curve or linear combination of closed curves which enclose a set of holes across the family of spaces $(X_t)_{t \in \persinterval \subset S}$. Cycle representatives are used in \cite{emmett2015multiscale} to annotate particular loops as chromatin interactions, and  \cite{wu} uses cycle representatives to study and locate and reconstruct fine muscle columns in  cardiac trabeculae restoration.

\begin{figure}[!h]
    \centering
    \includegraphics[width=0.8\textwidth]{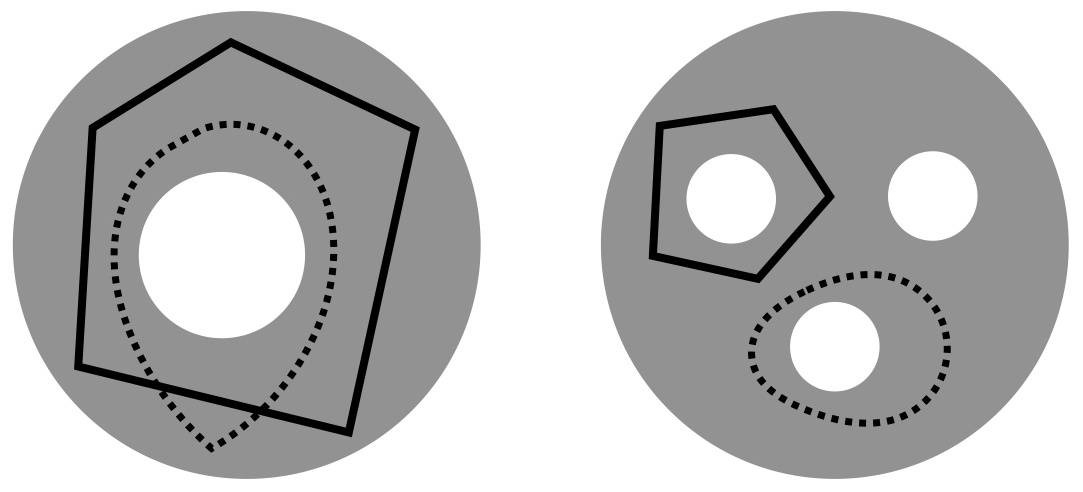}
    \caption{Two disks (grey) --- which we regard as 2-dimensional simplicial complexes, though the explicit decomposition into simplices is not shown --- with different numbers of holes (white) and cycle representatives (solid or dotted) from \cite{Carlsson2009TopologyAD}. The disk on the left has a single 2-dimensional  ``hole'' ($\beta_1 = 1$), and the two loops around it are cycle representatives for the same homology class. Similarly, the disk on the right has three ``holes'' ($\beta_1 = 3$) and the two loops shown are cycle representatives for different homology classes.
    }
    \label{fig:generatorExamples}
\end{figure}

An important challenge, however, is that cycle representatives are not uniquely defined. For example, in the left-hand image in \fig \ref{fig:generatorExamples} from \cite{Carlsson2009TopologyAD}, two curves enclose the same topological feature and thus, represent the same persistent homology class. We often want to find a cycle that captures not only the existence but also information about the location and shape of the hole that the homology class has detected. This often means optimizing an application-dependent property using the underlying data, e.g. finding a minimal length or bounding area/volume using an appropriate metric. The algorithmic problem of selecting such optimal representatives is currently an active area of research \cite{dey2011optimal,dey2018,Obayashi2018,wu,chen2010measuring}. 

There are diverse notions of optimality we may wish to consider in a given context, and which may have significant impact on the effectiveness or suitability of  optimization, including  
\begin{itemize}
    \item weight assignment to chains (uniform versus length or area weighted), 
 \item choice of loss function ($\ell_0$ versus $\ell_1$), 
 \item formulation of the optimization problem (cycle size versus bounded area or volume), and \item restrictions on allowable coefficients (rational, integral, or $\{0,1,-1\}$).  
 \end{itemize}
 Each has a unique set of advantages and disadvantages. For example, optimization using the $\ell_0$ norm with $\{0, 1, -1\}$-coefficients is thought to yield the most interpretable results, but $\ell_0$ optimization is NP-hard, in general \cite{chenhardness}. 
The problem of finding $\ell_1$ optimal cycles with rational coefficients, can be formulated as a more tractable linear programming problem.
While some literature exists to inform this choice \cite{dey2011optimal,Escolar2016,Obayashi2018}, questions of basic importance remain, including: 

\begin{enumerate}
  \item[Q1] How do the computational costs of the various optimization techniques compare? How much do these costs depend on the choice of a particular linear solver? 
  \item[Q2] What are the statistical properties of optimal cycle representatives? For example,  how often does the support of a representative form a single loop in the underlying graph? And,  how much do optimized cycles coming out of an optimization pipeline differ from the representative that went in?     
    \item[Q3] To what extent does choice of technique matter? For example, how often does the length of a length-weighted optimal cycle match the length of a uniform-weighted optimal cycle? 
    And, how often are $\ell_1$ optimal representatives $\ell_0$ optimal? 
\end{enumerate}

Given the conceptual and computational complexity of these problems (see \cite{chenhardness}), the authors expect that formal answers are unlikely to be available in the near future. However, even where theoretical results are available, strong \emph{empirical} trends may suggest different or even contrary principles to the practitioner. For example, while the persistence calculation is known to have matrix multiplication time complexity  \cite{primoz}, in practice the computation runs almost always in linear time. Therefore, the authors believe that a careful empirical exploration of questions 1-3 will be of substantial value. 

In this paper, we undertake such an exploration in the context of one-dimensional persistent homology over the field of rationals, $\Q$. We focus on linear programming (LP) and mixed-integer programming (MIP) approaches due to their ease of use, flexibility, and adaptability. In doing so, we present a new treatment of parameter-dependence (vis-a-vis selection of simplex-wise refinements) relevant to common cases of rational cycle representative optimization \cite{Obayashi2018, Escolar2016}, such as finding optimal cycle bases for the persistent homology of the Vietoris-Rips complex of a point cloud. We restrict our attention to one-dimensional homology to limit the number of reported statistics and data visualizations presented, although the methods discussed could be applied to any homological dimension. 

The paper is organized as follows. \se \ref{sec:background} provides an overview of some key concepts in TDA to inform a reader new to algebraic topology and establish notation. Then, we provide a survey of previous work on finding optimal persistent cycle representatives in \se \ref{problem formulation}, and formulate the methods used in this paper to find different notions of minimal cycle representatives via LP and MIP in 
\se \ref{methodsProblems}. \se \ref{methods} describes our experiments, including overviews of the data and the hardware and software we use for our analysis. In \se \ref{results},  we discuss the results of our experiments. We conclude and describe possible future work in \se \ref{discussion}.
    
 \section{Background: Topological Data Analysis and Persistent Homology} 
\label{sec:background}

In this section, we introduce key terms in algebraic and computational topology to provide minimal background and establish notation. For a more thorough introduction see, for example, \cite{Carlsson2009TopologyAD, hatcher2002algebraic, edelsbrunner2010computational, barcodeGhrist, persistenthomologyasurvey,TZH15}.

Given a discrete set of sample data, we approximate the topological space underlying the data by constructing a \textit{simplicial complex}. This construction expresses the structure as a union of vertices, edges, triangles, tetrahedrons, and higher dimensional analogues  \cite{Carlsson2009TopologyAD}.

\noindent \textbf{Simplicial Complexes.} A \textit{simplicial complex} is a collection $K$ of non-empty subsets of a finite set $V$. The elements of $V$ are called \textit{vertices} of $K$, and the elements of $K$ are called \textit{simplices}. A simplicial complex has the following properties: (1) $\{v\}$ in $K$ for all $v \in V$, and (2) $\tau \subset \sigma$ and $\sigma \in K$ guarantees that $\tau \in K$.

Additionally, we say that a simplex has \textit{dimension n} or is an \textit{n-simplex} if it has cardinality \textit{n+1}. We use $\Simplices_n(K)$ to denote the collection of \textit{n}-simplices contained in $K$.

While there are a variety of approaches to create a simplicial complex from data, our examples use a standard construction for approximation of point clouds.  Given a metric space $X$ with metric $d$ and real number $\epsilon \ge 0$, the \textit{Vietoris-Rips complex} for $X$, denoted by $\text{VR}_\epsilon(X)$, is defined as $$\text{VR}_\epsilon (X) = \{\sigma \in \Simplices_n(K) \mid d(x,y) \leq  \epsilon \text{ for all } x, y \in \sigma\}.$$
That is, given a set of discrete points $X$ and a metric $d$, we build a VR complex at scale $\epsilon$ by forming an $n$-simplex if and only if $n+1$ points in $X$ are pairwise within $\epsilon$ distance of each other. 

\noindent \textbf{Chains and chain complexes.}
Given a simplicial complex $K$ and an abelian group  $G$, the \emph{group of $n$-chains in $K$ with coefficients in $G$} is defined as
    \begin{align*}
        \Chains_n(K; G) 
        :=
        G^{\Simplices_n(K)}.
    \end{align*}
Formally, we regard $G^{\Simplices_n(K)}$ as a group of functions $\Simplices_n(K) \to G$ under element-wise addition. Alternatively, we may view $\Chains_n(K; G)$ as a group of formal $G$-linear combinations of $n$-simplices, i.e. $\left \{\sum_{\sigma}x_{\sigma} \sigma \mid x_{\sigma} \in G \text{ and } \sigma \in \Simplices_n(K) \right \}$.

\begin{remark}\label{rm:group}
We will focus on the cases where $G$ is $\Q$ (the field of rationals), $\Z$ (the group of integers), or $\field_2$ (the 2-element field).  Since we are most interested in the case $G = \Q$, we adopt the shorthand $\Chains_n(K) = \Chains_n(K,\mathbb{Q})$. 
\end{remark}

An element $\optimalrep = (\orepentry_\sigma)_{\sigma \in \Simplices_n(K)} \in G^{\Simplices_n(K)}$ is called an \emph{$n$-chain} of $K$.   As  in this example, we will generally use a bold-face symbol for the tuple $\optimalrep$ and corresponding light-face symbols for entries $\orepentry_\sigma$.  The \emph{support} of an $n$-chain is the set of simplices on which $\optimalrep_\sigma$ is nonzero:  
    \begin{align*}
        \supp(\optimalrep)  :=\{ \sigma \in \Simplices_n(K) \mid \orepentry_\sigma \neq 0 \}.
    \end{align*}
The $\ell_0$ norm\footnote{The $\ell_0$ ``norm'' is not a real norm as it does not satisfy the homogeneous requirement of a norm. For example, scaling a vector $\optimalrep$ by a constant factor does not change its $\ell_0$ ``norm''.} and  $\ell_1$ norm\footnote{ See Remark \ref{rm:group}. These choices of groups have a natural notion of absolute value.} of $\optimalrep$ are defined as 
    \begin{align*}
        ||\optimalrep||_0 := |\supp(\optimalrep) |
        &&
        \textstyle
        ||\optimalrep||_1 := \sum_{ \sigma \in \Simplices_n(K)} | \orepentry_\sigma  |
    \end{align*}

\begin{remark}[{Indexing conventions for chains and simplices}]
\label{rmk:indexingchains}
As chains play a central role in our discussion, it will be useful to establish some special conventions to describe them.  These conventions depend on the availability of certain linear orders, either on the set of vertices or the set of simplices.

\noindent \underline{Case 1:} \emph{Vertex set $V$ has a linear order $\le$. }  Every vertex set $V$ discussed in this text will be assigned a (possibly arbitrary) linear order.  Without  risk of ambiguity, we may therefore write
    \begin{align*}
        (v_0, \ldots, v_n)
    \end{align*}
for the $n$-chain that places a coefficient of 1 on $\sigma = \{v_0 \leq \cdots \leq v_n\}$ and 0 on all other simplices. 

\noindent \underline{Case 2:} \emph{Simplex set $\Simplices_n(K)$ has a linear order $\le$.}  We will sometimes define a linear order on $\Simplices_n(K)$.  This determines a unique bijection  $\sigma \dimss{n}: \{1, \ldots, |\Simplices_n(K)|\} \to  \Simplices_n(K)$ such that $\sigma_i\dimss{n} \le \sigma_j\dimss{n}$ iff $i \le j$.  This bijection determines an isomorphism
    $$
        \phi: 
        \Chains_n(K;G) = G^{\Simplices_n(K)}
        \to
        G^{|\Simplices_n(K)|}
    $$
such that $\phi(\optimalrep)_i = \orepentry_{\sigma_i}$ for all $i$.  
Provided a linear order $\le$,  we will use $\optimalrep$ to denote both $\optimalrep$ and $\phi(\optimalrep)$ and rely on context to clarify the  intended meaning.
\end{remark}

For each $n\geq 1$, the \textit{boundary map} $\partial_n: \Chains_n(K) \rightarrow \Chains_{n-1}(K)$ is the linear transformation defined on a basis vector  $(v_0, v_1, \ldots, v_n)$ by 
    \begin{align*}
    \textstyle
        \partial_n(v_0, v_1, \ldots, v_n) = \sum_{i=0}^n (-1)^i (v_0, \ldots, \hat{v_i}, \ldots, v_n)
    \end{align*}
where $\hat{v_i}$ omits $v_i$ from the vector. This map extends linearly from the basis of $n$-simplices to any $n$-chain in $\Chains_n(K)$. By an abuse of notation, we also denote the matrix representation of this boundary map, known as the \textit{boundary matrix}, as $\partial_n$. The boundary matrix is parametrized by the $n$-simplices $S_n(K)$ along the columns and $n-1$-simplices $S_{n-1}(K)$ along the rows.

The collection  $(\Chains_n(K))_{n\geq 0}$ along with the boundary maps $(\partial_n)_{n\geq 0}$ form a \textit{chain complex}
\[\ldots \Chains_{n+1}(K) \xrightarrow{\partial_{n+1}} \Chains_{n}(K) \xrightarrow{\partial_{n}} \Chains_{n-1}(K) \xrightarrow{\partial_{n-1}} \ldots \xrightarrow{\partial_3} \Chains_2(K) \xrightarrow{\partial_2} \Chains_1(K) \xrightarrow{\partial_1} \Chains_0(K) \xrightarrow{\partial_0} 0. \]
 
\begin{remark}[{Indexing conventions for boundary matrices}]
\label{rmk:boundarymatrixindexing}
In general, boundary matrix $\partial_n$ is regarded as an element of $G^{\Simplices_{n-1}(K) \times \Simplices_{n}(K)}$, that is, as an array with columns labeled by $n$-simplices and rows labeled by $n-1$-simplices.  However, given linear orders on  $\Simplices_{n-1}(K)$ and $\Simplices_{n}(K)$, we may naturally regard $\partial_n$ as an element of $G^{|\Simplices_{n-1}(K)| \times |\Simplices_{n}(K)|}$, see\ Remark \ref{rmk:indexingchains}. 
\end{remark}

\noindent \textbf{Cycles, boundaries.}  The \emph{boundary} of an $n$-chain $\optimalrep$ is  $\partial_n (\optimalrep)$.
An \textit{$n$-cycle} is an $n$-chain with zero boundary. The set of all $n$-cycles forms a subspace $\Cycles_n(K) := \textbf{ker}(\partial_n)$ of $\Chains_n(K).$ An \textit{$n$-boundary} is an $n$-chain that is the boundary of $(n+1)$-chains. The set of all $n$-boundaries forms a subspace $\Boundaries_n(K):= \textbf{im}(\partial_{n+1})$ of $\Chains_n(K).$   We refer to $\Cycles_n$ and $\Boundaries_n$ as the \emph{space of cycles} and \emph{space of boundaries}, respectively.

It can be shown that $\partial_n \circ \partial_{n+1}(\optimalrep) = 0$ for all $\optimalrep \in \Chains_{n+1}(K)$; colloquially,  ``a boundary has no boundary''. Equivalently,  $\partial_n \circ \partial_{n+1}$ is the zero map.
Since the boundary map takes a boundary to $0$, an $n$-boundary must also be an $n$-cycle. Therefore, $\Boundaries_n(K) \subseteq \Cycles_n(K)$.

\noindent \textbf{Homology, cycle representatives.} The \emph{$n$th homology group} of $K$ is defined as  the quotient
    \begin{align*}
        \Homologies_n(K): = \Cycles_n(K) / \Boundaries_n(K).
    \end{align*}
Concretely, elements of $\Homologies_n(K)$ are cosets of the form $[\cycle] = \{ \cycle'  \in \Cycles_n(K) | \cycle' - \cycle \in \Boundaries_n(K)\}$.\footnote{More generally, we denote the groups of cycles and boundaries with coefficients in $G$ as $\Cycles_n(K; G)$ and $\Boundaries_n(K; G)$.  The \emph{(dimension-$n$) homology of $K$ with coefficients in $G$} is $\Homologies_n(K; G) = \Cycles_n(K; G) / \Boundaries_n(K; G)$.}  An element $h \in \Homologies_n(K)$ is called an \emph{$n$-dimensional homology class}.  We say that a cycle $\cycle \in \Cycles_n(K)$ \emph{represents} $h$, or that $\cycle$ is a \emph{cycle representative of $h$} if $h = [\cycle]$.  We say that $\cycle$ and $\cycle'$ are \emph{homologous} if $[\cycle] = [\cycle']$.

\begin{figure}[H]
\begin{center}
\includegraphics[width=0.8\textwidth]{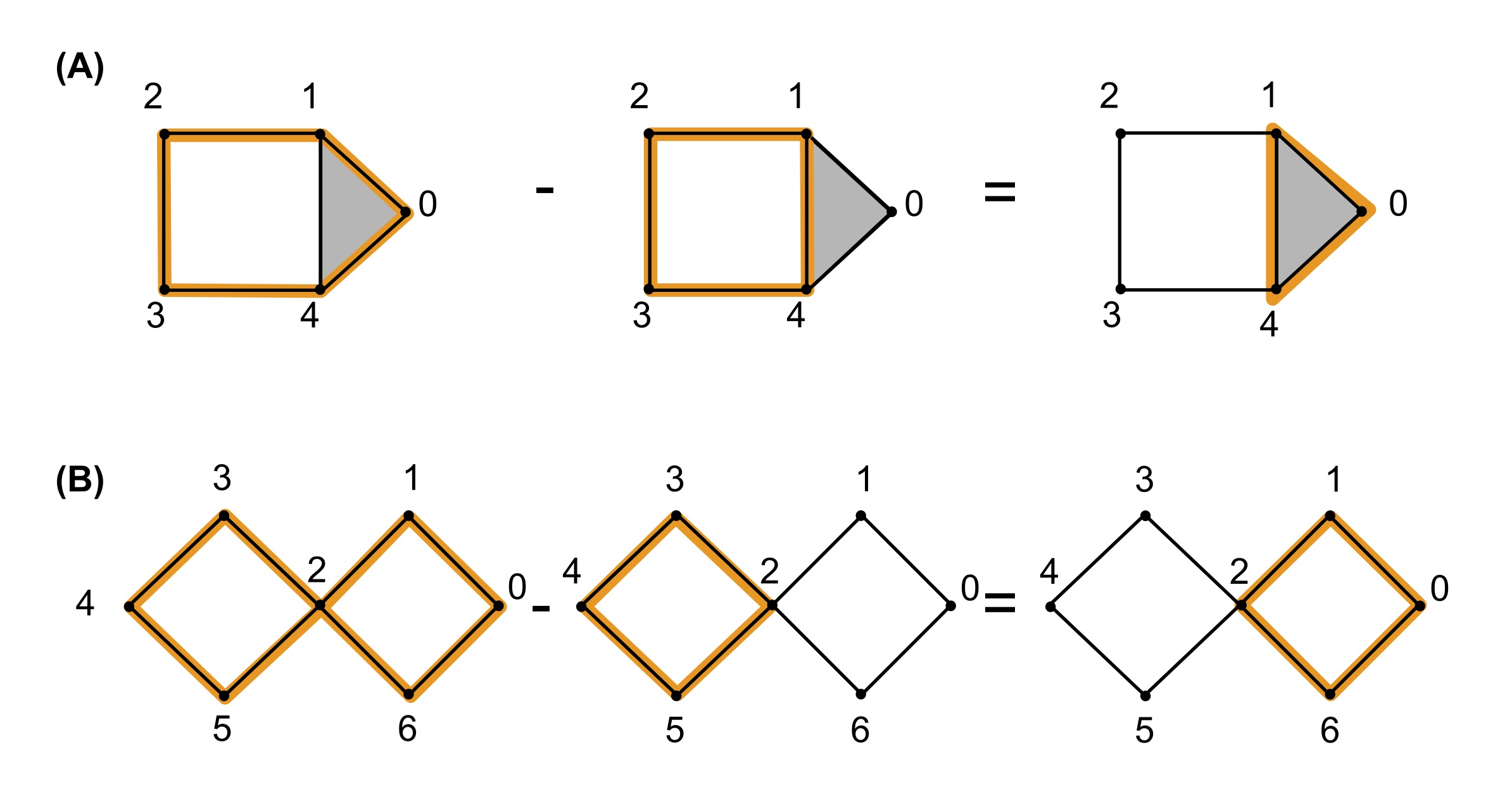} 
\end{center}
\caption{We show an example of homologous cycles in \textbf{(A)}, taken from \cite{TZH15}. The 1-cycle $(0,1) + (1,2) + (2,3) + (3,4) - (0,4)$ and the 1-cycle $(1,2) + (2,3) + (3,4) - (4,1)$ are homologous because their difference is the boundary of $(0,1,4)$. Subfigure \textbf{(B)} shows an example of non-homologous cycles. The 1-cycle $(\sum_{i=0}^4 (i, i+1))-(5,2)+(2,6)-(0,6)$ and the 1-cycle $(2,3) + (3,4)+(4,5)-(2,5)$ are not homologous because their difference is a cycle $(0,1)+(1,2)+(2,6)-(0,6)$ which is not a linear combination of boundaries of 2-simplices. } \label{fig:boundaryexample}
\end{figure}

\noindent \underline{Example} Consider the example in Figure \ref{fig:boundaryexample} (A), which illustrates two homologous 1-cycles and the example in Figure \ref{fig:boundaryexample} (B), which illustrates two non-homologous cycles.

\begin{remark}
The term \emph{homological generator} has been used differently by various authors: to refer to an arbitrary nontrivial homology class, an element in a (finite) representation of $\Homologies_n(K)$, as a set of cycles which generate the homology group, or (particularly in literature surrounding optimal cycle representatives)  interchangeably with cycle representative. We favor the term cycle representative, to avoid ambiguity.
\end{remark}

\noindent \textbf{Betti numbers, cycle bases.}  A \emph{(dimension-$n$) homological cycle basis} for $\Homologies_n(K)$ is a set of cycles $\hcyclebasis = \{ \cycle^1 , \ldots, \cycle^m\}$ such that $[\cycle^i] \neq [\cycle^j]$ when $i \neq j$, and $\{ [\cycle^1] , \ldots, [\cycle^m]\}$ is a  basis for $\Homologies_n(K)$.  Modulo boundaries, every $n$-cycle can be expressed as a unique linear combination in $\hcyclebasis$.  

Homological cycle bases have several useful interpretations.  It is common, for example, to think of a 1-cycle as a type of ``loop,'' generalizing the intuitive notion of a loop as a simple closed curve to include more intricate structures, and to regard the operation of adding boundaries as a generalized form of ``loop-deformation.''  Framed in this light, a homological cycle basis $\hcyclebasis$ for $\Homologies_1(K)$ can be regarded as a basis for the space of loops-up-to-deformation in $K$. Higher dimensional analogs of loops involve closed ``shells'' made up of $n$-simplices.

Another interpretation construes each nontrivial homology class $[\cycle] \neq 0$ as a \emph{hole} in $K$. Such holes are ``witnessed" by loops or shells that are not homologous to the zero cycle. Viewed in this light, $\Homologies_n(K)$ can naturally be regarded as the space of $(n+1)$-dimensional holes in $K$.  The rank of the $n$th homology group
    \[
    \beta_n(K) := \text{dim}(\Homologies_n(K)) = \text{dim}(\Cycles_n(K)) - \text{dim}(\Boundaries_n(K)),
    \]
therefore quantifies the ``number of independent holes'' in $K$.  We call $\beta_n$ the \emph{$n$th Betti number of $K$}.  
 
\noindent \underline{Example} Consider the gray disks in \fig \ref{fig:generatorExamples} (similar to a figure from  \cite{Carlsson2009TopologyAD}) with different numbers of holes and cycle representatives.

\noindent \textbf{Filtrations of simplicial complexes.} A \emph{filtration} on a simplicial complex $K$ is a nested sequence of  simplicial complexes $K_\bullet = (K_{\epsilon_i})_{i \in\{ 1, \ldots, T\}}$ such that
    $$
    K_{\epsilon_1} \subseteq K_{\epsilon_2} \subseteq \cdots \subseteq K_{\epsilon_T} = K
    $$
where $\epsilon_1 < \cdots < \epsilon_T$ are real numbers. A \emph{filtered simplicial complex} is a simplicial complex equipped with a filtration $K_\bullet$.

\noindent \underline{Example}
 Let  $X$ be a metric space with metric $d$, and let  $\epsilon_1 < \cdots < \epsilon_T$ be an increasing sequence of non-negative real numbers.  Then the sequence $K_\bullet = (K_{\epsilon_i})_{i \in\{ 1, \ldots, T\}}$ defined by $K_{\epsilon_i} = \text{VR}_{\epsilon_i}(X)$ is a filtration on $K$.

The data of a filtered complex is naturally captured by the \emph{birth} function on simplices, defined
    \begin{align*}
        \birth: K \to \R, \; \simplex \mapsto \min\{ \epsilon_i : \simplex \in K_{\epsilon_i} \}.
    \end{align*}
We regard the pair $(K, \birth)$ as a simpilicial complex whose simplices are weighted by the birth function.   For convenience, we will implicitly identify the sequence $K_\bullet$ with this weighted complex.   Thus, for example, when we say that $\simplex \in K$ has birth parameter $t$, we mean that  $\sigma\in K$ and  $\birth(\sigma) = t$.

\begin{definition}
A filtration $K_\bullet$ is \emph{simplex-wise} if one can arrange the simplices of $K$ into a sequence $(\simplex_1, \ldots, \simplex_{|K|})$ such that $K_{\epsilon_i} = \{\simplex_1, \ldots, \simplex_i\}$ for all $i$.  
A \emph{simplex-wise refinement}  of $K_\bullet$ is a simplex-wise filtration $K_\bullet'$ such that each space in $K_\bullet$ can be expressed in form $\{\simplex_1, \ldots, \simplex_j\}$ for some $j$.
\end{definition}

As an immediate corollary, given a simplex-wise refinement of $K_\bullet$, we may naturally interpret each boundary matrix $\partial_n$ as an element of $G^{|\Simplices_{n-1}(K)| \times |\Simplices_{n}(K)|}$, see Remark \ref{rmk:boundarymatrixindexing}.  Under this interpretation, columns (respectively, rows) with larger indices correspond to simplices with later birth times; that is, birth time increases as one moves left-to-right and top-to-bottom.

\noindent \textbf{Filtrations of chain complexes.} If we regard $\Chains_n(K_{\epsilon_i}; G)$ as a family of formal linear combinations in $\Simplices_n(K_{\epsilon_i})$, then it is natural to consider $\Chains_n(K_{\epsilon_i}; G)$ as a subgroup of $\Chains_n(K_{\epsilon_{j}}; G)$ for all $i<j$.  In particular, we have an inclusion map     \begin{align*}
    \textstyle
    \iota: \Chains_n(K_{\epsilon_i}; G) \to \Chains_n(K_{\epsilon_j}; G),
    \quad
    \sum_{\sigma \in \Simplices_n(K_{\epsilon_i})} x_\sigma \sigma
    \mapsto
    \sum_{\sigma \in \Simplices_n(K_{\epsilon_i})} x_\sigma \sigma
    +
    \sum_{\tau \notin \Simplices_n(K_{\epsilon_i})} 
    0 \cdot \tau
    \end{align*}

Given a simplex-wise refinement $K'_\bullet$, one can naturally regard $\chain$ as an element  $(c_1, c_2,  \ldots)$ of $ G^{|\Simplices_n(K_{\epsilon_i})|}$.  From this perspective, $\iota$ has a particularly simple interpretation, namely  ``padding'' by zeros:
    \begin{align*}
        \iota(\chain) = ( \underbrace{c_1, c_2, \ldots}_{\chain}, 0, \ldots, 0)
    \end{align*}
Similar observations hold when one replaces $\Chains_n$ with either $\Cycles_n$, the space of cycles, or $\Boundaries_n$, the space of boundaries.

\noindent \textbf{Persistent homology, birth, death.} The notion of birth for simplices has a natural extension to chains, as well as a variant called death.  Formally,  the \emph{birth} and \emph{death} parameters of  $\chain \in \Chains_n(K)$ are 
    \begin{align*}
    \birth(\chain) = \min \{\epsilon_i : \chain \in \Chains_n(K_{\epsilon_i}) \}
    &&
    \death(\chain) 
    = 
    \begin{cases}
    \min \{\epsilon_i : \chain \in \Boundaries(K_{\epsilon_i}) \} & \chain \in \Boundaries(K)
    \\
    \infty & else.
    \end{cases}
    \end{align*}
In the special case where $\chain$ is a cycle,  $\birth(\chain)$ is the first parameter value where $[\chain]$ represents a homology class, and $\death(\chain)$ is the first parameter value where $[\chain]$ represents the \emph{zero} homology class.   Thus, the half-open
\emph{lifespan interval} 
    \begin{align*}
        \persinterval(\chain) = [\birth(\chain), \death(\chain))
    \end{align*}
is the range of parameters over which $\chain$ represents a well-defined, nonzero homology class.

A \emph{(dimension-$n$) persistent homology cycle basis} is a subset $\hcyclebasis \subseteq \Cycles_n(K)$ with the following two properties:
    \begin{enumerate}
    \item Each $\cycle \in \hcyclebasis$ has a nonempty lifespan interval.
    \item For each $i \in \{1, \ldots, T\}$, the set 
        $$
        \hcyclebasis_{\epsilon_i} 
        := 
        \{\cycle \in \hcyclebasis : \epsilon_i \in \persinterval(\cycle) \}
        $$
    is a homological cycle basis for $\Homologies_n(K_{\epsilon_i})$.
    \end{enumerate}

Every filtration of simplicial complexes $ (K_{\epsilon_i})_{i \in\{ 1, \ldots, T\}}$ admits a  persistent homological cycle basis  $\hcyclebasis$ \cite{zomorodiancarlssoncomputingph}.  
Moreover, it can be shown that the \emph{multiset} of lifespan intervals (one for each basis vector), called the \emph{dimension-$n$ barcode of $K_\bullet$},
    \begin{align*}
        \barcode_n = 
        \{ \persinterval(\cycle) : \cycle \in \hcyclebasis \}
    \end{align*}
is invariant over all possible choices of persistent homological cycle bases $\hcyclebasis$ \cite{zomorodiancarlssoncomputingph}.

\begin{figure}[h!]
\begin{center}
\includegraphics[width=0.9\textwidth]{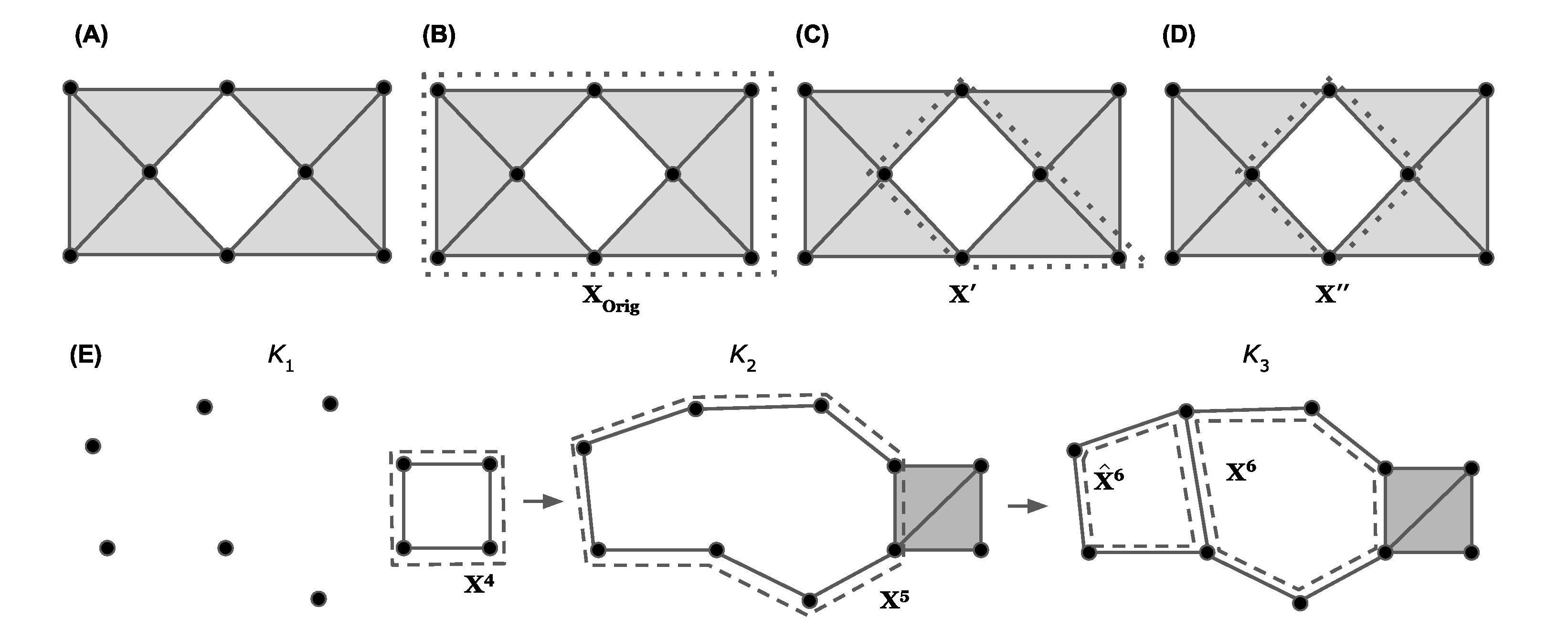} 
\end{center}
\caption{Examples of optimizing a cycle representative (using the notion of minimizing edges) within the same homology class (\textbf{A-D}) and using a basis of cycle representatives (\textbf{E}), modified examples taken from \cite{Escolar2016} and \cite{Obayashi2018}. The dotted lines represent a cycle representative for the enclosed ``hole''. Intuitively, we consider $\optimalrep''$ in (\textbf{D}) as the optimal cycle representative since it consists of the smallest number of edges. Subfigure (\textbf{E}) shows a case where we optimize a cycle representative using a basis of cycle representatives. In (\textbf{E}), $\{\optimalrep^4, \optimalrep^5, \optimalrep^6\}$ is the original basis of cycle representatives. We can substitute $\optimalrep^6$ with $\hat{\optimalrep}^6$, which we can obtain by adding $\optimalrep^5$ to $\optimalrep^6$, and thus obtain $\{\optimalrep^4, \optimalrep^5, \hat{\optimalrep}^6\}$ as the new basis of cycle representatives.}\label{fig:example-optimal}
\end{figure}

\noindent \underline{Example}  Consider the sequence of simplicial complexes $(K_1, K_2, K_3)$ shown in Figure \ref{fig:example-optimal} (E).  The set
    $
        \hcyclebasis = \{\optimalrep^4, \optimalrep^5, \optimalrep^6 \}
    $
is a (dimension-1) persistent homological cycle basis of the filtration.  The associated dimension-1 barcode is     
    $
    \barcode_1 = \{[1,2), [2,\infty), [3, \infty) \} 
    $ 
where $[2,\infty)$ and $[3,\infty)$ are the lifespans of  $\optimalrep^5$ and $\optimalrep^6$, respectively.

Barcodes are among the foremost tools in topological data analysis \cite{barcodeGhrist,  persistenthomologyasurvey}, and they contain a great deal of information about a filtration.  For example, it follows  immediately from the definition of persistent homological cycle bases  that
    $
        \beta_n(K_{\epsilon_i})
        =
        |\hcyclebasis_{\epsilon_i}|
    $
for all $n$ and $i$.  Consequently,
    \begin{align*}
        \beta_n(K_{\epsilon_i})
        =
        |\{\interval \in \barcode_n : \epsilon_i \in \interval \}|.
    \end{align*}

\noindent \textbf{Computing PH cycle representatives.} Barcodes and persistent homology bases may be computed via the so-called $R = DV$ decomposition \cite{cohen2006vines} of the boundary matrices $\partial_n$. Details are discussed in the Supplementary Material.

\section{Related work on minimizing cycle representatives}\label{problem formulation}

One important problem in TDA is interpreting homological features. In general, a lifetime interval $\persinterval$ corresponding to a feature may be represented by many different cycle representatives. As discussed in \cite{chenquantifying}, localizing homology classes can be characterized as finding a representative cycle with the most concise geometric measure. As an illustrative example from \cite{Escolar2016}, Figure \ref{fig:example-optimal} (A) shows a simplicial complex $K$ with $\Homologies_1(K)$ isomorphic to $\mathbb{Q}$ or equivalently, $\beta_1=1$; it contains one hole.  Figures \ref{fig:example-optimal} (B), (C), and (D) display three cycle representatives, $\originalrep$, $\optimalrep'$, and $\optimalrep''$, each of which represents the same homology class (heuristically, they encircle the same hole). We intuitively prefer $\optimalrep''$ as a representative, since it involves the fewest edges  and ``hugs'' the hole most tightly. Given a simplicial complex $K$ and a nontrivial cycle $\originalrep$ on it, we are interested in finding a cycle representative that is optimal with respect to some geometric criterion. In this section, we discuss previous studies on optimal cycle representatives. 

Minimal cycle representatives have proven  useful in many applications. Hiraoka et al. \cite{Hiraoka7035} use TDA to geometrically analyze amorphous solids. Their analysis using minimal cycle representatives explicitly captures hierarchical structures of the shapes of cavities and rings. Wu et al. \cite{wu} discuss an application of optimal cycles in Cardiac Trabeculae Restoration, which aims to reconstruct trabeculae, very complex muscle structures that are hard to detect by traditional image segmentation methods. They propose to use topological priors and cycle representatives to help segment the trabeculae. However, the original cycle representative can be complicated and noisy, causing the reconstructed surface to be messy. Optimizing the cycle representatives makes the cycle more smooth and thus, leads to more accurate segmentation results. Emmett et al. \cite{emmett2015multiscale} use PH to analyze chromatin interaction data to study chromatin conformation. They use loops to represent different types of chromatin interactions. To annotate particular loops as interactions, they need to first localize a cycle. Thus, they propose an algorithm to locate a minimal cycle representative for a given PH class using a breadth-first search, which finds the shortest path that contains the edge that enters the filtration at the birth time of the cycle and is homologically independent from the minimal cycles of all PH classes born before the current cycle.

There are several approaches used to define an optimal cycle representative. Dey et al. \cite{dey2011optimal} propose an algorithm to find an optimal homologous $1$-cycle for a given homology class via linear programming. That is, they consider a single homology class $[\optimalrep]$ and search for a homologous cycle representative that minimizes some geometric measure within that class, for instance, the number of $1$-simplices within the representative. Escolar and Hiraoka \cite{Escolar2016} extend this approach to find an optimal cycle by using cycles outside of a single homology class to ``factor out" redundant information. In this approach, an optimal cycle representative is no longer guaranteed to be homologous to the original representative, but the collection of cycle representatives have each been independently optimized and the collection still forms a homology basis. Further, \cite{Escolar2016} extends this approach to achieve a filtered cycle basis, although we note that it is not guaranteed to be a persistent homology basis. The two approaches in \cite{dey2011optimal,Escolar2016} aim to minimize the number of $1$-simplices in a cycle representative. Obayashi \cite{Obayashi2018} proposes an alternative algorithm for finding volume-optimal cycles in persistent homology, which minimize the number of $2$-simplices which the cycle representative bounds, also using linear programming. These methods serve as the foundation for our present paper and are discussed in more detail in the rest of this section.

In addition to linear programming, many researchers have contributed to the problem of computing optimal cycles: Wu et al. \cite{wu} propose an algorithm for finding shortest persistent $1$-cycles. They first construct a graph based on the given simplicial complex and then compute annotation for the given complex. The annotation assigns all edges different vectors and can be used to verify if a cycle belongs to the desired group of cycles. They then find the shortest path between two vertices of the edge born at the birth time of the original cycle representative using a new $A^*$ heuristic search strategy. Their algorithm is a polynomial time algorithm but in the worst case, the time complexity is exponential to the number of topological features. Dey et al. \cite{shortestonedimension} propose a polynomial-time algorithm that computes a set of loops from a VR complex of the given data whose lengths approximate those of a shortest basis of the one dimensional homology group $\Homologies_1$. In \cite{dey2018}, Dey et al. show that finding optimal (minimal) persistent $1$-cycles is NP-hard and then propose a polynomial time algorithm to find an alternative set of meaningful cycle representatives. This alternative set of representatives is not always optimal but still meaningful because each persistent $1$-cycle is a sum of shortest cycles born at different indices. They find shortest cycles using Dijkstra's algorithm by considering the $1$-skeleton as a graph.   This list is by no means exhaustive, and does not touch on the wide variety of related approaches, e.g. \cite{chenhardness}, which attempts to fit cycle representatives within a ball of minimum radius.

\label{sec:minimalgenerators}

In the next subsection, we briefly introduce some basic notions of linear programming, and then in the subsequent three subsections, we survey the optimization problems on which the present work is based.

\subsection{Background: Linear Programming}

Linear programming seeks to find a set of \emph{decision variables} $\textbf{x}=(x_1,\ldots,x_\eta)^T$ which optimize a linear \emph{cost} (or \emph{objective}) \emph{function} $\textbf{c}^T\textbf{x}$ subject to a set of linear (in)equality constraints $\textbf{a}_1^T\textbf{x}=b_1, \ldots, \textbf{a}_\mu^T\textbf{x}=b_\mu$. Any linear optimization problem can be written as a \emph{Linear Program} (LP) in \emph{standard form}  

\begin{align}
   \begin{split}
    \text{minimize } & \textbf{c}^T\textbf{x} \\
    \text{ subject to } & A\textbf{x}=\textbf{b}, \\
    & \textbf{x} \geq \textbf{0}
   \end{split}
   \label{eq:linearprogram}
\end{align}
where $A$ is the $\mu \times \eta$ matrix with coefficients of the constraints as rows and $\textbf{b}=(b_1,\ldots,b_\mu)^T$. Linear programming is well-studied and discussed in many texts \cite{bertsimas-LPbook, Vanderbei-LPbook,BoyVan2004}.

The \emph{optimal solution} $\textbf{x}^*$ satisfies the constraints while optimizing the objective function, yielding the \emph{optimal cost} $\textbf{c}^T\textbf{x}^*$. The \emph{feasible} set of solutions in a linear optimization problem is a polyhedron defined by the linear constraints. In general, the optimal solution of a (non-degenerate) LP will occur at a vertex of the polyhedron and can be solved with the standard \emph{simplex algorithm}, which traverses through the edges of the polytope to vertices in a cost reducing manner, or \emph{interior point methods}, which traverse along the inside of the polytope to reach an optimal vertex. In the worst-case, the complexity of the simplex method is exponential, yet it often runs remarkably fast, while interior point methods are polynomial time algorithms.

Standard LPs search for real-valued optimal solutions, but in some instances, a restriction of the decision variables, such as requiring integral solutions, may be necessitated. The \emph{mixed integer programming} (MIP) problem is written
\begin{align}
   \begin{split}
    \text{minimize } & \textbf{c}^T\textbf{x} +\textbf{d}^T\textbf{y}\\
    \text{ subject to } & A\textbf{x} + B \textbf{y}=\textbf{b}, \\
    & \textbf{x}, \textbf{y} \geq \textbf{0}\\
    & \textbf{x} \text{ integer}
   \end{split}
   \label{eq:linearprogram}
\end{align}
for matrices $A,B$ and vectors $\textbf{b},\textbf{c},\textbf{d}$. A standard LP has fewer constraints, and thus, will have optimal cost less than or equal to that of the analogous MIP. MIPs are much more challenging to solve than LPs, as they are discrete as opposed to convex optimization problems, and no efficient general algorithm is known \cite{bertsimas-LPbook}. However, LP \emph{relaxations}, (exponential-time) exact, (polynomial-time)  approximation, and heuristic algorithms can be used to obtain solutions to MIPs. 

In this paper, we determine optimal cycle representatives with both LP and MIP formulations.

\subsection{Minimal cycle representatives of a homology class} \label{singlecyclecase}

Given a homology class $\hclass =[\originalrep] \in \Homologies_n(K; G)$ and a function $\loss: \Cycles_n(K;G) \to \R$, how does one find a cycle representative of $\hclass$ on which $\loss$ attains minimum?  This problem is equivalent to  solving the following program defined in \cite{dey2011optimal}:
\begin{align}
   \begin{split}
    \text{minimize } & \loss(\optimalrep) \\
    \text{ subject to } & \optimalrep = \originalrep + \partial_{n+1} \boundingchain, \\
    & \boundingchain \in \Chains_{n+1}(K; G).
   \end{split}
   \label{eq:homologous}
\end{align}
This formulation considers all cycle representatives homologous to $\originalrep$, i.e. that differ by a boundary, and selects the optimal representative $\optimalrep$ which minimizes $\loss$.
\pr \eqref{eq:homologous} is correct because the coset $\hclass$ can be expressed in the form
    \begin{align*}
    \hclass
    =
    \originalrep + \Boundaries_n(K; G) 
    =
    \{ \originalrep + \partial_{n+1} \boundingchain \mid \boundingchain \in \Chains_{n+1}(K; G) \}
    \end{align*}  
 
In practice, a cycle representative $\originalrep$ is almost always provided together with the initial problem data (which consists of $K$, $G$, $\loss$, and $\hclass$), so the central challenge lies with solving \pr \eqref{eq:homologous}.

Several variants of \pr \eqref{eq:homologous} have been studied, especially where $\loss(\optimalrep) = ||\optimalrep||_0$ or $\loss(\optimalrep) = ||\optimalrep||_1$.  For a survey of results when $G = \field_2$, see \cite{chenhardness}.  For a discussion of results when $G = \Z$, see \cite{dey2011optimal}.  Broadly  speaking, minimizing against $\ell_0$  tends to be hard, even when $K$ has attractive properties such as embeddability in a low-dimensional Euclidean space \cite{borradaile2020minimum}.  Minimizing against $\ell_1$  is hard when $G = \field_2$ (since, in this case, $\ell_1 = \ell_0$),  but tractable via linear programming when $G \in \{\Q, \R\}$.

An interesting variant of the minimal cycle representative problem is the minimal \emph{persistent} cycle representative problem.  This problem was described in  \cite{chenquantifying} and may be formulated as follows:  given an interval $[a,b) \in \barcode_n(K_\bullet)$, solve 
\begin{align}
   \begin{split}
    \text{minimize } & \loss(\optimalrep) \\
    \text{ subject to } & \birth(\optimalrep) = a \\
    & \death(\optimalrep) = b \\
    & \optimalrep \in \Cycles_n(K_a; G)
   \end{split}
   \label{eq:minbarcoderep}
\end{align}
for $\optimalrep$.  An advanced treatment of this problem can be found in \cite{chenquantifying} for special case where (i)  $G = \field_2$, (ii) $\loss$ is a weighted sum of incident edges,  and (iii) the birth function assigns distinct values to any two simplices of the same dimension, and (iv) $n=1$.

\subsection{Minimal homological cycle bases}

\pr \eqref{eq:homologous} has a natural extension when $G$ is a field.  This extension focuses not on the smallest representative of a single homology class, but the smallest  \emph{homological cycle basis}.  It may be formally expressed as follows:
\begin{align}
   \begin{split}
    \text{minimize } & \textstyle \sum_{\optimalrep \in \hcyclebasis} \loss(\optimalrep) \\
    \text{ subject to } & \hcyclebasis \in \setofhcyclebases_n(K ; G)
   \end{split}
   \label{eq:generalminimalbasis}
\end{align}
where $\setofhcyclebases_n(K, G)$ is the family of dimension-$n$ homological cycle bases of $\Homologies_n(K;G)$. Thus, the program is finding a complete generating set $\hcyclebasis$ for all of the homological cycles of dimension $n$ where each element has been minimized in some sense.  

It is natural to wonder whether a solution to \pr \eqref{eq:generalminimalbasis} could be obtained by first calculating an arbitrary (possibly non-minimal) homological cycle basis $\hcyclebasis = \{\optimalrep^1, \ldots, \optimalrep^m \}$ and then selecting an optimal cycle representative $\cycle^i$ from each homology class $[\optimalrep^i]$.    Unfortunately, the resulting basis need not be optimal.  To see why, consider the simplicial complex $K_3$ shown in Figure \ref{fig:example-optimal} (E), taking $G$ to be $\Q$ and $\loss$ to be the $\ell_0$ norm.  Complex $K_{\bullet}$ has several different homological cycle bases in degree 1, including  $\hcyclebasis_0: = \{\hat \optimalrep^6, \optimalrep^6\}$, $\hcyclebasis_1: = \{\optimalrep^5,  \optimalrep^6\}$, and $\hcyclebasis_2: = \{ \optimalrep^5,  \hat \optimalrep^6 + \optimalrep^4\}$.  However, only $\hcyclebasis_0$ is $\ell_0$-minimal.  Moreover, each of the cycle representatives $\optimalrep^5, \optimalrep^6, \hat \optimalrep^6$ is already minimal within its homology class, so element-wise minimization will not transform  $\hcyclebasis_1$ or $\hcyclebasis_2$ into optimal bases, as might have been hoped.

As with the minimal cycle representative problem, the minimal homological cycle basis problem has been well-studied in the special case where $\loss$ is the $\ell_0$ norm and $G = \field_2$.  In this case, Equation \eqref{eq:generalminimalbasis} is NP-hard to approximate for $n>1$, but  $O(n^3)$ when $n=1$ \cite{dey2018efficient}. Several interesting variants and special cases have been developed in the $n=1$ case, as well \cite{shortestonedimension, erickson2005greedy, chen2010measuring}.  We are not currently aware of a systematic treatment for the case $G \in \{\Q, \R\}$.

A natural variant of the minimal homological cycle basis problem in \eq \eqref{eq:generalminimalbasis} is the minimal \emph{persistent homological cycle basis} problem  
\begin{align}
   \begin{split}
    \text{minimize } & \textstyle \sum_{\optimalrep \in \hcyclebasis} \loss(\optimalrep) \\
    \text{ subject to } & \hcyclebasis \in \setofpersistenthcyclebases_n(K_\bullet ; G)
   \end{split}
   \label{eq:persistentminimalbasis}
\end{align}
where $\setofpersistenthcyclebases_n(K_\bullet; G)$ is the set of \emph{persistent} homological cycle bases. This is a stricter condition than \pr \eqref{eq:generalminimalbasis} in that not only does it require that the elements of $\hcyclebasis$ form a generating set of all cycles of dimension $n$, but the barcode associated to $\hcyclebasis$ must match $\barcode_n(K_\bullet).$ That is, the multisets of birth/death pairs must be identical.

\pr \eqref{eq:persistentminimalbasis} is much more recent than \pr \eqref{eq:generalminimalbasis}, and consequently appears less in the literature. In the special case where every bar in the multiset $\barcode_n(K_\bullet)$ has multiplicity 1 (i.e. there are no duplicate bars), \pr \eqref{eq:persistentminimalbasis} can be solved by making one call to the minimal persistent cycle representative \pr \eqref{eq:minbarcoderep} for each bar.   In particular, the method of \cite{chenquantifying} may be applied to obtain a minimal persistent basis when the correct hypotheses are satisfied: $G = \field_2$, loss is a weighted sum of incident simplices, there are distinct birth times for all simplices of the same dimension, and $n=1$. In general, however, bars of multiplicity 2 are possible, and in this case repeated application of \pr \eqref{eq:minbarcoderep} will be insufficient.  

\subsection{Minimal filtered cycle space bases}

A close cousin of the minimal homological cycle basis \pr \eqref{eq:generalminimalbasis} is the minimal \emph{filtered cycle basis} problem, which may be formulated as  follows
\begin{align}
   \begin{split}
    \text{minimize } & \textstyle \sum_{\optimalrep \in \fcyclebasis} \loss(\optimalrep) \\
    \text{ subject to } & \fcyclebasis \in \setoffilteredcyclebases(K_\bullet ; G)
   \end{split}
   \label{eq:filteredminimalbasis}
\end{align}
where $\setoffilteredcyclebases(K_\bullet)$ is the family of all bases $\fcyclebasis$ of $\Cycles_n(K_{\epsilon_T})$ such that $\fcyclebasis$ contains a basis for each  subspace $\Cycles_n(K_{\epsilon_i})$, for $i \in \{1, \ldots, T\}$.  

Escolar and Hiraoka \cite{Escolar2016} provide a polynomial time solution via linear programming when
    \begin{enumerate}
        \item $\loss$ is the $\ell_1$ norm,
        \item $G = \Q$, and
        \item $K_\bullet$ is a simplex-wise filtration (without loss of generality, $K_\bullet = (K_1, \ldots, K_T)$).
    \end{enumerate}

Their key observation is that $\fcyclebasis$ is an optimal solution to \pr \eqref{eq:persistentminimalbasis} if and only if $\fcyclebasis$ can be expressed as a collection $\{\cycle^j : j \in J\}$ where  \begin{enumerate}
    \item the the set $J = \{j :  \Cycles_n(K_{j-1}) \subsetneq \Cycles_n(K_j) \}$ that indexes the cycles is the list of filtrations at which a novel $n$-cycle appears, and
    \item for each $j \in J$, the cycle $\cycle^j$ first appears in $K_j$ and is a minimizer for the loss function among all such cycles, i.e. $\cycle^j \in \argmin_{\cycle \in \Cycles_n(K_j) \backslash \Cycles_n(K_{j-1})} \loss(\cycle).$
\end{enumerate}
The authors formulate this problem as

\begin{align}
\begin{split}
\text{minimize } & ||\mathbf{x} ||_1  \\
\text{ subject to } & \mathbf{x} = \originalrep + \sum_{r\in R} w_r g^r + \sum_{s \in S} v_s f^s \\
& \boundingchain \in \Q^R  \\
& \mathbf{v} \in \Q^S
\end{split}
\label{eq:escolarargmin}
\end{align}
where $\originalrep \in\Cycles_N(K_j) \backslash \Cycles_N(K_{j-1})$ is a novel cycle representative at filtration $j$; $\{g^r : r \in R\}$ is a basis for $\Boundaries_n(K_{j-1})$\footnote{Because of the assumption that $K_\bullet$ is a simplex-wise filtration, if there is a new $n$-cycle in $K_j$ then there cannot also be a new $(n+1)$-simplex, so this is also a basis for $\Boundaries_n(K_j).$}; and $\{g^r : r \in R\} \cup \{f^s : s \in S\}$ is an extension of the given basis for $\Boundaries_n(K_{j-1})$ to a basis for $\Cycles_n(K_{j-1})$. That is, $\originalrep$ is a cycle that has just appeared in the filtration. To optimize it, we are allowed to consider linear combinations of both boundaries, $\{g^r\}$, and cycles, $\{f^s\}$, born before $\originalrep.$ The cycle $\optimalrep$ obtained in this way cannot have a birth time before that of $\originalrep$, but may have a different death time if $[\sum_{s\in S}v_sf^s]$ dies later than $[\originalrep]$.

The algorithm developed in \cite{Escolar2016} is cleverly constructed to extract $\originalrep$, $\{g^r : r \in R\}$, and $\{f^s : s \in S\}$ from matrices which are generated in the normal course of a barcode calculation.

\begin{figure}[]
\begin{center}
\includegraphics[width=0.9\textwidth]{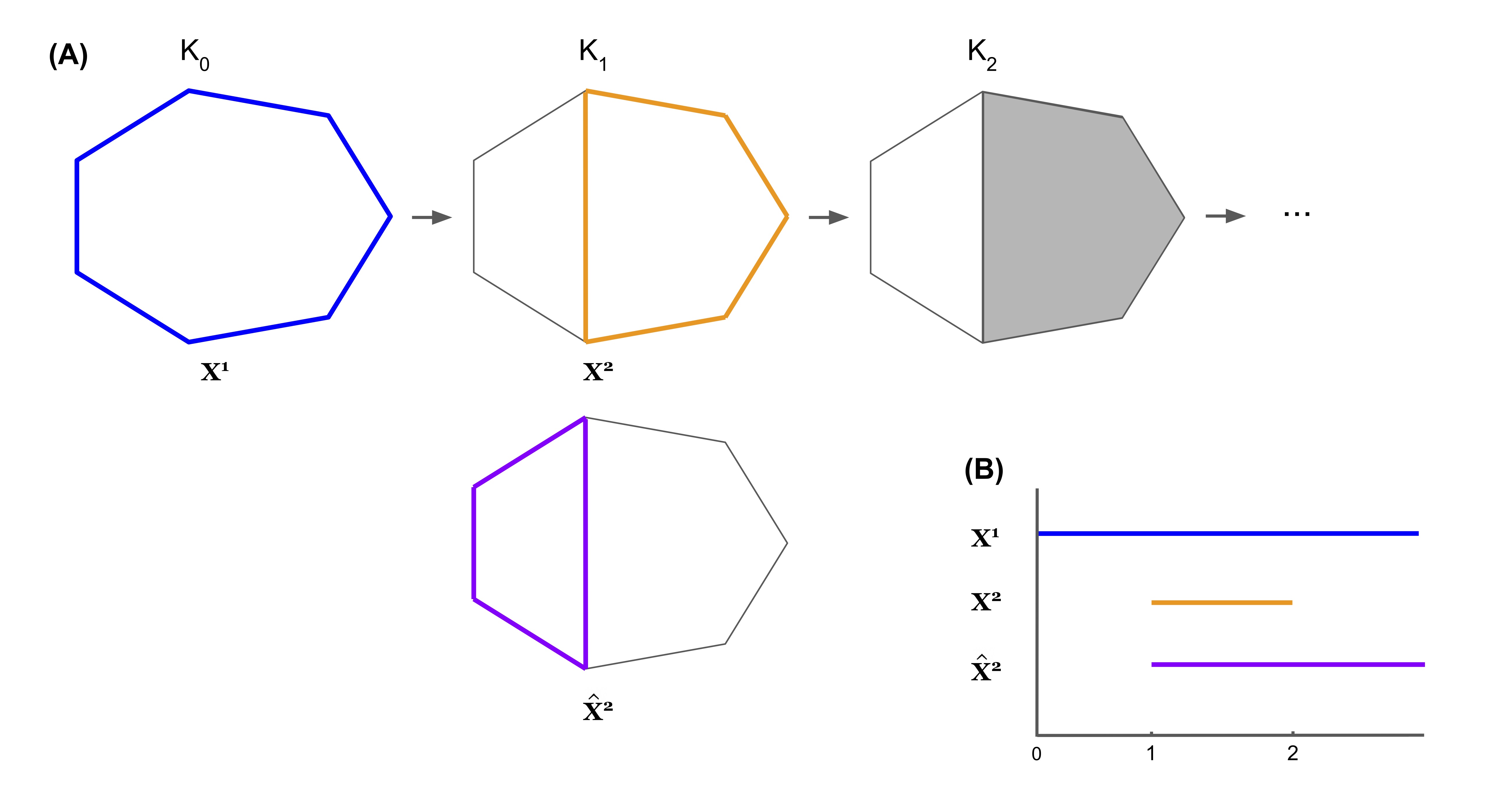}
\end{center}
\caption{An example where the optimal cycles obtained from \pr \eqref{eq:escolarargmin} do not form a persistent homological cycle basis. The thickened colored cycles in Subfigure (\textbf{A}) represent a cycle representative for the hole it encloses, and the bar with the corresponding color in Subfigure (\textbf{B}) records the lifespan of the cycle. In Subfigure (\textbf{A}), we see $\persinterval(\optimalrep^1) = [0,\infty), \persinterval(\optimalrep^2) = [1,2).$ Then, $\{\optimalrep^1, \optimalrep^2\}$ forms a basis for the persistent homological cycles. The cycle representative $\hat \optimalrep^2$ is an optimal cycle representative obtained by solving \pr (\ref{eq:filteredminimalbasis}) for the filtered simplicial complex $K_2$. However, $\persinterval(\hat \optimalrep_2) = [1, \infty)$, and thus  $\{\optimalrep^1, \hat \optimalrep^2\}$ is no longer a persistent homological cycle basis.} \label{fig:example-persBasis}
\end{figure}

\begin{remark}
\label{rmk:filteredversuspersistent}
It is important to distinguish between  $\setofpersistenthcyclebases$ and $\setoffilteredcyclebases$, hence between the optimization Programs \eqref{eq:persistentminimalbasis} and \eqref{eq:filteredminimalbasis}.  As Escolar and Hiraoka \cite{Escolar2016} point out, given $\hcyclebasis \in \setofpersistenthcyclebases$ and $\fcyclebasis \in \setoffilteredcyclebases$, one can always find an injective function $\phi: \hcyclebasis \to \fcyclebasis$ such that $\birth(\cycle) = \birth(\phi(\cycle))$ for all $\cycle$.  However, this does not imply that $\phi(\hcyclebasis) \in \setofpersistenthcyclebases$, as the deaths of each cycle may not coincide.  Indeed, the question of whether a persistent homological cycle basis can be extracted from $\fcyclebasis$ \emph{by any means} is an open question, so far as we are aware. We provide an example in \fig \ref{fig:example-persBasis} where the cycle basis obtained by optimizing each cycle using \pr \eqref{eq:filteredminimalbasis} is not a persistent homology cycle basis $\hcyclebasis$. 
\end{remark}

Though Remark \ref{rmk:filteredversuspersistent} is a bit disappointing for those interested in persistent homology, the machinery developed to study \pr \eqref{eq:filteredminimalbasis} is nevertheless interesting, and we will discuss an adaptation.

\subsection{Volume-optimal cycles: minimizing over bounding chains}\label{sec:volume}

Schweinhart \cite{schweinhart2015statistical} and  Obayashi \cite{Obayashi2018} consider a different notion of minimization: \emph{volume}\footnote{This notion of volume differs from that of \cite{chenhardness}. The latter refers to volume as the $\ell_0$ norm of a chain, while the former (which we discuss in this section) refers to the $\ell_0$ norm of a \emph{bounding} chain.} optimality.  This approach focuses on the ``size'' of a bounding chain; it is specifically designed for cycle representatives in a persistent homological cycle basis.

Obayashi \cite{Obayashi2018} formalizes the approach as follows.  First, assume a simplex-wise filtration $K_\bullet$; without loss of generality, $K_\bullet = (K_1, \ldots, K_T)$, and we may enumerate the simplices of $K_T$ such that $K_i = \{\simplex_1, \ldots, \simplex_i\}$ for all $i$.  Since each simplex has a unique birth time, each interval in  $\barcode_n(K_\bullet)= \{[b_1, d_1), \ldots, [b_N, d_N)\}$ has a unique left  endpoint.  Fix $[b_i,d_i) \in \barcode_n(K_\bullet)$ such that $d_i < \infty$ (in the case $d_i = \infty$, volume is undefined).    It can be shown that $\sigma_{b_i}$ is an $n$-simplex and  $\sigma_{d_i}$ is an $(n+1)$-simplex.

A \emph{persistent volume} $\volvec$ for $[b_i, d_i)$ is an $(n+1)$ chain $\volvec \in \Chains_{n+1}(K_{d_i})$ such that\footnote{If we regard $\partial_{n+1}\volvec$ as a function $\Simplices_{n}(K_{d_i}) \to \Q$, then $(\partial_n \volvec)_\tau$ is the value taken by $\partial_n \volvec$ on simplex $\tau$.  Alternatively, if we regard $\partial_n \volvec$ as a linear combination of $n$-simplices, then $(\partial_n \volvec)_\tau$ is the coefficient placed by $\partial_n \volvec$ on $\tau$.}
\begin{align}
    \volvec   & = \sigma_{d_i} + \sum_{\sigma_k \in \mathcal{F}_{n+1}} \alpha_k\sigma_k \label{obacond1} \\
    (\partial_{n+1} \volvec)_\tau  & = 0 \quad \forall \tau \in \mathcal{F}_n \label{obacond2}\\
    (\partial_{n+1} \volvec)_{\sigma_{b_i}}  & \ne 0, \label{obacond3}
\end{align}
where $\mathcal{F}_n = \{\sigma_k \in \Simplices_n(K) : b_i < k < d_i \}$ denotes the $n$-simplices alive in the window between the birth and death time of the interval under consideration.   

We interpret these equations as follows: Given a persistence interval $[b_i,d_i)$, condition \eqref{obacond1} implies that $\volvec$ only contains $n+1$-simplices born between $b_i$ and $d_i$ and must contain the $n+1$-simplex born at $d_i$. Condition \eqref{obacond2} ensures that the boundary of $\volvec$ contains no $n$-simplex born after $b_i$, and condition \eqref{obacond3} ensures that the boundary of $\volvec$ contains the $n$-simplex born at $b_i$. This guarantees that $\partial_{n+1}\volvec$ exists at step $b_i$, does not exist before step $b_i$, and dies at step $d_i$.

\begin{theorem}[Obayashi \cite{Obayashi2018}]  
\label{thm:obayashi}
Suppose that $[b_i, d_i) \in \barcode_n(K_\bullet)$ and  $d_i < \infty$.
    \begin{enumerate}
        \item Interval $[b_i, d_i)$ has a persistent volume.
        \item If $\volvec$ is a persistent volume for $[b_i, d_i)$ then $\persinterval(\partial_{n+1}\volvec) = [b_i, d_i)$.
        \item Suppose that $\hcyclebasis$ is an $n$-dimensional persistent homological cycle basis for $K_\bullet$, that $\originalrep \in \hcyclebasis$ is the basis vector corresponding to $[b_i, d_i)$, and that $\volvec$ is a persistent volume for $[b_i, d_i)$.  Then, $(\hcyclebasis \backslash \{\originalrep\}) \cup \{\partial_{n+1}\volvec\} $  
        is also a persistent homological cycle basis.
    \end{enumerate}
\end{theorem}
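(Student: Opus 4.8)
The plan is to carry the whole argument through the $R = DV$ decomposition of the total boundary matrix $D = \partial$, where $V$ is upper-triangular and invertible with unit diagonal and $R = DV$ is reduced. Recall the standard correspondence: $[b_i, d_i) \in \barcode_n(K_\bullet)$ exactly when column $d_i$ of $R$ is nonzero with lowest nonzero entry in row $b_i$; then $\sigma_{b_i}$ is an $n$-simplex and $\sigma_{d_i}$ an $(n+1)$-simplex, $\sigma_{b_i}$ is a positive (cycle-creating) simplex and $\sigma_{d_i}$ a negative (cycle-destroying) simplex. Because the filtration is simplex-wise, each finite death value occurs at most once in $\barcode_n(K_\bullet)$.

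For Part 1, I would set $\volvec := V[:,d_i]$, the $d_i$-th column of $V$. Since column reduction only adds columns of matching dimension, $\volvec$ is a combination of $(n+1)$-simplices with coefficient $1$ on $\sigma_{d_i}$, and $R[:,d_i] = D\volvec = \partial_{n+1}\volvec$. As $\low(R[:,d_i]) = b_i$, the chain $\partial_{n+1}\volvec$ is supported on $n$-simplices of index $\le b_i$ with a nonzero coefficient on $\sigma_{b_i}$, which is precisely \eqref{obacond2} and \eqref{obacond3}. The only step requiring real work is \eqref{obacond1}, namely that the remaining $(n+1)$-simplices of $\volvec$ have index strictly between $b_i$ and $d_i$. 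For this I would prove, by strong induction on the column index, the auxiliary claim that for every persistence pair $(\ell, j)$ the column $V[:,j]$ is supported on indices in $(\ell, j]$: during the reduction of column $j$ the pivot row strictly decreases from $\low(\partial\sigma_j)$ down to $\ell$, so each earlier column ever added to column $j$ is a death column $V[:,h]$ whose pivot row exceeds $\ell$ and which satisfies $h < j$, and applying the inductive hypothesis to each such $h$ closes the argument. Specializing to $(\ell,j) = (b_i, d_i)$ gives \eqref{obacond1}. I expect this claim --- that a persistent volume can be confined to the birth--death window --- to be the main obstacle, as it is the only step that genuinely exploits the combinatorics of the reduction (the strictly decreasing sequence of pivots); the rest is bookkeeping of births, deaths, and surviving classes.

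For Part 2, I would argue for an arbitrary persistent volume $\volvec$. It is a boundary, hence a cycle. By \eqref{obacond1}, $\partial_{n+1}\volvec$ involves only $n$-simplices of index $< d_i$; by \eqref{obacond2} it vanishes on those of index in $(b_i, d_i)$; so $\partial_{n+1}\volvec \in \Chains_n(K_{b_i})$, while \eqref{obacond3} puts it outside $\Chains_n(K_{b_i-1})$, giving $\birth(\partial_{n+1}\volvec) = b_i$. Since $\volvec \in \Chains_{n+1}(K_{d_i})$, we have $\partial_{n+1}\volvec \in \Boundaries_n(K_{d_i})$, so $\death(\partial_{n+1}\volvec) \le d_i$. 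For the reverse inequality, suppose $\partial_{n+1}\volvec = \partial_{n+1}\mathbf{w}$ with $\mathbf{w} \in \Chains_{n+1}(K_{d_i-1})$; then $\volvec - \mathbf{w}$ is an $(n+1)$-cycle of $K_{d_i}$ carrying coefficient $1$ on $\sigma_{d_i}$ and otherwise supported in $K_{d_i-1}$, so $\dim\Cycles_{n+1}(K_{d_i}) > \dim\Cycles_{n+1}(K_{d_i-1})$ --- impossible since $\sigma_{d_i}$ is negative. Hence $\persinterval(\partial_{n+1}\volvec) = [b_i, d_i)$.

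For Part 3, the simplex-wise hypothesis together with Part 2 make $\originalrep$ the unique element of $\hcyclebasis$ with lifespan $[b_i,d_i)$, and $\partial_{n+1}\volvec$ shares that lifespan, so $\hcyclebasis' := (\hcyclebasis \setminus \{\originalrep\}) \cup \{\partial_{n+1}\volvec\}$ again has lifespan multiset $\barcode_n(K_\bullet)$. It then suffices to check that the classes $\{[\cycle] : \cycle \in \hcyclebasis'_j\}$ are linearly independent in $\Homologies_n(K_j)$ for every filtration index $j$; this is immediate unless $b_i \le j < d_i$, in which case $\hcyclebasis'_j$ is obtained from the basis $\hcyclebasis_j$ by swapping $[\originalrep]$ for $[\partial_{n+1}\volvec]$, and independence is equivalent to the $[\originalrep]$-coordinate of $[\partial_{n+1}\volvec]$ in the basis $\hcyclebasis_j$ being nonzero. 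Here the key point is that the inclusion-induced map $\Homologies_n(K_{d_i-1}) \to \Homologies_n(K_{d_i})$ has one-dimensional kernel spanned by $[\originalrep]$ (since $\sigma_{d_i}$ is the negative simplex killing exactly that class), while $[\partial_{n+1}\volvec]$ lies in that kernel yet is nonzero in $\Homologies_n(K_{d_i-1})$; hence $[\partial_{n+1}\volvec] = \lambda[\originalrep]$ in $\Homologies_n(K_{d_i-1})$ with $\lambda \ne 0$. Pushing the expansion $[\partial_{n+1}\volvec] = \sum_{\cycle \in \hcyclebasis_j} c_\cycle[\cycle]$ forward along $\Homologies_n(K_j) \to \Homologies_n(K_{d_i-1})$ and matching coefficients against $\lambda[\originalrep]$ in the basis $\hcyclebasis_{d_i-1}$ (which contains $\originalrep$) forces $c_{\originalrep} = \lambda \ne 0$, completing the argument.
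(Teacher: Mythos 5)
The paper itself offers no proof of this theorem: it is imported from Obayashi's work, and the Supplementary Material invokes it (as ``Theorem 5 of \cite{Obayashi2018}'') as a black box when establishing correctness of the triangle-loss algorithm. There is therefore nothing in the paper to compare against; read on its own, your argument is correct and essentially reconstructs the standard proof. You rightly identify the one step with genuine content: the claim that the column $V[:,d_i]$ of an $R=DV$ decomposition is supported on indices in $(b_i,d_i]$, which is what upgrades ``a chain whose boundary has low $b_i$'' to a persistent volume satisfying \eqref{obacond1}. Your induction on the column index is sound, because the pivot of column $d_i$ decreases strictly during reduction and every column added along the way is a death column whose own pivot equals the then-current (hence strictly greater than $b_i$) pivot, so the inductive hypothesis confines its support to $(b_i,d_i)$. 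Parts 2 and 3 correctly lean on the positive/negative simplex dichotomy: the lower bound on the death time uses that the negative simplex $\sigma_{d_i}$ creates no $(n+1)$-cycle, and the basis-exchange step uses that the kernel of $\Homologies_n(K_{d_i-1}) \to \Homologies_n(K_{d_i})$ is one-dimensional and spanned by $[\originalrep]$, which forces the $\originalrep$-coordinate of $[\partial_{n+1}\volvec]$ in the basis $\hcyclebasis_j$ to be nonzero for every $j \in [b_i,d_i)$. The only caveat worth recording is that Part 1 as you prove it is tied to the particular $V$ produced by the standard left-to-right reduction; since Part 1 is purely an existence claim, this is all the theorem requires.
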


By Theorem \ref{thm:obayashi}, for any barcode composed of finite intervals, one can construct a persistent homological cycle basis from nothing but (boundaries of) persistent volumes!  Were we to build such a basis, it would be natural to ask for volumes that are optimal with respect to some loss function; that is, we might like to solve
\begin{align}
\begin{split}
    \text{minimize } & \loss(\volvec) \\
    \text{subject to } 
    & \eqref{obacond1}, \eqref{obacond2}, \eqref{obacond3}\\
    & \textbf{v} \in \Chains_{n+1}(K_{d_i}) 
\end{split}
\label{eq:generalminimalvolume}
\end{align}
for each barcode interval $[b_i, d_i)$.  A solution $\volvec$ to \pr \eqref{eq:generalminimalvolume} is called an \emph{optimal volume}; its boundary, $\optimalrep=\partial_{n+1}\volvec$ is called a \emph{volume-optimal cycle}.

It is interesting to contrast $\ell_0$-minimal cycle representatives for an interval\footnote{Technically, this notion is not well-defined; to be formal, we should fix a persistent homology cycle basis $\hcyclebasis$, fix a cycle representative $\cycle \in \hcyclebasis$ with lifespan interval $[b_i, d_i)$, and ask for an $\ell_0$ cycle representative in the same homology class, $[\cycle] \in \Homologies_n(K_{b_i})$, as per \pr \eqref{eq:homologous}.  However, in simple cases the intended meaning is clear.} $[b_i, d_i)$ with  $\ell_0$ \emph{volume}-optimal cycle for the same interval.  Consider, for example, \fig \ref{fig:volumeoptimal}.  For the persistence interval $[b_i,d_i)$, the cycle with minimal number of edges is $(a,b) + (b,c) + (c,d)  + (d,a)$. However, the volume-optimal cycle would be found as follows: considering $K_{d_i}$, we must find the fewest $2$-simplices whose boundary captures the persistence interval. In this case, we would have an optimal volume  $(a,b,e) + (b,c,e) + (a,d,e)$ and volume-optimal cycle $(a,b)+ (b,c) + (c,e) + (e,d)+ (d,a)$.

 \begin{figure}[h!]
\begin{center}
\includegraphics[width=1\textwidth]{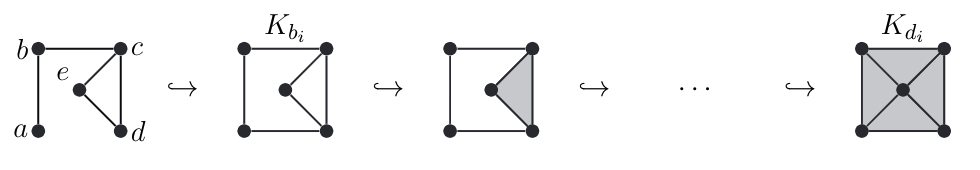}
\end{center}
\caption{A situation in which a volume-optimal cycle is different from the uniform minimal cycle. Consider the filtered simplicial complex pictured. For the persistence interval $[b_i,d_i)$, the cycle with minimal $0$-norm (fewest number of edges) is $(a,b) + (b,c) + (c,d)  + (d,a)$.
However, the volume-optimal cycle would be found as follows: considering $K_{d_i}$, we must find the fewest $2$-simplices whose boundary captures the persistence interval. In this case, we would have an optimal volume $(a,b,e) + (b,c,e) + (a,d,e)$ and volume-optimal cycle $(a,b) + (b,c) + (c,e) + (e,d) + (d,a)$. 
}\label{fig:volumeoptimal}
\end{figure}

\subsection{$\ell_0$ versus $\ell_1$-optimization} \label{secl0l1}

As mentioned above, it is common to choose $\loss(\optimalrep) = ||\optimalrep||_0$ or $\loss(\optimalrep) = ||\optimalrep||_1$.\footnote{Other choices of loss function, e.g. the $\ell_p$ norm, are common throughout mathematical optimization.  While we focus on $\ell_0$ and $\ell_1$ due to their tendency to produce sparse solutions, other choices may be better or worse suited, depending on the intended application.  For example, since $\ell_2$ loss  imposes lighter penalties on small errors and heavier penalties on large ones (as compared to $\ell_1$), it is especially sensitive to outliers; this makes it useful for tasks such as function estimation.  On the other hand, by imposing relatively heavy penalties on small errors, $\ell_1$ loss encourages sparsity \cite{dohono,NPhardL0}.} A linear program (LP) with $\ell_1$ objective function is polynomial time solvable. However, an objective function with the $\ell_0$ norm restricted to $\{0,1,-1\}$-coefficients is often preferred as the output of such a problem is highly interpretable: a cycle representative with minimal number of edges or enclosing the minimal number of triangles. Yet, $\ell_0$-optimization is known to be NP-hard \cite{NPhardL0}.

The  $\ell_1$ norm promotes sparsity and often gives a good approximation of $\ell_0$ optimization \cite{dohono,NPhardL0}, but the solution may not be exact. Yet, if all of the coefficients of the solution $\optimalrep$ are restricted to $0$ or $\pm 1$ in the optimization problem, then the $\ell_0$ and $\ell_1$ norms are identical. A looser restriction, as proposed in Escolar et al. \cite{Escolar2016}, would be to solve an optimization with $\ell_1$ objective function with integer constraints on the solution.

Requiring the solution to be integral also allows us to understand the optimal solution more intuitively than having fractional coefficients. Such an optimization problem is called a \textit{mixed integer program} (MIP), which is known to be slower than linear programming and is NP-hard \cite{Obayashi2018}. Many variants of integer programming special to optimal homologous cycles, in particular, have been shown to be hard as well \cite{borradaile2020minimum}. In \se \ref{methodsProblems}, we discuss the optimization problems we implement, where each is solved both as an LP with an $\ell_1$-norm in the objective function and an MIP by adding the constraint that $\optimalrep$ is integral. 

Dey et al. \cite{dey2011optimal} gives the \textit{totally-unimodularity} sufficient condition which guarantees that an LP and MIP give the same optimal solution. A matrix is totally unimodular if the determinant of each square submatrix is $-1, 0$, or $1$. Dey et al. \cite{dey2011optimal} give conditions for when the $\partial_{n+1}$ matrix is totally unimodular. If the totally-unimodularity condition is not satisfied, then an LP may not give the desired result. As totally unimodularity is not guaranteed for all boundary matrices \cite{henselman2014combinatorial}, we cannot rely on this condition. 

\subsection{Software implementations}
\label{sec:existingimplementations}

\emph{Edge-minimal cycles}  Software implementing the edge-loss method introduced in \cite{Escolar2016} can be found at \cite{OptiPersLP}.  This is a C++ library specialized for 3d point clouds.

\emph{Triangle-loss optimal cycles} The volume optimization technique introduced in \cite{Obayashi2018} is available through the software platform \emph{HomCloud}, available at  \cite{homcloud}.  The code can be accessed by unarchiving the HomCloud package  (for example,
\url{https://homcloud.dev/download/homcloud-3.1.0.tar.gz}) and picking the
file \url{homcloud-x.y.z/homcloud/optvol.py}.

\section{Programs and solution methods}\label{methodsProblems}
\label{sec:programsandmethods}

The present work focuses on linear  programming (LP) and mixed integer programming (MIP) optimization of $1$-dimensional persistent homology cycle representatives with $\Q$-coefficients. While the methods discussed below can be applied to any homological dimension, we limit the scope of the present work to dimension one. As described in Section \ref{problem formulation}, we follow two general approaches: those that measure loss as a function of $n$-simplices, and those that measure loss as a function of $n+1$-simplices.  Motivated by the $n=1$ case, we refer to the former as \emph{edge-loss} methods and the latter as \emph{triangle-loss} methods.  
For our empirical analysis, four variations (corresponding to two binary parameters) are chosen from each approach, yielding a total of 8 distinct optimization problems.

Concerning implementation, we find that triangle-loss methods (namely, \cite{Obayashi2018}) can be applied essentially as discussed in that paper.  The greatest challenge to implementing this approach is the assumption of an underlying simplex-wise filtration. This necessitates parameter choices and preprocessing steps not included in the optimization itself; we discuss how to execute these steps below.  
 
Implementation of edge-loss methods is slightly more complex.  For binary coefficients ($G = \field_2$) a variety of combinatorial techniques have been implemented in dimension 1 \cite{chenquantifying, zhang2019heuristic}.  Escolar and Hiraoka \cite{Escolar2016} provide an approach for $\Q$-coefficients, but in general this may not yield a persistent homology cycle basis, see Remark \ref{rmk:filteredversuspersistent}.   
In addition to the triangle-loss method mentioned in \se \ref{sec:volume}, Obayashi \cite{Obayashi2018} introduces a modified form of this edge-loss method which \emph{does} guarantee a persistent homology basis, but assumes a simplex-wise filtration.  We show that this approach can be modified to remove the simplex-wise filtered constraint.

Neither of the approaches presented here is guaranteed to solve the minimal persistent homology cycle basis problem, \eq \eqref{eq:persistentminimalbasis}.  In the case of triangle-loss methods, this is due to the (arbitrary) choice of a total order on simplices.  In the case of edge-loss methods, it is due to the choice of an initial persistent homology cycle basis.  

In the remainder of this section, we present the 8 programs studied, including any modifications from existing work.

\subsection{Structural parameters}
\label{sec_structuralparams}

Each program addressed in our empirical study may be expressed in the following form
 
\begin{align}
\begin{split}
    \text{minimize } & || W \optimalrep||_1 = \sum_i w_{i,i} (\optimalrep^+ + \optimalrep^-) \\
    \text{subject to } 
    & \optimalrep = \optimalrep^+ - \optimalrep^- \\
    & \optimalrep^+, \optimalrep^- \ge 0 \\
    & \optimalrep \in \feasibleset
\end{split}
\label{eq:generalformofourprgorams}
\end{align}
where $\feasibleset$ is a space of feasible solutions and $W = (w_{i,j})$ is a diagonal matrix with nonnegative entries.  These programs vary along 3 parameters:
    \begin{enumerate}
        \item \emph{Chain dimension of $x$}.  If $\feasibleset$ is a family of 1-chains, then we say that \eqref{eq:generalformofourprgorams} is an \emph{edge-loss} program.  If $\feasibleset$ is a family of 2-chains, we say that \eqref{eq:generalformofourprgorams} is a \emph{triangle-loss} program.
        
        \item \emph{Integrality}  The program is \emph{integral} if each $x \in \feasibleset$ has integer coefficients; otherwise we call the problem \emph{non-integral}.
        
        \item \emph{Weighting}  For each loss type (edge vs.\ triangle) we consider two possible values for $W$: identity and non-identity.  In the identity case, all edges (or triangles) are weighted equally; we call this a \emph{uniform}-weighted problem.  In the non-identity case we weigh each entry according to some measurement of ``size'' of the underlying simplex (\emph{length}, in the case of edges, and \emph{area}, in the case of triangles).\footnote{These notions make sense due to our use of coefficient field $\Q$. The distance used to form a simplicial complex can be used to define length. We restrict our attention of area to points in Euclidean space.}  There is precedent for such weighting schemes in existing literature \cite{dey2011optimal, chenquantifying}.
    \end{enumerate}

Edge-loss and triangle-loss programs will be denoted $\Edge$ and $\Tri$, respectively.  Integrality will be indicated by a superscript $I$ (integer) or $NI$ (non-integer).  Uniform weighting will be denoted by a subscript $Unif$ (uniform); non-uniform weighting will be indicated by subscript $Len$ (for edge-loss programs) or $Area$ (for triangle-loss programs).  Thus, for example, $\Edge^{I}_{Len}$ denotes a length-weighted edge-loss program with integer constraints.

\subsection{Edge-loss methods}
\label{sec:edgelossmethods}

Our approach to edge-loss minimization, based on work by Escolar and Hiraoka \cite{Escolar2016}, is summarized in Algorithm \ref{alg:edge}.  As in \cite{Escolar2016}, we obtain $\optimalrep$  by taking a linear combination of $\originalrep$ with not only boundaries but \emph{cycles} as well; consequently $\optimalrep$ need not be homologous to $\originalrep$.  
\begin{algorithm}
\caption{Edge-loss persistent cycle minimization}
\label{alg:edge}
\begin{algorithmic}[1]
\STATE Compute a persistent homology basis $\hcyclebasis$ for homology in dimension 1, with coefficients in $\Q$,  using the standard matrix decomposition procedure described in the Supplementary Material. Arrange the elements of $\hcyclebasis$ into an ordered sequence $\obasis^0 = (\obasisel^{0,1}, \ldots, \obasisel^{0,m})$.
\FOR{$j = 0, \ldots, m-1$}
\STATE Solve Program \eqref{eq:edgelossgeneral} to optimize the $j+1$th element of $\obasis^{j}$.  Let $\optimalrep$ denote the solution to this problem, and define $\obasis^{j+1}$ by replacing the $j+1$th element of $\obasis^{j}$ with $\optimalrep$.  Concretely, $\obasisel^{j+1,j+1} = \optimalrep$, and $\obasisel^{j+1,k} = \obasisel^{j,k}$ for $k \neq j$.
\ENDFOR
\STATE Return $\hcyclebasis^*: = \{\obasisel^{m,1}, \ldots, \obasisel^{m,m}\}$, the set of elements in $\obasis^m$.
\end{algorithmic}
\end{algorithm}

Our pipeline differs from \cite{Escolar2016} in three respects.  First, we perform all optimizations \emph{after} the persistence calculation has run.   On the one hand, this means that our persistence calculations  fail to  benefit from the memory advantages offered by optimized cycles; on the other hand, separating the calculations allows one to ``mix and match'' one's favorite persistence solver with one's favorite linear solver, and we anticipate that this will be increasingly important as new, more efficient solvers of each kind are developed.  Second, we introduce additional constraints which guarantee that $\hcyclebasis^* \in \setofpersistenthcyclebases$  
(and, moreover, $\persinterval( \optimalrep) = \persinterval( \originalrep)$ for each $\originalrep \in \hcyclebasis$). Third, we remove the hypothesis of a simplex-wise filtration; this requires some technical modifications, whose motivation is explained in the Supplementary Material. The crux of this modification lies with the for loop, which replaces cycles that have been optimized in the cycle basis for later cycle optimization.

Program \eqref{eq:edgelossgeneral} optimizes the $j$th element of an ordered sequence of cycle representatives $\obasis = (\obasisel^1, \ldots, \obasisel^m)$.  In particular, it seeks to minimize $\originalrep := \cycle^j$.  To define this program, we first construct a matrix $A$ such that $A[:, i] = \cycle^i$ for $i = 1, \ldots, m$.  We then define  three index sets, $\goodcycleindices, \goodtriangles, \goodedges$ such that 
    \begin{align*}
        \goodcycleindices &= \{ i :  \birth(\cycle^i) \le \birth(\originalrep), \;  \death(\cycle^i) \le \death(\originalrep), \; i \neq j \} \\
        \goodtriangles &= \{\sigma \in \Simplices_{n+1}(K) : \birth(\sigma) \le \birth(\originalrep)\} 
        \\
        \goodedges &= \{\sigma \in \Simplices_n(K) : \birth(\sigma) \le \birth(\originalrep)\}
    \end{align*} 
That is, $\goodcycleindices$ indexes the set of cycles $\cycle^i$ such that $\cycle^i$ is born  (respectively, dies) by the time that $\cycle^j$ is born (respectively, dies),  excluding the original cycle $\cycle^j$ itself. Set $\goodtriangles$ is the family of triangles born by $\birth(\originalrep)$, and set $\goodedges$ is the family of edges born by $\birth(\originalrep)$. 

With these definitions in place, we now formalize the general edge-loss problem as Program \eqref{eq:edgelossgeneral}, where  $\partial_{n+1}[\goodedges,\goodtriangles]$ denotes the  submatrix of $\partial_{n+1}$ indexed by triangles born by $\birth(\originalrep)$ (along columns) and edges indexed by edges born by $\birth(\originalrep)$.  Likewise $A[ \goodedges ,\goodcycleindices]$ is the column submatrix of $A$ corresponding to cycles that are born  before the birth time of $\originalrep$ (and which die before the death time of $\originalrep$), excluding $\originalrep$ itself.

\begin{align}
\begin{split}
    \text{minimize   } & ||W \optimalrep ||_1 = \sum_{i=1}^N  (x^+_i + x_i^-)\\
   \text{subject to  } &  
      (\optimalrep^+ - \optimalrep^- )= \originalrep[\goodedges] +   \partial_{n+1}[\goodedges, \goodtriangles]  \q + A[\goodedges, \goodcycleindices] \p \\
      & \p \in \Q^{\goodcycleindices} \\
      & \q \in \Q^{\goodtriangles} \\      
      & \optimalrep \in G^{\goodedges } \\      
      & \optimalrep^+, \optimalrep^- \geq 0 
      \end{split}
      \label{eq:edgelossgeneral}
\end{align}

 Recall from \se \eqref{sec_structuralparams} that this program varies along two parameters (integrality and weighting).  In \emph{integral} programs $G = \Z$, whereas in \emph{nonintegral} programs $G = \Q$.  The weight matrix $W$ is always diagonal, but in \emph{uniform}-weighted programs $W[i,i] = 1$ for all $i$, whereas in \emph{length}-weighted programs $W[i,i]$ is the length of edge $i$.  Program \ref{eq:edgelossgeneral} thus results in four variants:
  
\begin{itemize}[leftmargin=1in]
    \item[\namedlabel{itm:edge_NIU}{$\Edge\NI\unif$}] Nonintegral edge-loss with uniform weights.
    \item[\namedlabel{itm:edge_IU}{$\Edge\I\unif$}] Integral edge-loss with uniform weights.
    \item[\namedlabel{itm:edge_NIL}{$\Edge\NI\len$}] Nonintegral edge-loss with edges weighted by length. 
    \item[\namedlabel{itm:edge_IL}{$\Edge\I\len$}] Integral edge-loss with edges weighted by length. 
\end{itemize}
  
Program \eqref{eq:edgelossgeneral} may have many more variables than needed, because $\partial_{n+1}$ is often highly singular.  Indeed, in  applications, $\partial_{n+1}$ can have hundreds or thousands of times as many columns as rows!

A simple means to reduce the size of Program \eqref{eq:edgelossgeneral}, therefore, is to replace $\goodtriangles$ with a subset $\hat \goodtriangles \subseteq \goodtriangles$ such that $\partial_{n+1}[\goodedges, \hat \goodtriangles]$ is a column basis for $\partial_{n+1}[\goodedges, \goodtriangles]$.  Replacing $\goodtriangles$ with $\hat \goodtriangles$ will not change the space of feasible values for $\optimalrep$ in Program \eqref{eq:edgelossgeneral}, but it can cut the number of decision variables significantly. In particular, one may take $\hat \goodtriangles : = \{ \simplex : R[:, \simplex] \neq 0\}$ in the $R = \partial_{n+1} V$ decomposition of $\partial_{n+1}$ described in the Supplementary Material.  We also show correctness of this choice of $\hat \goodtriangles$ there.

\subsection{Triangle-loss methods}
\label{sec:trianglelossmethdos}

Our approach to triangle-loss optimization is essentially that of Obayashi \cite{Obayashi2018}, plus a preprocessing step that converts more general problem data into the simplex-wise filtration format assumed in \cite{Obayashi2018}.  There are several noteworthy methods for time and memory performance enhancement developed in \cite{Obayashi2018}  which we do not implement (e.g.  using restricted neighborhoods $\mathcal{F}_q^{(r)}$ to reduce problem size), but which may substantially improve runtime and memory performance.
 
The original method makes the critical assumption that $K_\bullet$ is a simplex-wise filtration, more precisely, that there exists a linear order $\sigma_1 \le \cdots \le \sigma_{|K|}$ such that $K_i = \{\simplex_1, \ldots, \simplex_i\}$. This hypothesis allows one to map each finite-length interval $[i,j) \in \barcode_n(K_\bullet)$ to a unique pair of simplices $(\simplex_i, \simplex_j)$, called a \emph{birth/death pair}, where  $\sigma_i \in \Simplices_n(K)$ and $\sigma_j \in \Simplices_{n+1}(K)$.    This mapping makes it possible to formulate Program \eqref{eq:generalminimalvolume}. Unlike the general edge-loss Program \eqref{eq:generalformofourprgorams}, one can formulate Program \eqref{eq:generalminimalvolume} without ever needing to choose an initial (non-optimal) cycle.  Thus, for simplex-wise filtrations, the method of \cite{Obayashi2018} has the substantial advantage of being ``parameter free.''

However, in many applied settings the filtration $K_\bullet$ is not simplex-wise.   Indeed, even accessing information about the filtration can be difficult in modern workflows.  Such is the case, for example, for the filtered Vietroris-Rips (VR) construction. In many VR applications, the user  presents raw data in the form of a point cloud or distance matrix to a ``black box'' solver; the solver returns the barcode without ever exposing information about the filtered complex to the user. Thus, the problem of mapping intervals back to pairs of simplices has practical challenges in common applied settings.

To accommodate this more general form of problem data, we employ Algorithm \ref{alg:rdvvolumeoptimization}.  This procedure works by (implicitly) defining a simplex-wise refinement $K_\bullet'$ of $K_\bullet$, applying the method of \cite{Obayashi2018} to this refinement, then extracting a persistent homology cycle basis for the subspace of finite intervals from the resulting data.
 More details, including recovery of a complete persistent homology cycle basis with infinite intervals\footnote{Recall volume is undefined for infinite intervals.}, and a proof of correctness can be found in the Supplementary Material.

\begin{algorithm}
\caption{Triangle-loss persistent cycle minimization}
\label{alg:rdvvolumeoptimization}
\begin{algorithmic}[1]
\STATE Place a filtration-preserving linear order $\le\dimss{l}$ on $\Simplices_l(K)$ for each $l$.
\STATE Compute an $R = \partial_{n+1} V$ decomposition as described in \cite{cohen2006vines} and the Supplementary Material.  We then obtain a set $\Gamma$ 
 of birth/death pairs $(\sigma, \tau)$.
 \STATE For each $(\sigma, \tau) \in \Gamma$ such that $\birth(\sigma) < \birth(\tau)$,  put 
    \begin{align*}
        \mathcal{F}_n &:= \{\sigma' \in \Simplices_n(K) : \birth(\sigma') \le \birth(\tau), \; \sigma \lneq^{(n)} \sigma'\} 
        \\
        \mathcal{F}_{n+1} &: = \{ \tau' \in \Simplices_{n+1}(K) : \birth(\sigma) \le \birth(\tau'), \; \tau' \lneq^{(n+1)} \tau \} 
    \end{align*}
    and ${\hat {\mathcal{F}}}_{n+1}:= \mathcal{F}_{n+1} \cup \{\tau\}$.  Compute a  solution to the corresponding Program \eqref{eq:trianglelossgeneral}, and denote this solution by  $\optimalrep^{\sigma, \tau}$. 
    \STATE Put   
        $
            \hat \deathbasis: = \{ \partial_{n+1} (\optimalrep^{\sigma, \tau}) : (\sigma, \tau ) \in  \Gamma \; \text{ and } \; \birth(\sigma) < \birth(\tau)\}$ 
            and let $\hat \deathbasis' := \{ \cycle \in \calm : \death(\cycle) = \infty  \}$, where $\calm$ is a persistent homology cycle basis calculated by the standard $R=DV$ method.
    \STATE Return $\deathbasis: = \hat \deathbasis \cup \hat \deathbasis'.$
\end{algorithmic}
\end{algorithm}

A key component of Algorithm \ref{alg:rdvvolumeoptimization} is Program \eqref{eq:trianglelossgeneral}, which we refer to as the \emph{triangle-loss program}.
\begin{align}
\begin{split}
 \text{minimize } & ||W \mathbf{v} ||_1 = \sum_{i=1}^N (v_i^+ + v_i^-)  \\
\text{subject to } &  \partial_{n+1}[ \sigma , \hat {\mathcal{F}}_{n+1} ] \volvec \neq 0     \\
&  \partial_{n+1}[\mathcal{F}_n, \hat {\mathcal{F}}_{n+1} ] \volvec = 0 \\
 & \volvec_{\tau} = 1\\
     & \mathbf{v}^+, \mathbf{v}^- \ge 0 \\
& \mathbf{v}^+, \mathbf{v}^- \in G^{ \hat {\mathcal{F}}_{n+1}}
\end{split}
\label{eq:trianglelossgeneral}
\end{align} 
This terminology is motivated by the special case $n=1$, which is our focus for  empirical studies.  As with the general edge-loss program, Program \eqref{eq:trianglelossgeneral} varies along two  parameters (integrality and weighting).  In \emph{integral} programs $G = \Z$, whereas in \emph{nonintegral} programs $G = \Q$.  The weight matrix $W$ is always diagonal, but in \emph{uniform}-weighted programs $W[\upsilon, \upsilon] = 1$ for all $\upsilon$, whereas in \emph{area}-weighted programs $W[\upsilon, \upsilon]$ is the area of triangle $\upsilon$.\footnote{We compute the area of a $2$-simplex using Heron's Formula.  We calculate area only for VR complexes whose vertices are points in Euclidean space, though more general metrics could also be considered.} Program \eqref{eq:trianglelossgeneral} thus results in four variants:

\begin{center}
\begin{itemize}[leftmargin=1in]
     \item[\namedlabel{itm:tri_NIU}{$ \Tri\NI\unif$}] Nonintegral triangle-loss with uniform weights.
     \item[\namedlabel{itm:tri_IU}{ $\Tri\I\unif$}] Integral triangle-loss with uniform weights.
    \item[\namedlabel{itm:tri_NIA}{ $\Tri\NI\area$}] Nonintegral triangle-loss with edges weighted by area. 
      \item[\namedlabel{itm:tri_IA}{ $\Tri\I\area$}] Integral triangle-loss with edges weighted by area. 
\end{itemize}
 \end{center}

\begin{remark} Algorithm \ref{alg:rdvvolumeoptimization} offers an effective means to apply the methods of \cite{Obayashi2018} to some of the most common data sets in TDA.  However, this is done at the cost of parameter-dependence; in particular, outputs depend on the choice of linear orders $\le\dimss{l}$.  
 A brief discussion on how the choice of a total order $\le$ in Algorithm \ref{alg:rdvvolumeoptimization} may impact the difficulty of the linear programs one must solve is discussed in the Supplementary Material.  In particular, we explain why the total order implicitly chosen in Algorithm \ref{alg:rdvvolumeoptimization} is reasonable,  from a computational/performance standpoint.
\end{remark}

\subsection{Acceleration techniques} \label{acceleratation technique}
 
We consider acceleration techniques to reduce the computational costs of Programs \eqref{eq:edgelossgeneral} and \eqref{eq:trianglelossgeneral}.
\vspace{0.1in}
\noindent \emph{Edge-loss methods} 
\vspace{0.1in}
The technique used for edge-loss problems aims to reduce the number of decision variables in Program \eqref{eq:edgelossgeneral}.  It does so by replacing a (large) set of decision variables indexed by $\goodtriangles$ with a much smaller set, $\hat \goodtriangles$.  See \se \ref{sec:edgelossmethods} for details.
\vspace{0.1in}
\noindent\emph{Triangle-loss methods}  
\vspace{0.1in}
When $\partial_n$ is large, the memory and computation time needed to construct the constraint matrix $\partial_{n+1}[\mathcal{F}_n, \hat {\mathcal{F}}_{n+1} ]$ can be nontrivial.  In applications that require an optimal representative for every interval in the barcode, these costs can be incurred for hundreds or even thousands of programs. We consider two ways to generate the constraint matrices $\partial_{n+1}[\mathcal{F}_n, \hat {\mathcal{F}}_{n+1} ]$ for each of the intervals in a barcode: build $\partial_{n+1}[\mathcal{F}_n, \hat {\mathcal{F}}_{n+1} ]$ from scratch for each program, or build the complete boundary matrix $\partial_{n+1}$ in advance; rather than recompute block submatrices for each program, we pass a slice of the complete matrix stored in memory.  

The difference between these two techniques can be seen as a speed/memory tradeoff.  As we will see in \se \ref{accelerateresults}, the first approach is generally faster to optimize the entire basis of homology cycle representatives, but when the data set is large, the full boundary matrix $\partial_{n+1}$ may be too large to store in memory.

\section{Experiments}\label{methods}

In order to address the questions raised in \se \ref{intro}, we conduct an empirical study of minimal homological cycle representatives in dimension one --- as defined by the optimization problems detailed in Section \ref{sec:programsandmethods} --- on a collection of point clouds, which includes both real world data sets and point samples drawn from four common probability distributions of varying dimension.  
\newcommand{\sample}{\mathbf{S}}

\subsection{Real-world data sets} \label{sec: realworlddata}

We consider $11$ real world data sets from \cite{roadmap2017}, a widely used reference for benchmark statistics concerning persistent homology computations. There are $13$ data sets considered by \cite{roadmap2017}, however, one of them (gray-scale image) is not available, and one of them is a randomly generated data set 
similar to our own synthetic data. We summarize information about the dimension, number of points, persistence computation time of each point cloud in Table \ref{tab:realworldata}. Below we provide brief descriptions of each data set, but we refer the interested reader to \cite{roadmap2017} for further details.\footnote{We use the distance matrices found on the associated github page \cite{roadmapgithub}, except in two cases. For the \textbf{Vicsek} data, we use a distance to account for the intended periodic boundary conditions of the model, and for the \textbf{genome} data, we use Euclidean distance as the distance matrix in \cite{roadmapgithub} resulted in an integer overflow error.}

\begin{enumerate}
    \item Vicsek biological aggregation model. The Vicsek model is a dynamical system describing the motion of particles.  
    It was first introduced in \cite{vicsek} and was analyzed using PH in \cite{TZH15}. We consider a snapshot in time of a single realization of the model with each point specified by its $(x,y)$ position and heading. To compute distances, the positions and headings are scaled to be between 0 and 1, and then distance is calculated on the unit cube with periodic boundary conditions. The distance between $a$ and $b$ is computed as $\min \{d(a,q) : q-b \in \{0,1,-1\}^3 \}$. We denote this data by \textbf{Vicsek}.  
    \item Fractal networks. These networks are self-similar and are used to explore the connection patterns of the cerebral cortex \cite{fractr}. The distances between nodes in this data set are defined uniformly at random by \cite{roadmap2017}. In another data set, the authors of \cite{roadmap2017} define distances between nodes by using linear weight-degree correlations. We consider both data sets and found the results to be similar. Therefore, we opt to use the one with distances defined uniformly at random.  
    We denote this data set by \textbf{fract r}.  
    \item C.elegans neuronal network. This is an undirected network in which each node is a neuron, and edges represent synapses. It was studied using PH in \cite{celegans}. Each nonzero edge weight is converted to a distance equal to its inverse by \cite{roadmap2017}.  
    We denote this data by \textbf{eleg}. 
    \item Genomic sequences of the HIV virus. This data set is constructed by taking $1,088$ different genomic sequences of dimension $673$. The aligned sequences were studied using PH in \cite{hiv} with sequences retrieved from \cite{HIVdata}. Distances are defined using the Hamming distance, which is equal to the number of entries that are different between two genomic sequences. We denote this data by \textbf{HIV}. 
    \item Genomic sequences of H3N2. This data set contains $1,173$ genomic sequences of H3N2 influenza in dimension $2,722$. Distances are defined using the Hamming distance. We denote this data set as \textbf{H3N2}.   
    \item Human genome. This is a network representing a sample of the human genome studied using PH in \cite{celegans}, which was created using data retrieved from \cite{genome}. 
    Distances are measured using Euclidean distance. We denote this data set by \textbf{genome}.
    \item US Congress roll-call voting networks. In the two networks below, each node represents a legislator, and the edge weight is a number in $[0,1]$ representing the similarity of the two legislators' past voting decisions. Distance between two nodes $i,j$ are defined to be $1-w_{i,j}$. 
    \begin{enumerate}
        \item \textbf{House}. This is a weighted network of the House of Representatives from the 104th United States Congress.  
        \item \textbf{Senate}. This is a weighted network of the Senate from the 104th United States Congress.
    \end{enumerate}
    \item Network of network scientists. This data set represents the largest connected component of a collaboration network of network scientists \cite{newman2006finding}. The edge weights indicate the number of joint papers between two authors. Distances are defined as the inverse of edge weight. 
    We denote this data set by \textbf{network}. 
    \item Klein. The Klein bottle is a non-orientable surface with one side. This data set was created in \cite{roadmap2017} by linearly sampling $400$ points from the Klein bottle using its  `figure-8' immersion in $\mathbb{R}^3$. This data set originally contains $39$ duplicate points, which we remove. Distances are measured using the Euclidean distance. 
    We denote this data set by \textbf{Klein}. 
    \item Stanford Dragon graphic. This data set contains $1000$ points sampled uniformly  at random by \cite{roadmap2017} from $3$-dimensional scans of the dragon \cite{drag}. Distances are measured using the Euclidean distance. We denote this data set \textbf{drag}. 
\end{enumerate}

\begin{table}[!h]
\caption{Summary of the experimental results of the data sets from \cite{roadmap2017} as described in \se \ref{sec: realworlddata}. The rows include the ambient dimension, number of points, the number of cycle representatives in $\Homologies_1$, and the time (measured in seconds) it took to compute persistent homology for each data set. We also include the computation time taken to optimize the set of cycle representatives under six different optimization problems, and computation time of two different implementation choices for the triangle-loss optimal cycles: building the full $\partial_2$ boundary matrix once and extracting the part needed, or constructing part of the $\partial_2$ boundary matrix for each cycle representative. In this table, $T$ stands for computation time measured in seconds with subscripts indicating the type of the optimal cycle and superscripts indicating whether the program was solved using linear programming (NI) or integer programming (I). The time taken to construct the input to the optimization problem is included in the optimization time for edge-loss minimal cycle representatives, but is excluded and separately listed in the last two rows for the triangle-loss minimal cycle representatives. For triangle-loss cycles, we were able to compute $115$ out of the $117$ cycle representatives for the \textbf{Genome} data set and $52$ out of $57$ cycle representatives for the \textbf{H3N2} data set due to memory constraints. The numbers in the parenthesis represent the other optimization statistics corresponding to the triangle-loss optimal cycles we were actually able to compute. The last two rows compare two ways of building the input $\partial_2[:,\hat {\mathcal{F}}_{2}]$ matrix to the triangle-loss optimal cycle program. The penultimate row records the time of building the entire $\partial_{2}$ matrix once and then extracting columns born in the interval $\closedinterval$ for each representative. The last row records the total time to iteratively build the part of the boundary matrix $\partial_2[:,\hat {\mathcal{F}}_{2}]$ for each cycle representative.}
    {\footnotesize{ 
    \begin{tabular}{ |>{\centering}m{8em} *{11}{>{\centering\arraybackslash}m{4em} }|}
 \hline
  & \textbf{Klein} & \textbf{Vicsek}  & \textbf{C.elegans} & \textbf{HIV} & \textbf{genome} & \textbf{fractal R} & \textbf{network} & \textbf{house} & \textbf{senate} & \textbf{drag} & \textbf{H3N2}\\[0.5ex] 
 \hline 
 \hline
 \textbf{Ambient dimension} & 3 & 3 & 202 &  673  & 688 &  259 & 300 & 261 & 60 & 3 &  1,173\\   
 \textbf{\# points}   & 400 &  300  &  297 &   1088 &  1397    & 512 & 379 & 445  & 103 & 1,000 & 2,722\\ 
 \textbf{\# representatives}& 257  & 149  &  107 &174  &  117 (115)   & 438 & 7  & 126  & 12 & 311 & 28 (26) \\  
 \textbf{$T_\textbf{persistence}$}   & 100.97  & 129.39 & 5.14 &728.51  & 967.61  & 143.07 & 12.18  &  9.62 & 0.10 & 1,053.53 & 71,081.77  \\ 
 [0.5ex] 
\hline
\multicolumn{7}{c}{\textbf{\qquad Edge-loss persistent homological cycle representatives (\pr \eqref{eq:edgelossgeneral})}}  &&&&& \\
\hline
 \textbf{$T\I\EL$  } & 16.01  & 8.20 &  19.64&466.85 & 656.05 &   150.46 & 0.17 & 63.93  & 0.31 & 45.14 & 4,732.59	\\ 
 \textbf{$T\NI\EL$  } & 11.28  & 6.61 &16.07  &403.63 & 491.69  &    86.95 & 0.13 & 48.65  & 0.22 & 34.73 & 4,540.55 \\ 
 \textbf{$T\I\EU$  } & 14.59  & 9.09 & 19.22 & 473.82 & 689.51&    119.94 & 0.23 &  63.34 & 0.33 & 45.51 & 4,714.90	\\ 
  \textbf{$T\NI\EU$  } & 11.38  & 5.55 & 15.63 & 404.95& 492.66 & 83.40 & 0.12 & 48.88  & 0.22 & 33.88 & 4,547.37 \\
  [0.5ex] 
\hline
\multicolumn{7}{c}{\textbf{Edge-loss filtered homological cycle represnetatives (\pr \eqref{eq:escolarargmin})}} &&&&& \\
\hline
 \textbf{$T\I\EL$  } & 16.93  &8.64  &20.41  & 468.22 & 1144.17 &    155.08&  0.17 &  62.20  & 0.30 & 67.77 & 2,999.24	\\ 
 \textbf{$T\NI\EL$  } & 10.29  & 5.51 &16.15  & 403.74& 973.15 &     88.66 &0.13  &  48.24 & 0.22 & 50.25 & 2,829.12	\\ 
 \textbf{$T\I\EU$  } & 15.14  & 8.32 &19.76  & 476.84 & 1191.44 &     142.4&  0.24  & 61.82  & 0.31 & 68.63 & 2,937.16	\\ 
  \textbf{$T\NI\EU$  } & 11.07  & 5.63 & 16.23 & 406.97& 981.72  & 87.59 &  0.12 & 48.11  &0.22  & 54.05 & 2,833.06	 \\
  [0.5ex] 
\hline
\multicolumn{7}{c}{\textbf{Triangle-loss persistent homological cycle representatives (\pr \eqref{eq:trianglelossgeneral})\qquad}} &&&&& \\
\hline
 \textbf{$T\I\TU$  } & 316.33 & 24.52 & 657.53  & 25,402.56  & 16,379.86  &  20,440.33   & 2.91 & 234.05  & 0.29  & 384.91 & 39,140.67 \\ 
  \textbf{$T\NI\TU$  } & 154.36 & 19.18 & 540.06 & 23,260.12  & 14,535.42  &   18,279.82  & 2.47 & 206.63  & 0.18  & 277.93 & 36,401.50  \\ 
 \textbf{$T$ build all}  & 2.16 & 0.32  & 4.88 & 268.57 & - & 138.46 & 0.06 &  6.23 & 0.03  & 5.94 & - \\ 
\textbf{Total $T$ build part}  & 9.18 & 3.51  & 28.47 & 1,688.10 & 415.79& 917.42 & 0.28 & 45.02  & 0.05  & 106.64 & 1,236.80 \\ \hline 
\end{tabular}
} 
}
\label{tab:realworldata}
\end{table}

\setlength{\tabcolsep}{10pt}

\renewcommand{\arraystretch}{1.5}

\subsection{Randomly generated point clouds}\label{sec: randompointclouds}
We also generate a large corpus of synthetic point clouds, each containing $100$ points in $\mathbb{R}^q$ with $q = 2,\ldots,10$, drawn from normal, exponential, gamma, and logistic distributions. We produce $10$ realizations for each distribution and dimension combination, for a total of $360$ randomly generated point clouds. We use Euclidean distance to measure similarity between points and the Vietoris- Rips filtered simplicial complex to compute persistent homology. 

\subsection{Erd\H{o}s-R\'enyi random complexes}\label{sec:erdos}

To investigate which properties of homological cycle representatives could arise as the result of the underlying geometry of the point clouds, we also consider a common non-geometric model for random complexes: Erd\H{o}s-R\'enyi random clique complexes. Here, we construct 100  symmetric dissimilarity matrices of size $100 \times 100$ by drawing entries i.i.d. from the uniform distribution on $[0,1]$ for each pair of distinct points. As these dissimilarities are fully independent, they are in particular not subject to geometric constraints like the triangle inequality. A natural filtration is placed on these dissimilarity matrices by forming filtered simplicial complex $K_\bullet = (K_{\epsilon_i})_{i \in\{ 1, \ldots, T\}}$ where $0=\epsilon_1 < \cdots < \epsilon_T=1$ to compute persistent homology.

\subsection{Computations}

For each of the data sets, we perform Algorithms \ref{alg:edge} and \ref{alg:rdvvolumeoptimization} (using Vietoris-Rips complexes with $\Q$-coefficients) to find optimal bases $\hcyclebasis^*, \deathbasis \in \setofpersistenthcyclebases$.  For comparison to the edge-loss problem in Algorithm \ref{alg:edge}, we also apply \pr \eqref{eq:escolarargmin} to each representative in the persistent homology cycle basis to find a basis $\fcyclebasis \in \setoffilteredcyclebases$.

\subsection{Hardware and software}
\label{subsec:hardwaresoftware}

We test our programs on an iMac (Retina 5K, 27-inch, 2019) with a 3.6 GHz Intel Core i9 processor and 40 GB 2667 MHz DDR4 memory.

Software for our experiments is implemented in the programming language Julia; source code is available at \cite{li_thompson}.  This code specifically implements Algorithms \ref{alg:edge}\footnote{ \pr \eqref{eq:escolarargmin} is implemented analogously.} and \ref{alg:rdvvolumeoptimization}.

Since our interest lies not only with the outputs of these algorithms but with the structure of the linear programs themselves, \cite{li_thompson} implements a standalone workflow that exposes the objects built internally within each pipeline.  This library is simple by design, and does not implement the performance-enhancing techniques  developed in \cite{Escolar2016, Obayashi2018}. Users wishing to work with optimal cycle representatives for applications may consider these approaches 
discussed in \se \ref{sec:existingimplementations}.

To implement  Algorithms \ref{alg:edge} and \ref{alg:rdvvolumeoptimization} in homological dimension one, the test library \cite{li_thompson} provides three key functions:  \emph{A novel solver for persistence with $\Q$-coefficients.} To compute cycle representatives for persistent homology with $\Q$-coefficients, we implement a new persistent homology solver adapted from  \url{Eirene}  \cite{eirenecode}.  The adapted version uses native Eirene code as a subroutine to reduce the number of columns in the top dimensional boundary matrix in a way that is guaranteed not to alter the outcome of the persistence computation \cite{eirene}.
\\
\\
\noindent\emph{Formatting of inputs to linear programs.} 

Having computed barcodes and persistent homology cycle representatives, library \cite{li_thompson}  provides built-in functionality to format the linear Programs \eqref{eq:edgelossgeneral} and \eqref{eq:trianglelossgeneral} for input to a linear solver.  This ``connecting'' step is executed in pure Julia. 
\\
\\
\noindent \emph{Wrappers for linear solvers.} \label{linear solvers} 

We use the Gurobi linear solver \cite{gurobi} and the GLPK  solver \cite{glpk}. Both solvers can optimize both LPs and MIPs. Experiments indicate that Gurobi executes much faster than GLPK on this class of problems, and thus, we use it in the bulk of our computations. Both solvers are free for academic users.

\section{Results and Discussion}\label{results}

 In this section, we investigate each of the questions raised in \se \ref{intro} with the following analyses.

\subsection{Computation time comparisons} 
\label{sec:timecomparisons}

We summarize results for Programs \ref{itm:edge_NIU}, \ref{itm:edge_IU}, \ref{itm:edge_NIL},
\ref{itm:edge_IL},
\ref{itm:tri_NIU}, and 
\ref{itm:tri_IU} in \tab \ref{tab:realworldata} for data described in \se \ref{sec: realworlddata} and \tab \ref{tab:distributiondata} for data described in \se \ref{sec: randompointclouds} and \se \ref{sec:erdos}. Further, we summarize results for Programs \ref{itm:tri_NIA} and \ref{itm:tri_NIA} in \tab \ref{tab:distributiondata} for data described in \se \ref{sec: randompointclouds}.\footnote{We compute the area of a $2$-simplex using Heron's Formula for data whose distances are measured using the Euclidean distance. For data with non-Euclidean distances, we find that there are triangles that do not obey the triangle inequality, thus, we only compute area-weighted triangle-loss cycles for data described in \se \ref{sec: randompointclouds}. As such,  \ref{itm:tri_NIA}, 
\ref{itm:tri_NIA} do not appear in \tab \ref{tab:realworldata} and the Er\H{o}s R\'enyi column of \tab \ref{tab:distributiondata}.}
We use $T_\textbf{persistence}$ to denote the time taken to compute all original cycle representatives and their lifespans $\persinterval$. We use $T_\bullet^*$ to denote the computation time for optimizing all generators found by the persistence algorithm, where the subscript denotes the cost function e.g. $\EU$ or $\TU$, and the superscript denotes the nonintegral $\NI$ or integral $\I$ constraint. 
The $T_\bullet^*$ computations include the time required to construct the inputs to the solver for the edge-loss methods, and exclude the time required to construct the inputs to the triangle-loss methods, whose computation time is separately recorded in order to compare two ways of constructing the input matrix, as discussed in \se \ref{acceleratation technique}. In each table, rows 1-3 provide information about the data by specifying ambient dimension, number of points, and number of cycle representatives. Row 4, labeled as $T_\textbf{persistence}$, gives the total time to compute persistent homology for the data, measured in seconds. Rows 5-12 (\tab \ref{tab:realworldata}) and rows 5-14 (\tab \ref{tab:distributiondata}) give the total time to optimize all cycle representatives that are feasible to compute using each optimization technique. In the last two rows of each table, we provide the time of constructing the input to the triangle-loss methods using two different approaches described in \se \ref{acceleratation technique}. The penultimate row records the time of building the entire $\partial_{2}$ matrix once and then extracting $\partial_2[\mathcal{F}_1, \hat {\mathcal{F}}_{2}]$ for each representative. The last row records the total time to iteratively build the part of the boundary matrix $\partial_{2}[ \mathcal{F}_1 , \hat {\mathcal{F}}_{2} ]$ for each cycle representative. In \tab \ref{tab:distributiondata}, the computation times displayed average all random samples from each dimension for each distribution. 
\setlength{\tabcolsep}{10pt}

\renewcommand{\arraystretch}{1.5}
\begin{table}[!h]
\caption{Summary of the experimental results for the synthetic, randomly generated data sets described in \se \ref{sec: randompointclouds}. For each distribution, we sample $10$ data sets each containing $100$ points in ambient dimensions from $2$-$10$. The computation time in this table averages the $10$ random samples for each dimension and distribution combination. The number of cycle representatives is totaled over the $90$ samples for each distribution. The rows of this table are analogous to those of \tab \ref{tab:realworldata}, excluding the penultimate row of that table, as the time comparison is only done for the large real-world data sets. } 
\footnotesize
    \centering
    \begin{tabular}{ |c || c |c |c |c | c|}
 \hline
 & \textbf{Normal} & \textbf{Gamma}  & \textbf{Logistic} & \textbf{Exponential}  & \textbf{Erd\H{o}s-R\'enyi}  \\[0.5ex] 
 \hline 
 \hline
 \textbf{Ambient dimension} & 2-10 & 2-10    & 2-10 &  2-10 & NA \\\hline  
 \textbf{\# points} &  100 &  100  &  100 &   100 & 100 \\\hline  
  \textbf{Total \# representatives} & 4,815 & 3,706  & 4,456 & 3,788 & 34,214\\ \hline
 \textbf{Average  \textbf{$T_\textbf{persistence}$} (seconds)} &   2.80  & 2.12    & 2.01 & 2.63 
 & 2.20 \\  [0.5ex] \hline
\multicolumn{4}{c}{\textbf{Edge-loss persistent homological cycle representatives (\pr \eqref{eq:edgelossgeneral})}}  \\
\hline
 \textbf{Average total $T\I\EL$  } &5.52 & 6.01 & 5.65 & 5.91 & 5.99 \\ \hline
 \textbf{Average total $T\NI\EL$  } &  4.37 & 4.55 & 4.32 & 4.47 & 4.99\\ \hline 
 \textbf{Average total $T\I\EU$  } & 5.31 & 5.97 & 5.45 & 5.90 &6.16\\ \hline
 \textbf{Average total $T\NI\EU$  } & 4.08 & 4.58 & 4.23 & 4.51 & 4.87\\ 
 [0.5ex] 
\hline
\multicolumn{4}{c}{\textbf{Edge-loss filtered homological cycle representatives (\pr \eqref{eq:escolarargmin})}}  \\
\hline
 \textbf{Average total $T\I\EL$  } &5.32 & 6.46 & 6.27 & 6.88& 7.44\\ \hline
 \textbf{Average total $T\NI\EL$  } & 4.07 & 5.05 & 4.78 &5.11 & 4.69 \\ \hline 
 \textbf{Average total $T\I\EU$  } &5.23 & 6.46 & 6.25 & 6.66& 6.25\\ \hline
 \textbf{Average total $T\NI\EU$  } & 4.17 & 4.94 & 4.61 & 5.29 & 4.64\\ 
[0.5ex] 
\hline
\multicolumn{4}{c}{\textbf{Triangle-loss persistent homological cycle representatives (\pr \eqref{eq:trianglelossgeneral})}} & \\
\hline
 \textbf{Average total $T\I\TU$  } & 6.56 & 9.91 & 7.06 & 9.68 & 4.64\\ 
 \hline 
 \textbf{Average total $T\NI\TU$  }&  5.24 & 7.99 & 5.79 & 7.75 & 4.49 \\  \hline
 \textbf{Average total $T\I\TA$  } &  6.59 & 10.20 &  7.30 & 9.99 & - \\  \hline
 \textbf{Average total $T\I\TA$  } &  5.19 & 7.89 & 5.80 &  7.57 & - \\  \hline
 \hline
  \textbf{Average total $T$ build all} & 1.40 & 1.71 &  1.56 & 1.07& 1.24 \\ 
  \hline
  \textbf{Average total $T$ build part } & 3.51 & 1.54 &    1.61 & 1.56 & 0.85\\ 
 \hline
 
\end{tabular}
\label{tab:distributiondata}  
\end{table}

The two numbers in parenthesis in the third row of \tab \ref{tab:realworldata} indicate the actual number of representatives we were able to optimize using the triangle-loss methods (all edge-loss representatives were optimized). For the \textbf{genome} and \textbf{H3N2} data sets, we are not able to compute all triangle-loss cycle representatives due to the large number of 2-simplices born between the birth and death interval of some cycles. For instance, for a particular cycle representative in the \textbf{genome} data set, there were $10{,}522{,}991$ 2-simplices born in this cycle's lifespan. 
Also, given the large number of $2$-simplices in the simplicial complex, we are not able to build the full $\partial_2$ matrix due to memory constraints, denoted by - in the penultimate row of \tab \ref{tab:realworldata}. 

Below we describe some insights on computation time drawn from the two tables. 
\\
\\
\noindent \emph{Persistence and optimization ($T_\textbf{persistence}$ vs. $T_\bullet^*$)}

We observe that $T_\bullet^*$\footnote{Including the time of constructing the input to the optimization programs.} $>$ $T_\textbf{persistence}$  e.g. for 5 out of the 11 real-world data sets described in \se \ref{sec: realworlddata} when using the 4 edge-loss methods.  The same inequality holds in 9 out of the 11 data sets when using the two uniform-weighted triangle-loss methods. For all of the synthetic data described in Sections \ref{sec: randompointclouds} and \ref{sec:erdos}, we have $T_\bullet^* > T_\textbf{persistence}$  when using all eight optimization programs. Therefore, the computational cost of optimizing a basis of cycle representatives generally exceeds the cost of computing such a basis.

This somewhat surprising result highlights the computational complexity of the algorithms used both to compute persistence and to optimize generators.  A common feature of both the persistence computation and linear optimization is that empirical performance  typically outstrips asymptotic complexity by a wide margin; the persistence computation, for example, has cubic complexity in the size of the complex, but usually runs in linear time.  Thus, worst-case complexity paints an incomplete picture.   Moreover, naive ``back of the envelope'' calculations are often hindered by lack of information.  For example, the persistence computation (which essentially reduces to Gaussian elimination) typically processes each of the $m$ columns of a boundary matrix $\partial_n$ in sequence. The polytope of feasible solutions for an associated linear program (edge-loss or triangle-loss) may have many fewer or many more vertices than $m$, depending on the program; moreover, even if the number of vertices is very high, the number of \emph{visited} vertices (e.g., by the simplex algorithm) can be much lower.  Without knowing these numbers \emph{a priori}, run times can be quite challenging to estimate. Empirical studies, such as the present one, give a picture of how these algorithms perform in practice.
\\
\\
\noindent \emph{Integral and nonintegral programs ($T^I$ vs. $T^{NI}$)}

In Tables \ref{tab:realworldata} and \ref{tab:distributiondata}, we observe that $T^I$ $>$ $T^{NI}$, i.e., the total computation time of optimizing a basis of cycle representatives using an integer program exceeds the computation time using a non-integer constrained program. Yet, $T^I$ and $T^{NI}$ are on the same order of magnitude, for both edge-loss methods and triangle-loss method.

Let $r\EU = \frac{t\I\EU}{t\NI\EU},$ where $t^*_\bullet$ represents the computation time for optimizing a single cycle representative. We define $r\EL$ and $r\TU$ similarly. We compute each for every cycle representative for data described in Tables \ref{tab:realworldata} and \ref{tab:distributiondata}. Let $\bar{r}_\bullet$ denote the average of $r_\bullet$ and $\sigma_{r_\bullet}$ denote the standard deviation of $r_\bullet$. We have $\Bar{r}\EU = 1.49, \sigma_{r\EU} = 1.34$, $\Bar{r}\EL = 1.55,  \sigma_{r\EL} = 1.38$, $ \sigma_{r\TU} = 1.35, \sigma_{r\TU} = 2.86$. \fig \ref{fig:lp_mip_ratio_df}(A, C, E) plots $r_\bullet$ using scatter plots and \fig \ref{fig:lp_mip_ratio_df}(B, D, F) displays the same data using box plots. The vertical axis represents the ratio between the MIP time and LP time of optimizing a cycle representative. The horizontal axis in the scatter plots represents the computation time to solve the LP. The red line in each subfigure represents the horizontal line $y=1$. As we can see from the box plots, the ratio between the computation time of MIP and LP for most of the cycle representatives ($>50\%$) is around $1$ and less than $2$. Although there are cases where the computation time of solving an MIP is $71.03$ times the computation time of solving an LP, such cases happen only for cycle representatives with a very short LP computation time.  

\begin{figure}[]
\begin{center}
\includegraphics[width=\textwidth]{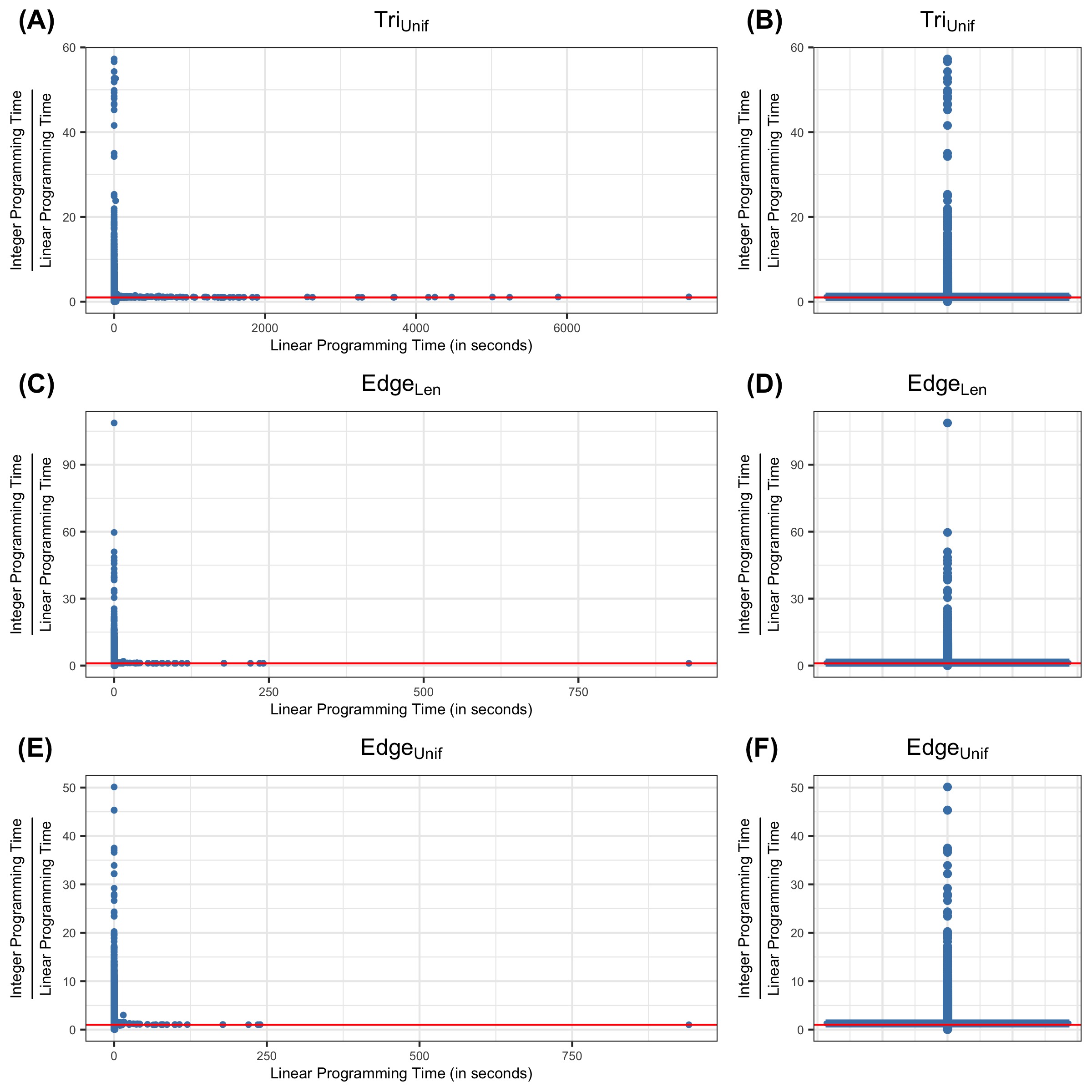} 
\end{center}
 \caption{ 
 Ratio between the computation time of solving an integer programming problem (Programs \ref{itm:tri_IU}, \ref{itm:edge_IL}, \ref{itm:edge_IU}) and the computation time of solving a linear programming problem (Programs \ref{itm:tri_NIU}, \ref{itm:edge_NIL}, \ref{itm:edge_NIU}) for all of the cycle representatives from data sets described in Sections \ref{sec: realworlddata}, \ref{sec: randompointclouds}, and \ref{sec:erdos}. Subfigures  \textbf{(A), (C), (E)} plot the data using scatter plots and subfigures  \textbf{(B), (D), (F)} show the same data using box plots. The vertical axis represents the ratio between the integer programming time and linear programming time of optimizing a cycle representative and the horizontal axis represents the computation time to solve the linear program. The red line in each subfigure represents the horizontal line $y=1$, where the time for each optimization is equivalent. As we can see from the box plots, the ratio between the computation time of integer programming and linear programming for most of the cycle representatives ($>50\%$) center around $1$.}\label{fig:lp_mip_ratio_df}
\end{figure}

\vspace{.1in}
\noindent \emph{Triangle-loss versus edge-loss programs. ($T_{T\text{-}\bullet}$ vs. $T_{E\text{-}\bullet}$)} 

We observe that the edge-loss optimal cycles are more efficient to compute than the triangle-loss cycles for more than $65.70\%$ of the cycle representatives\footnote{Obayashi \cite{Obayashi2018} proposes a few techniques for accelerating the triangle-loss methods which we did not implement.}. This aligns with our intuition because for representatives with a longer persistence, the number of columns in the boundary matrix $\partial_{2}[ \mathcal{F}_1 , \hat {\mathcal{F}}_{2} ]$ grows faster than that of $\partial_1[:, \mathcal{Q}]$. Consequently, the edge-loss programs are feasible for all cycle representatives we experiment with, whereas the triangle-loss technique fails for $6$ representatives due to the large problem size (with greater than twenty million triangles born between the life span of those cycle representatives).
 
\noindent \emph{Different linear solvers}

The choice of linear solver can significantly impact the computational cost of the optimization problems. We perform experiments on length/uniform-minimal cycle representatives using the GLPK \cite{glpk} and Gurobi \cite{gurobi} linear solvers on $90$ data sets drawn from the normal distribution with dimensions from $2$ to $10$ with a total of $4{,}815$ cycle representatives. The median of the computation time ratio between using the GLPK solver and Gurobi solver is $2.22$ for \pr
\ref{itm:edge_NIU}, $1.68$ for \pr \ref{itm:edge_IU}, $2.28$ for \pr \ref{itm:edge_NIL}, and $1.73$ for \pr \ref{itm:edge_IL}, and the computation time using the GLPK solver can be $30$ times larger than the computation time using the Gurobi solver for some cycles, see figure in the Supplementary Material. Therefore, we use the Gurobi solver in all other analyses in this paper.

\subsection{Performance of acceleration techniques} \label{accelerateresults}

\noindent \emph{Edge-loss optimal cycles} 

As discussed in \se \ref{acceleratation technique}, we accelerate edge-loss problems by replacing $\partial_2[:, \goodtriangles]$ with the column basis submatrix of $\partial_2[:, \hat \goodtriangles]$. We further reduce the size of $\partial_2[:, \hat \goodtriangles]$ by only including the rows corresponding to 1-simplices born before the birth time of the cycle, denoted as $\partial_2[\goodedges, \hat \goodtriangles]$. We perform experiments on a small-sized data set (\textbf{Senate}) that consists of 103 points in dimension $60$ and a medium-sized data set (\textbf{House}) that contains $445$ points in dimension $261$. In \tab
\ref{unif-acceleration-table}, we report the computation time of solving the optimization problems in Programs \ref{itm:edge_NIU}, \ref{itm:edge_IU}, \ref{itm:edge_NIL}, and \ref{itm:edge_IL} using these three techniques of varying the size of the input boundary matrix. The results align with intuition that the optimizations are faster with fewer input variables, and thus, the third implementation is the most efficient among the three.

\newcolumntype{L}{>{\centering\arraybackslash}m{3cm}}

\renewcommand{\arraystretch}{1.5}
\begin{table}[!h]
\centering
\caption{Computation time of three differently sized input boundary matrices to edge-loss and triangle-loss methods. The superscripts denote whether the program requires an integral solution or not, and the subscripts indicate the type of optimal cycle. All time is measured in seconds. We perform experiments on a small-sized data set (\textbf{Senate}) that consists of $103$ points in dimension $60$ and a medium-sized data set (\textbf{House}) that contains $445$ points in dimension $261$. For edge-loss methods, we consider three implementations to solve these optimization problems: using the full boundary matrix $\partial_2$, using the basis columns and all rows $\partial_2[:, \hat \goodtriangles]$, and using the basis columns and deleting rows corresponding to edges born after the birth time of the cycle $\partial_2[\goodedges, \hat \goodtriangles]$. For triangle-loss methods, we consider three approaches to solve these optimization problems: zeroing out the columns in the boundary matrix outside of $[b_i,d_i]$ denoted as $\partial_{2_{zero}}$, deleting columns outside of this range $\partial_2[:,\hat {\mathcal{F}}_{2}]$, and deleting both columns outside of $[b_i, d_i]$ and rows born after $d_i$ denoted $\goodvolmatrix$. The \textbf{House} data set was too large to implement the first method.}\label{unif-acceleration-table}
\scriptsize
\begin{tabular}{ |>{\centering}m{11em}   >{\centering\arraybackslash}m{8em}>{\centering\arraybackslash}m{8em}  >{\centering\arraybackslash}m{8em} >{\centering\arraybackslash} m{8em}|}
 \hline
 & \multicolumn{4}{c|}{\textbf{Edge-loss Optimal Cycles (\pr \eqref{eq:edgelossgeneral})}} \\
\cline{3-4}
  & \textbf{T}  & \textbf{$ \partial_2$}  & \textbf{$\partial_2[:, \hat \goodtriangles]$}  & \textbf{$\partial_2[\goodedges, \hat \goodtriangles]$}  \\  [0.5ex]  \hline \hline
    \multirow{4}{*}{\textbf{Small Data Set (Senate)}} & 
 $T\EU\NI$ & 1.06& 1.03 &	0.41  \\  &
  $T\EU\I$ &1.25 &1.23	& 0.60 \\  &
    $T\EL\NI$ &1.05&  1.05 &	0.41   \\   &
  $T\EL\I$  & 1.23 &1.19 & 0.65 \\ 
  \hline 
  \multirow{4}{*}{\textbf{Medium Data Set (House)}} & 
 $T\EU\NI$ & 184.70 & 122.72 &	47.10  \\ &
  $T\EU\I$ &188.88 & 147.27	&  64.64 \\  &
    $T\EL\NI$ &184.41&  121.80 &	46.02    \\   &
  $T\EL\I$ & 193.01 & 146.46 & 63.87 \\ [0.5ex] \hline \hline
   & \multicolumn{4}{c|}{\textbf{Triangle-loss Optimal Cycles (\pr \eqref{eq:trianglelossgeneral})}} \\ \cline{3-4}
  & \textbf{\textbf{T}}  & \textbf{$\partial_{2_{zero}}$}  & \textbf{$\partial_2[:,\hat {\mathcal{F}}_{2}]$}  & \textbf{$\goodvolmatrix$} \\[0.5ex] 
 \hline 
 \hline
 \multirow{2}{*}{\textbf{Small Data Set (Senate)}}& 
 $T\TU\NI$    & 23.25   & 0.99  & 0.59 \\  &
  $T\TU\I$   & 25.31  & 1.06   & 0.66   \\ \hline
  \multirow{2}{*}{\textbf{Medium Data Set (House)}} & 
 $T\TU\NI$   
   &  -  &	286.10 &   194.70 \\ &
  $T\TU\I$  
    & -	& 317.45  &  237.73\\\hline 
\end{tabular}
\label{tab:implementationcompare}
\end{table}

\vspace{0.1in}
\noindent \emph{Triangle-loss optimal cycles}

As discussed in \se \ref{acceleratation technique}, there are also multiple approaches to creating the input to the triangle-loss problems. To recap, we restrict the boundary matrix $\partial_2$ to $\goodvolmatrix$ for a particular cycle representative $\optimalrep^i$. We can do so in various ways: (i) zeroing out the columns of $\partial_2$ not in $\hat{\mathcal{F}}_2$ but maintaining the original size of the boundary matrix, (iia) building the entire boundary matrix $\partial_2$ once and then deleting the columns not in $\hat{\mathcal{F}}_2$ for each representative, (iib) building the columns in $\hat{\mathcal{F}}_2$ iteratively for each representative, and (iiia/b) in conjunction with (iia) or (iib) respectively, reducing the rows of the boundary matrix of $\partial_2$ to only include the rows born before the death time of the cycle $\mathcal{F}_1$. 

In \tab \ref{unif-acceleration-table}, we summarize the computation time of solving Programs 
\ref{itm:tri_NIU} and
\ref{itm:tri_IU}
 to find triangle-loss optimal cycles with three different sized boundary matrices as input: (i) zeroing out, (iib) deleting partial columns, and (iiib) deleting partial rows and columns. Note that (iia) and (iib) both result in the same boundary matrix $\partial_2[:, \hat{\mathcal{F}}_2]$. We again use the \textbf{Senate} and \textbf{House} data sets for analysis. We see that deleting partial rows and columns is the most efficient among the three implementations, which again matches intuition that reducing the number of variables accelerates the optimization problem. 

We also ran experiments on the real-world data sets to compare the timing of building $\partial_{2}[ \mathcal{F}_1 , \hat {\mathcal{F}}_{2}]$ via methods (iiia) and (iiib) and summarize the results in the last two rows of \tab \ref{tab:realworldata}. We find that approach (iiia), where we build the entire matrix $\partial_2$ and then delete columns for each cycle representative, is in general faster than approach (iiib), where the boundary matrix $\partial_2[\mathcal{F}_1, \hat{\mathcal{F}}_2]$ is iteratively built for each representative. However, this latter approach can be more useful for large data sets, whose full boundary matrix $\partial_2$ might be too large to construct. For example, building the full boundary matrix for the \textbf{Genome} data set caused Julia to crash due to the large number  of $2$-simplices ($453{,}424{,}290$ triangles for the \textbf{Genome} data set and $3{,}357{,}641{,}440$ triangles for the \textbf{H3N2} data set). Whereas, by implenting (iiib) where we rebuild a part of the boundary matrix for each representative, we were able to optimize $115$ out of the $117$ cycle representatives for the \textbf{Genome} data set and $52$ of $57$ cycle representatives for the \textbf{H3N2} data set.

\subsection{Coefficients of optimal cycle representatives in data sets from \se \ref{sec: realworlddata} and \se \ref{sec: randompointclouds}}
\label{coefficient}
As discussed in \se \ref{secl0l1}, the problem of solving an $\ell_0$ optimization is desirable for its interpretability but doing so is NP-hard \cite{NPhardL0}. Often, $\ell_0$ optimization is approximated by an $\ell_1$ optimization problem, which is solvable in polynomial time. If the coefficients of a solution of the $\ell_1$ problem are in $\{-1,0,1\}$, then it is in fact an $\ell_0$ solution to the restricted optimization problem where we require solutions to have entries in $\{-1, 0, 1\}$ \cite{Escolar2016, Obayashi2018}. 

We find that $99.50\%$ of the original, unoptimized cycle representatives obtained from data sets described in \se \ref{sec: realworlddata} and $99.91\%$ of the unoptimized cycle representatives obtained from data sets described in \se \ref{sec: randompointclouds} have coefficients in $\{-1,0,1\}$. All unoptimized cycle representatives turned out to have integral entries.

We then systematically check each solution of the eight programs
\ref{itm:edge_NIU},
\ref{itm:edge_IU},
\ref{itm:edge_NIL},
\ref{itm:edge_IL},
\ref{itm:tri_NIU}, 
\ref{itm:tri_IU}, and
\ref{itm:tri_NIA}, 
\ref{itm:tri_IA}
 across all data sets and all optimal cycle representatives from data discussed in Sections \ref{sec: realworlddata} and \ref{sec: randompointclouds}\footnote{We discuss the coefficients of the Erd\H{o}s-R\'enyi complexes of \se \ref{sec:erdos} in \se \ref{sec:erdosbehavior}.} found by Algorithms \ref{alg:edge} and \ref{alg:rdvvolumeoptimization} and Program \eqref{eq:escolarargmin} to see if the coefficients are integral or in $\{-1,0,1\}$. We analyze the $18{,}163$ optimal cycle representatives and find the following consistent results.

 \emph{All} optimal solutions to \pr \eqref{eq:escolarargmin} (edge-loss minimization of filtered cycle bases) and \emph{all but one}  of the solutions returned by Algorithm \ref{alg:edge} (edge-loss minimization of \emph{persistent} cycle bases)  had coefficients in $\{-1, 0, 1\}$; see the table in the Supplementary Material for details. The exceptional representative $\optimalrep\EU\NI$ occurred in the \textbf{C.elegans} data set, with coefficients in $\{-0.5,0,0.5\}$.  It corresponds to one of only a few cases where two intervals with equal birth and death time occur within the same data set; see \se \ref{duplicate intervals}. An interesting consequence of these fractional coefficients is that here, unlike all other cycle representatives from data discussed in \se \ref{sec: realworlddata} and \se \ref{sec: randompointclouds}, the $\ell_0$-norm and $\ell_1$-norm differ.  This accounts for the sole point that lies below the $y=1$ line in the first column of row (B) in \fig \ref{fig:lengthcompare}.

 \begin{figure}[h!]
\begin{center}
\includegraphics[width=\textwidth]{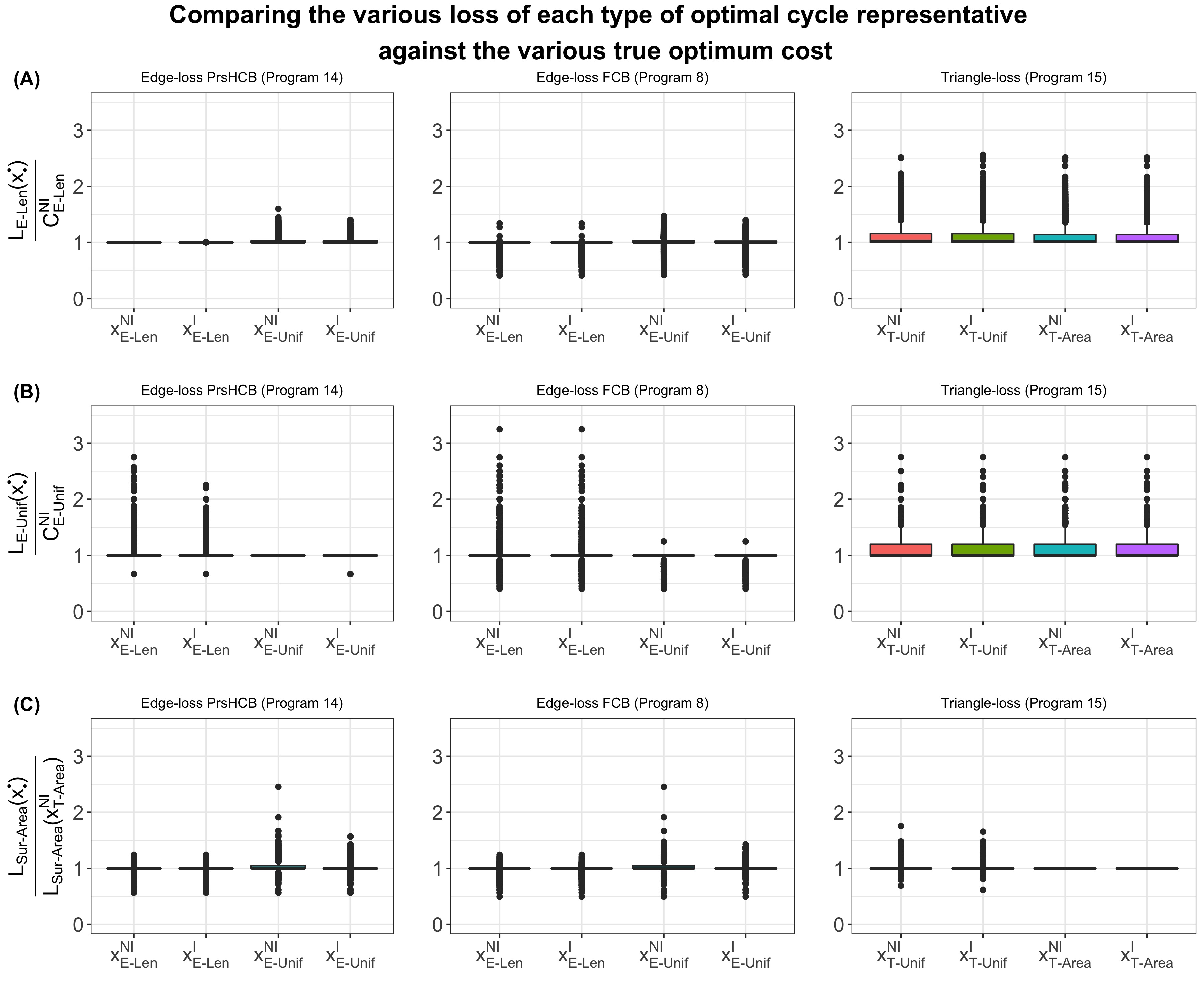}
\end{center}
\caption{Box plots of the ratios between (\textbf{A}) $L\EL(\optimalrep_\bullet^*)$ and $C\EL\NI$,  \textbf{(B)} $L\EU(\optimalrep_\bullet^*)$ and $C\EU\NI$, and  \textbf{(C)} $L_{Sur\text{-}Area}(\optimalrep_\bullet^*)$ and $L_{Sur\text{-}Area}(\optimalrep \TA\NI)$. 
Within each row, the denominator is fixed across all three columns, and corresponds to the $\setofpersistenthcyclebases$ cycles which are solutions to Programs \ref{itm:edge_NIU} row (\textbf{A}), \ref{itm:edge_NIL} row (\textbf{B}), or \ref{itm:tri_NIA} row (\textbf{C}).
The horizontal axis of each subplot is the type of optimal representative.
The cost of the optimal solutions to programs
\pr \eqref{eq:edgelossgeneral}, \pr \eqref{eq:escolarargmin}, and \pr \ref{itm:tri_NIA} was equal regardless of the presence of an integer constraint in nearly all examples (as discussed in \se \ref{coefficient}), resulting in two columns in each row having ratio 1. 
The data used in \textbf{(A)} and \textbf{(B)} aggregate over all cycle representatives from data described in Sections \ref{sec: realworlddata} and \ref{sec: randompointclouds}. The data used in \textbf{(C)} aggregate the $746$ cycle representatives from $40$ point clouds with ambient dimension of $2$ from data described in \ref{sec: randompointclouds}. We observe that some edge-loss and uniform-weighted-triangle-loss optimal cycles have a surveyor's area strictly smaller than the denominator in row (\textbf{C}); refer to Figure \ref{fig:areaExample} and \se  \ref{sec:comparing optimal generators against different loss functions} to see why this may happen. It is possible for $L\EU(\optimalrep_\bullet^*)$ to be strictly smaller than $C\NI\EU$ because the cycle $\optimalrep\NI\EU$ is calculated to be optimal relative to $\ell_1$ loss, not $L\EU$, which is a measure of $\ell_0$ loss. We observe this behavior in the first plot on the second row, discussed in detail in \se \ref{coefficient}.} 

\label{fig:lengthcompare}
\end{figure}

 On the one hand, this exceptional behavior could bear some connection to Algorithm \ref{alg:edge}.  
 Recall that Algorithm \ref{alg:edge} operates by removing a sequence of cycles from a cycle basis, replacing each cycle with a new, optimized cycle on each iteration (that is, we swap the $j+1^{th}$ element of $\obasis^{j}$ with an optimized cycle $\optimalrep$ in order to produce $Z^{j+1}$). Replacing optimized cycles in the basis is key, since without replacement it would be possible in theory to get a set of optimized cycles that no longer form a basis. We verified that if we modify Algorithm \ref{alg:edge} to skip the replacement step, we achieve $\{-1,0,1\}$ solutions for the exceptional \textbf{C.elegans} cycle representative (for the other repeated intervals we obtain the same optimal cycle with and without the replacement).  
 On the other hand, we find that even with replacement the GLPK solver obtains a solution with coefficients in $\{-1, 0, 1\}$.  Thus, every one of the cases considered produced $\{-1, 0, 1\}$ coefficients for at least one of the two solvers, and the appearance of fractional coefficients may be naturally tied to the specific  implementation of the solver used.

 When solving the triangle-loss problems by Algorithm \ref{alg:rdvvolumeoptimization}, we obtain one solution with coefficients of $2$ (for both the integral and non-integral problems) for one cycle representative from the logistic distribution data set. For that single representative, we rerun the optimizations with the additional constraint that it have coefficients with an absolute value less than or equal to one, which results in an infeasible solution. 

The surprising predominance of solutions in $\{-1, 0, 1\}$ suggests that in most cases, the modeler can reap both the computational advantage of $\ell_1$ solutions and the theoretical and interpretability advantages of $\ell_0$  solutions\footnote{Recall that, in the current discussion, $\ell_0$ optimality refers to the \emph{restricted} integer problem where coefficients are constrained to lie in $\{-1,0,1\}$.  The unrestricted problem (about which we have nothing to say) may have quite different properties.} by solving an $\ell_1$ optimization problem. Further, we find that the optimum cost is the same whether we require an integer solution or not for more than $99.97\%$ of solutions to \pr $\Edge\len$, $100\%$ of solutions to $\Edge\unif$, and $100\%$ of solutions to $\Tri\unif$. \emph{Thus, the modeler can drop the integral constraint to save computation time while still being able to achieve an integral solution in most cases.}

\subsection{Comparing optimal cycle representatives against different loss functions}\label{sec:comparing optimal generators against different loss functions}

We compare the optimal cycle representatives against different loss functions to study the extent to which the solutions produced by each technique vary. We consider two loss functions on an $H_1$ cycle representative $\optimalrep \in \Cycles_1(K)$:
$$
L\EL(\optimalrep) = \sum_{\sigma \in \supp(\optimalrep)} \mathrm{Length}(\sigma)
$$
where $\mathrm{Length}(\sigma)$ is the distance --- as designated by the metric $d$ used to define the VR complex --- between the two vertices of a $1$-simplex $\sigma$, and
$$
L\EU(\optimalrep) = ||\optimalrep||_0=|\supp(\optimalrep)|, 
$$ 
the number of 1-simplices (edges) in a representative.

We also consider two loss functions on 2-chains $\volvec \in \Chains_2(K)$, namely area-weighted loss:
$$
L\TA(\volvec) = \sum_{\tau \in \supp(\volvec)} \mathrm{Area}(\tau)$$
where $\mathrm{Area}(\tau)$ is the area of a $2$-simplex as computed by Heron's Formula, and uniform-weighted loss
$$
L\TU(\volvec) = ||\volvec||_0 = |\supp( \volvec)|.
$$

\begin{remark}
These weighted $\ell_0$ loss functions \emph{differ} from the objective functions used in the optimization problems presented in \se \ref{sec:programsandmethods}, which measure weighted $\ell_1$ norm.  However, weighted $\ell_0$ norm and weighted $\ell_1$ norm agree on solutions with $\{-1, 0, 1\}$ coefficients, and (as reported in \se \ref{coefficient}) nearly all cycle representatives for the \se \ref{sec: realworlddata} and \ref{sec: randompointclouds} data satisfy this condition, both pre- and post-optimization.
\end{remark}

In the special case where $\supp(\optimalrep)$ determines a simple closed polygonal curve $c$ with vertices $(p^1, q^1), \ldots, (p^n, q^n) \in \R^2$, we also use the Surveyor's Area Formula \cite{TheSurveyorsAreaFormula}  to quantify area of $\optimalrep$ as  
    \begin{align*}
    \textstyle
    L_{Sur\text{-}Area}(c)
    =
    \frac{1}{2}
    \left  |
        \sum_{i=1}^{n} p^i q^{i+1} - 
        p^{i+1} q^{i}
    \right |
    \end{align*}
where, by convention, $p^{i+1} = p^1$ and $q^{i+1} =  q^1$. We evaluate this function only when (i) the ambient point cloud of the VR complex is a subset of $\R^2$, (ii) $\supp(\optimalrep)$ forms a graph-theoretic cycle when regarded as a subset of edges in the combinatorial graph formed by 1-skeleton of $K$, and (iii) no pair of distinct closed line segments intersect one another.

In the case when we compute the loss function of a corresponding optimal solution, we use the notation for the cost $C_\bullet^* := L_\bullet(\optimalrep_\bullet^*)$ to an edge-loss problem that finds optimal solution $\optimalrep_\bullet^*$, and $C_\bullet^* := L_\bullet(\volvec_\bullet^*)$ to a triangle-loss problem that finds optimal solution $\volvec_\bullet^*$. For instance, $C\EU\NI=L\EU(\optimalrep\EU\NI)$. We will also compute the loss functions of optimal solutions from differing optimizations. For instance, $L\EL(\optimalrep\EU\NI)$, and in that case, we do not use the $C_\bullet^*$ notation.

\begin{figure}[hbt!]
\begin{center}
\includegraphics[width=\textwidth]{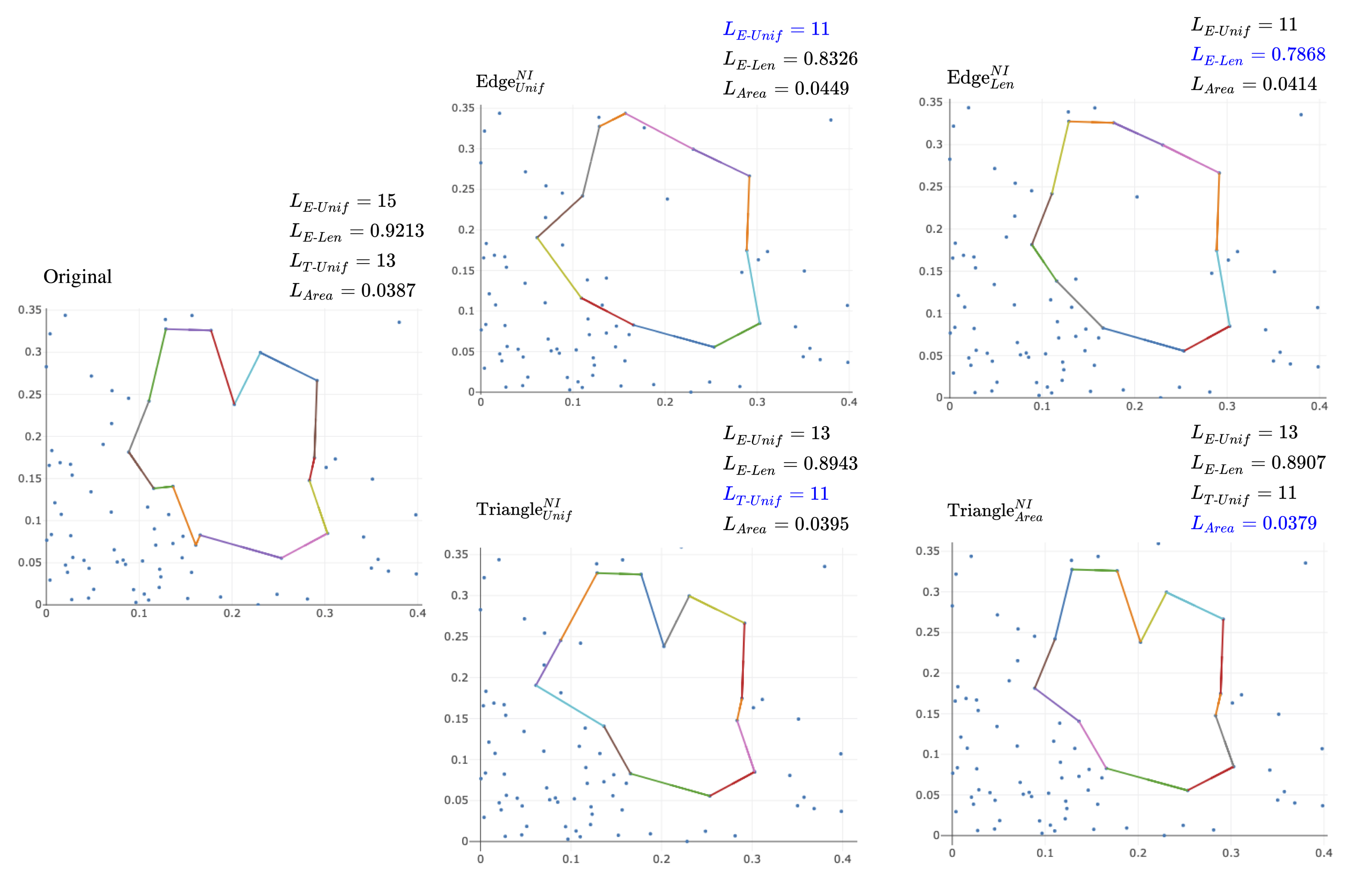}
\end{center}
\caption{Examples of different optimal cycles and cost against different loss functions using a point cloud of $100$ points with ambient dimension $2$ randomly drawn from a normal distribution. The upper left corner of each subfigure labels the optimization algorithm used to optimize the original cycle representative. The upper right corner of each subfigure records the different measures of the size of the optimal representative. Blue text represents the measure an algorithm sets out to optimize. 
}\label{fig:Examplesofeachoptimalcycles} 
\end{figure}

\fig \ref{fig:Examplesofeachoptimalcycles} shows an example of various optimal cycle representatives obtained from Programs
\ref{itm:edge_NIU},
\ref{itm:edge_NIL},
\ref{itm:tri_NIU}, and
\ref{itm:tri_NIA}
on an example point cloud drawn from the normal distribution in $\R^2$. In this example, solutions obtained from Algorithm \ref{alg:edge} and \pr \eqref{eq:escolarargmin} are the same. Each subfigure is labeled by program in the upper left corner. The values of different loss functions evaluated on each optimal representative appear in the upper right corner.  
We do not compute $L\TU$ or $L\TA$ of the optimal edge-loss minimal cycle representatives, as no bounding $2$-chain for this $1$-cycle is specified in the optimization.\footnote{We formulated an Obayashi-style linear program similar to \pr \eqref{eq:trianglelossgeneral} to compute the volume of edge-loss optimal cycles but in many cases it had no feasible solution.} We observe that various notions of optimality lead to differing cycle representatives, yet each solution to an optimization problem minimizes the loss function it is intended to optimize. This will not always be the case, as we will see momentarily.

\fig\ref{fig:lengthcompare} reports ratios on the losses $L\EU$, $L\EL$, and $L_{Sur\text{-}Area}$\footnote{Recall, we only compute $L_{Area}$ on the 2-dimensional distribution data.} for the eight $\setofpersistenthcyclebases$ optimization problems detailed in \se \ref{sec:programsandmethods} as well as the four edge-loss $\setoffilteredcyclebases$ problems from \pr \eqref{eq:escolarargmin}, evaluated on the data from Sections \ref{sec: realworlddata} and \ref{sec: randompointclouds}. 
These ratios suggest that the uniform-weighted and length-weighted edge-loss cycles do minimize what they set out to minimize, namely, the number of edges and the total length, respectively. We also observe that intuitively the less-constrained solutions to the $\setoffilteredcyclebases$ \pr \eqref{eq:escolarargmin} can have a lower cost than the more-constrained solutions to the $\setofpersistenthcyclebases$ \pr \eqref{eq:edgelossgeneral}. 
 
We also see that the edge-loss-minimal cycles have similar loss in terms of length and number of edges ($L\EL$ and $L\EU$) whereas the triangle-loss-minimal cycles can have larger losses ($L\EL(\optimalrep\TU)$ and $L\EU(\optimalrep\TU)$). We find that $63.28\%$ of the $L\EU$ minimal cycle representatives are also $L\EL$ minimal while $99.66\%$ of the $L\EL$ minimal cycle representatives are also $L\EU$ minimal across all cycle representatives from all data sets for $\setofpersistenthcyclebases$ cycles. Similarly, we find that $61.31\%$ of the $L\EU$ minimal cycle representatives are also $L\EL$ minimal while $99.32\%$ of the $L\EL$ minimal cycle representatives are also $L\EU$ minimal across all cycle representatives from all data sets for $\setoffilteredcyclebases$ cycles. This suggests that modelers can often use the length-weighted minimal cycle to substitute the uniform-weighted minimal cycle. However, the triangle-loss cycles can potentially provide very different results. 

Counterintuitively, the $L\TA$ optimal cycle representative might not be the representative that encloses the smallest surveyor's area. As shown in \fig\ref{fig:lengthcompare}, we observe that $15.55\%$ of $\optimalrep\EU\NI$, $13.14\%$ of $\optimalrep\EU\I$, $23.59\%$ of $\optimalrep\EL\NI$, and $23.59\%$ of  $\optimalrep\EL\I$ for the cycles from $\setofpersistenthcyclebases$ using Program \eqref{eq:edgelossgeneral} have an area smaller than that of the triangle-loss area-weighted optimal cycle $\optimalrep\NI\TA$. 
Similarly,  $15.55\%$ of $\optimalrep\EU\NI$, $12.87\%$ of  $\optimalrep\EU\I$, $24.53\%$ of $\optimalrep\EL\NI$, and $24.53\%$ of $\optimalrep\EL\I$ for the cycles from $\setoffilteredcyclebases$ using Program \eqref{eq:escolarargmin} have an area smaller than that of the triangle-loss area-weighted optimal cycle $\optimalrep\NI\TA$. Lastly, $3.22\%$ of $\optimalrep^{I}\TU$, $2.81\%$ of $\optimalrep^{NI}\TU$, and $2.95\%$ of $\optimalrep^{I}\TA$ for the cycles found using Program \eqref{eq:trianglelossgeneral} have an area smaller than that of the triangle-loss area-weighted optimal cycle $\optimalrep\NI\TA$.  

\begin{figure}[h!]
\begin{center}
\includegraphics[width=\textwidth]{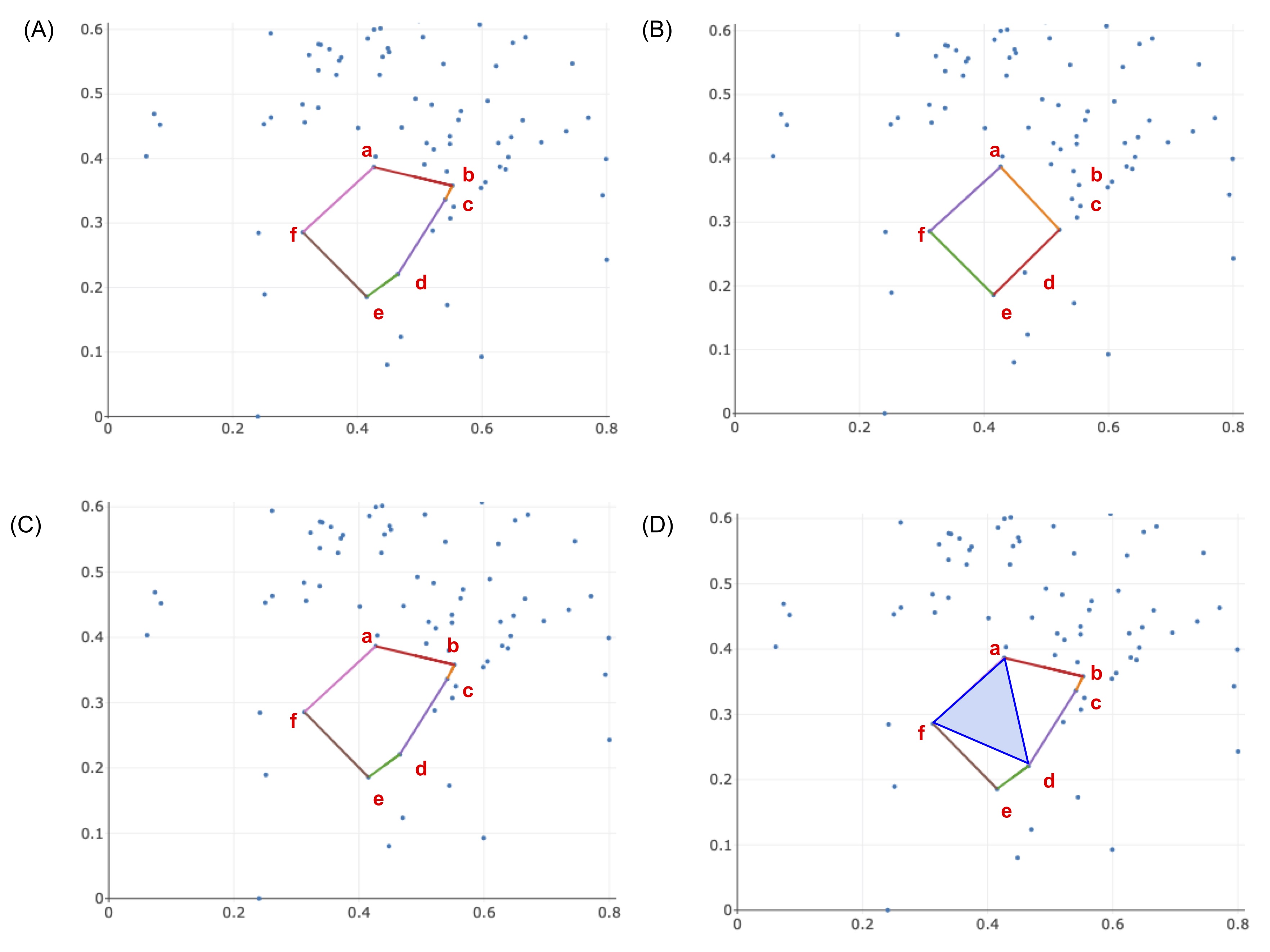}
\end{center}
\caption{An example illustrating when the area enclosed by the triangle-loss area-weighted optimal cycle, solution to \pr \ref{itm:tri_NIA}, can be larger than the area enclosed by the edge-loss length-weighted minimal cycle, solution to \pr \ref{itm:edge_NIL}. \textbf{(A)} is the original cycle of a representative point cloud in $\mathbb{R}^2$ drawn from the normal distribution, \textbf{(B)} is the length-weighted edge-loss optimal cycle, \textbf{(C)} is the area-weighted triangle-loss optimal cycle, in this example, it is the same cycle as the original cycle, \textbf{(D)} is the area-weighted minimal cycle where the blue shaded area marks the triangle born at the death time of the cycle.  
Constraint Equation \eqref{obacond1} specifies that the area-weighted optimal cycle must contain the 2-simplex born at the death time of the cycle. Therefore, this cycle must contain $(a,d,f)$ because it was born at the death time. The length-weighted minimal cycle does not have this constraint, and as such, can result in a smaller area. 
}\label{fig:areaExample}
\end{figure}

In Figure \ref{fig:areaExample}, we provide an example illustrating why the triangle-loss area-weighted optimal cycle, solving Programs \ref{itm:tri_NIA}, or
\ref{itm:tri_IA}, might not be the cycle that encloses the smallest surveyor's area. Another reason why the area-weighted triangle-loss cycles could have a larger enclosed area is that in the optimization problems, the loss function is the sum of the triangles the cycle bounds, not the real enclosed area. Therefore, the area-weighted triangle-loss cycle will have the optimal area-weighted optimal cost, but not necessarily the smallest enclosed area. 

\subsection{Comparative performance and precision of LP solvers}\label{Computational cost of the various optimization techniques}

Our experiments demonstrate that the choice of linear solver may impact speed, frequency of obtaining integer solutions, and frequency of obtaining $\ell_0$ optimal solutions. While these particular results are subject to change due to regular updates to each platform, they illustrate the degree to which these factors can vary.

As discussed in \se \ref{sec:timecomparisons}, the GLPK solver performs much slower than the Gurobi solver in an initial set of experiments. The GLPK solver also finds non-integral solutions when solving a linear programming problem in Programs
\ref{itm:edge_NIU}, and
\ref{itm:edge_NIL}
 more often than the Gurobi solver. On the same set of experiments as in \se \ref{sec:timecomparisons}, when finding the $\setoffilteredcyclebases$ using \pr \eqref{eq:escolarargmin}, $9.74\%$ of the edge-loss length-weighted minimal cycle representatives have non-integral entries, and $8.32\%$ of the edge-loss uniform-weighted minimal cycle representatives have non-integral entries when using the GLPK solver, whereas when using the Gurobi solver, $0.12\%$ of the length-weighted minimal cycle representatives have non-integral entries, and $0.04\%$ of the uniform-weighted minimal cycle representatives have non-integral entries. For the length-weighted minimal cycle representatives, the non-integral solutions differ from an $\ell_0$ optimal solution by a margin of machine error with both solvers. However, for the uniform-weighted minimal cycle representatives, the GLPK solver has $1.83\%$ of its non-integral solutions differing from an $\ell_0$ optimal solution by a margin not of machine epsilon, and the Gurobi solver has $0.02\%$ of its non-integral solutions differing from an $\ell_0$ optimal solution by a margin greater than machine epsilon. For the GLPK solver, when solving \pr
 \ref{itm:edge_NIU}, instead of finding an integral solution, it occasionally finds a solution with fractional entries that sum to $1$. For example, instead of assigning an edge a coefficient of $1$, it sometimes assigns two edges each with a coefficient of $0.5$. In that way, the solution is still $\ell_1$ optimal, but no longer $\ell_0$ optimal. \emph{Thus, the choice of linear solver may affect the optimization results.}

\subsection{Statistical properties of optimal cycle representatives with regard to various other quantities of interest}

\noindent \emph{Support of a representative forming a single loop in the underlying graph}

The support of the original cycle, $\supp(\originalrep) \subseteq \Simplices_1(K)$, need not be a cycle in the graph-theoretic sense.  Concretely, this means that the nullity, $p$, of column submatrix $\partial_1[:,\originalrep]$ may be strictly greater than 1 (in \fig \ref{fig:areaExample}, for example, $p=2$).  We refer to $p$ informally as the ``number of loops'' in $\originalrep$. 


We are interested in exploring how often the support of an original cycle representative forms a single loop in the underlying graph. We analyze each of the 360 synthetic data sets of various dimensions and distributions discussed in \se \ref{sec: randompointclouds} as well as the 100 Erd\H{o}s-R\'enyi random complexes discussed in Section \ref{sec:erdos} and display the results in \fig \ref{fig:loopsbreakdown}. We find that the majority of the original cycle representatives have one loop. 
After optimizing these cycle representatives with the edge-loss methods, we verify that all $\setoffilteredcyclebases$ and $\setofpersistenthcyclebases$ optimal cycles only have one loop, whereas $0.13\%$ of the triangle-loss cycles have $2$ loops. However, we observe that $91.93\%$ of the optimal cycle representatives for Erd\H{o}s-R\'enyi complexes have $1$ loop, $5.81\%$ have $2$ loops, and $2.14\%$ have more than $2$ loops, with $15$ as the maximum number of loops. 

 \begin{figure}[]
\begin{center}
\includegraphics[width=\textwidth]{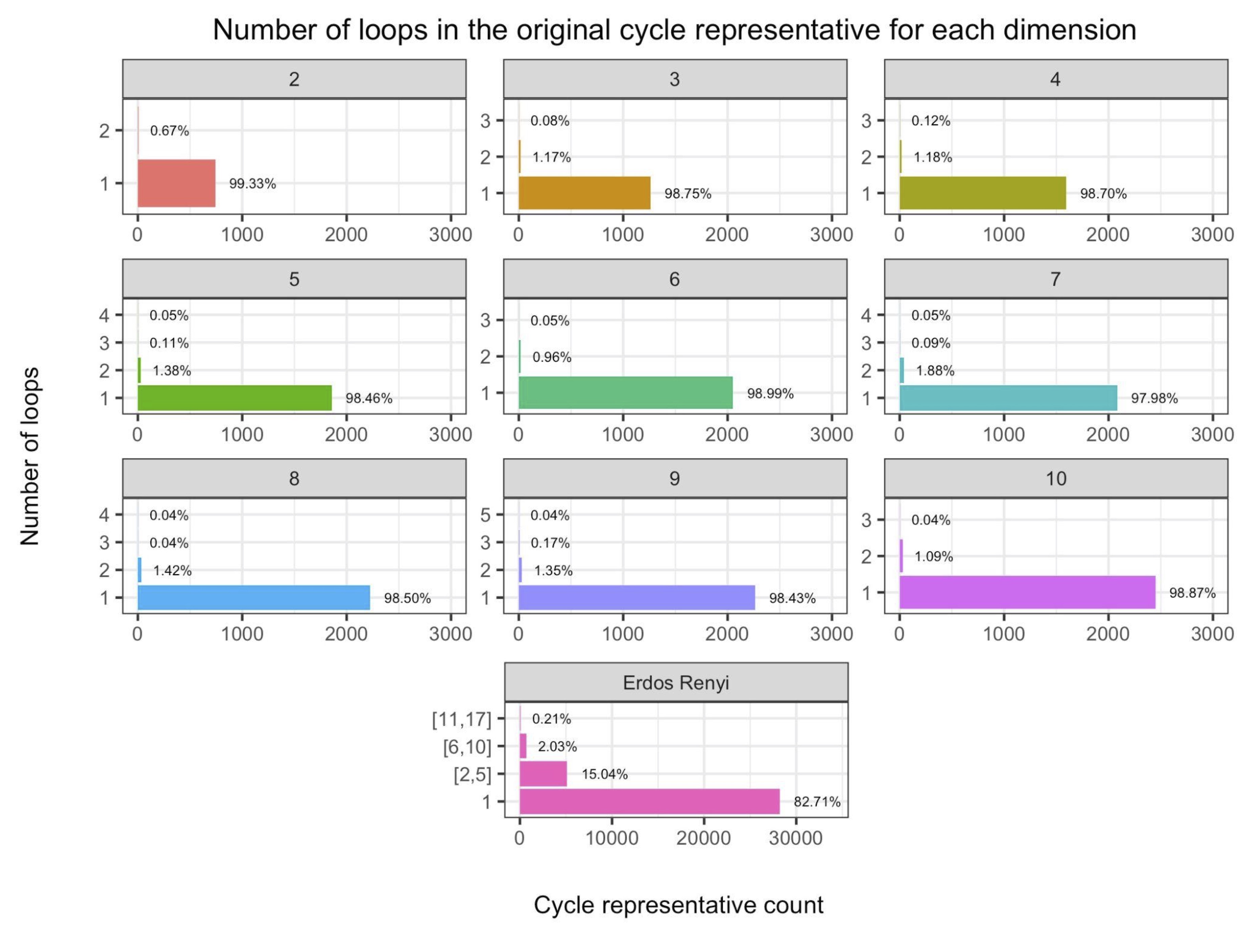}
\end{center}
\caption{  
(Rows 1-3) Number of loops in the original cycle representative aggregated by dimension (labeled by subfigure title) in the  $360$ randomly generated distribution data sets discussed in Section \ref{sec: randompointclouds} and (Row 4) same for the  Erd\H{o}s-R\'enyi random complexes discussed in Section \ref{sec:erdos}, where we bin cycle representatives that have 2-5 loops, 6-10, loops, or more than 10 loops. The horizontal axis is the number of cycle representatives and the vertical axis is the number of loops in the original representative. We observe that for the distribution data, an original cycle representative can have up to 5 loops in higher dimensions, and in general, it is uncommon to find an original representative with multiple loops. However, we observe that $17.47\%$ of the cycle representatives for Erd\H{o}s-R\'enyi complexes have more than $1$ loop, with a maximum number of $17$ loops in a cycle representative.}
\label{fig:loopsbreakdown}
\end{figure}

As shown in \fig \ref{fig:reductioncompare} the reduction in size of the original cycle, formalized as $\frac{C^*_\bullet}{L_\bullet(\originalrep)}$,  correlates closely with the reduction in the number of loops by the optimization.

\begin{figure}[]
    \centering
    \includegraphics[width=.7\textwidth]{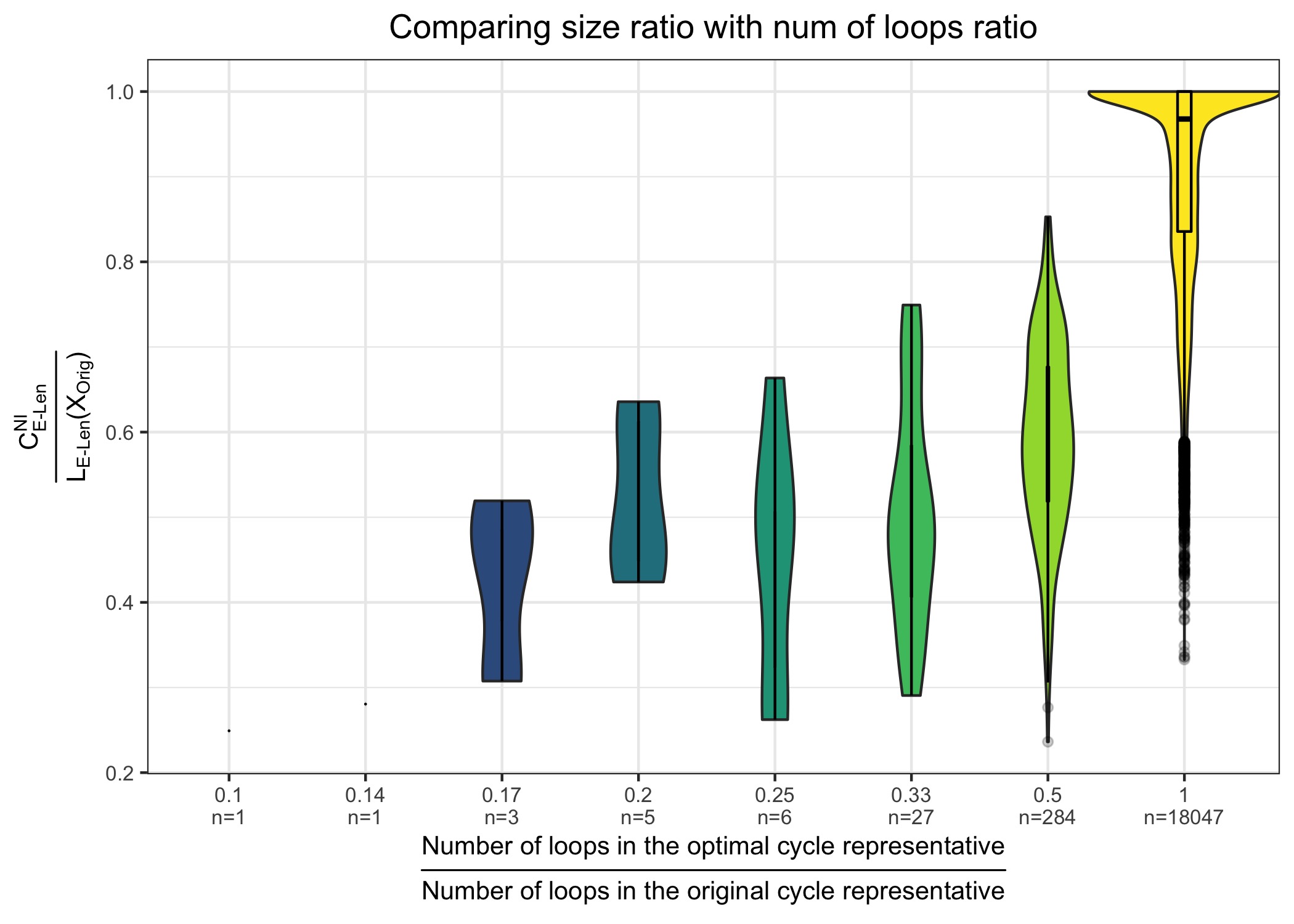}
    \caption{Violin plot of the effectiveness of the optimization as a function of the number of loops in the original cycle representative.  Results are aggregated over the data sets from \se \ref{sec: realworlddata} and \ref{sec: randompointclouds}. The $x$-axis shows the size reduction in terms of number of loops, and the $y$-axis shows the size reduction in terms of the length of the cycle. We see that in general, the reduction in size of the original cycle mostly comes from the reduction in the number of loops by the optimization. } 
    \label{fig:reductioncompare}
\end{figure}
 
\vspace{.1in}
\noindent \emph{Overall effectiveness of optimization ($L_\bullet(\optimalrep_\bullet^*)$ vs. $L_\bullet(\originalrep)$)} 

We compare the optimal representatives against the original cycle representatives\footnote{The remainder of this subsection excludes the Erd\H{o}s-R\'enyi cycles.} with respect to edge-loss functions $L\EU$ and $L\EL$. As shown in \fig \ref{fig:effectivenessall}, we find that the optimizations are in general effective in reducing the size of the cycle representative, especially for representatives with larger size. On each of the subfigures, the horizontal axis is the size of the original cycle representative and the vertical axis is the ratio between the loss of each optimal representative and the loss of the original representative:
$$\frac{C^*_\bullet}{L_\bullet(\originalrep)}.$$

\begin{figure}[]
\begin{center}
\includegraphics[width=0.95\textwidth]{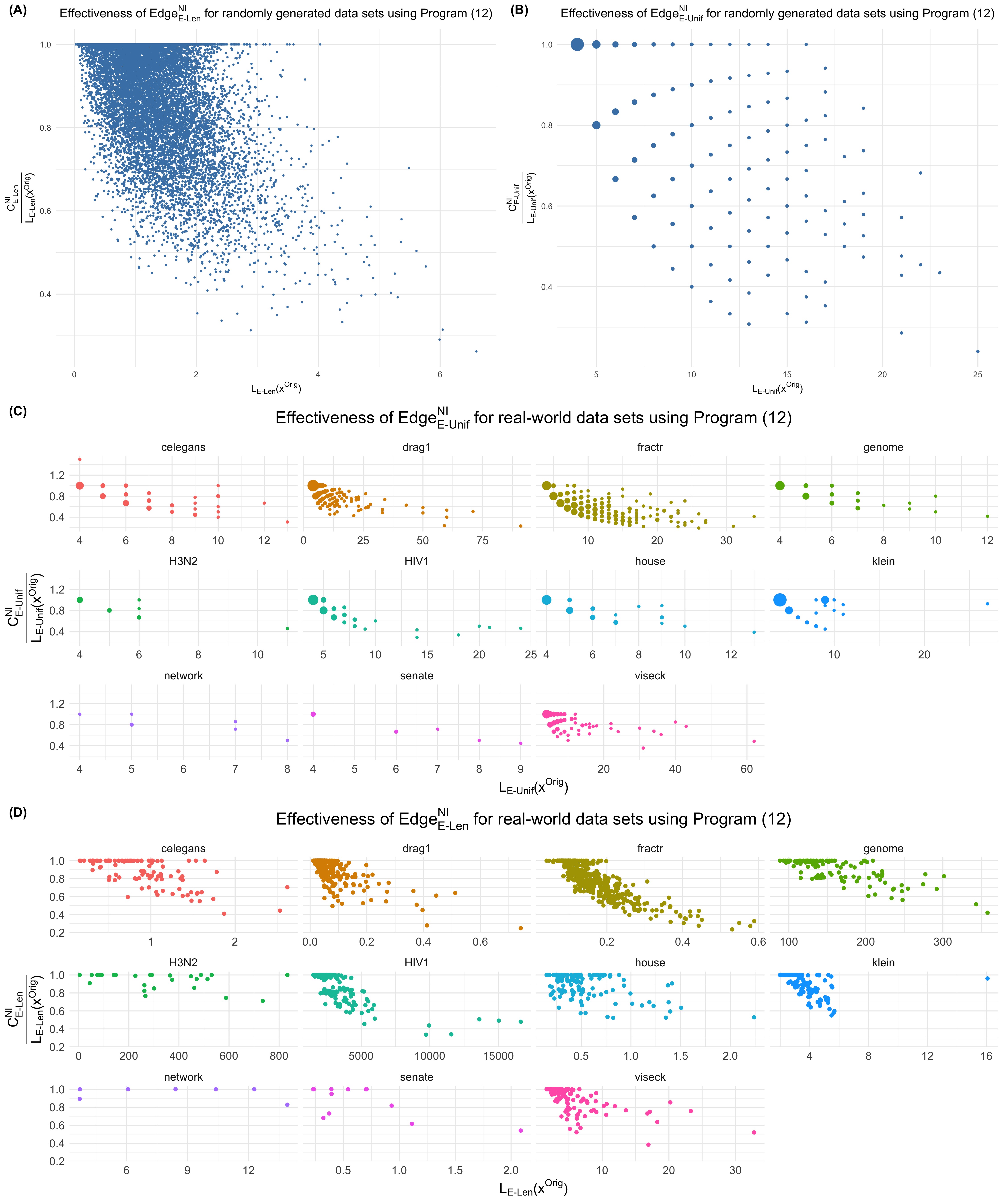} 
\end{center}
\caption{The effectiveness of length-weighted and uniform-weighted optimization for the data sets in Sections \ref{sec: realworlddata} and \ref{sec: randompointclouds} in reducing the size of the original cycle representative found by the persistence algorithm. In each subfigure, the horizontal axis is the size of the original representative and the vertical axis is the ratio between the size of the optimal representative and the size of the original representative. The uniform-weighted graphs appear more sparse because reductions in the cost $L\TU(\originalrep)$ can only come in multiples of the reciprocal of the original length. The node size in the uniform-weighted graphs corresponds to the number of overlapping points. 
}\label{fig:effectivenessall} 
\end{figure}

The average ratio $\frac{C\NI\EU}{L\EU(\originalrep)}$ is $83.17\%$, aggregated over cycle representatives obtained from data described in \se \ref{sec: realworlddata} and $90.35\%$  aggregated over cycle representatives obtained from data described in \se \ref{sec: randompointclouds} for cycles obtained from \pr \eqref{eq:edgelossgeneral}. The average ratio $\frac{C\NI\EL}{L\EL(\originalrep)}$ is $87.02\%$ over cycle representatives obtained from data described in \se \ref{sec: realworlddata} and $90.41\%$ over cycle representatives obtained from data describedra \se \ref{sec: randompointclouds} for cycles obtained from Program \eqref{eq:edgelossgeneral}. The average ratio $\frac{C\NI\TU}{L\TU(\originalrep)}$ is $88.34\%$ over cycle representatives obtained from data described in \se \ref{sec: realworlddata} and $95.54\%$ over cycle representatives obtained from data described in \se \ref{sec: randompointclouds} for cycles obtained from Program \eqref{eq:trianglelossgeneral}.

\noindent \emph{Comparing solutions to  integral programs and non-integral programs ( $\optimalrep^{NI}_\bullet$ vs. $\optimalrep^{I}_{\bullet}$)}

Among all cycle representatives found by solving \pr \eqref{eq:edgelossgeneral}, $66.38\%$ of them have $\optimalrep^{NI}\EU = \optimalrep^{I}\EU$, and  $99.51\%$ of them have $\optimalrep^{NI}\EL = \optimalrep^{I}\EL$. We find $\optimalrep^{NI}\TU = \optimalrep^{I}\TU$ for $74.27\%$ of the cycle representatives and $\optimalrep^{NI}\TA = \optimalrep^{I}\TA$ for $100\%$ of the cycle representatives when using the triangle-loss \pr \eqref{eq:trianglelossgeneral}. Thus, the presence or absence of integer constraints rarely impacts the result of an area- or length-weighted program, but often impacts solutions of a uniform-weighted program. We saw in \se \ref{coefficient} that essentially all solutions had coefficients in $\{-1, 0, 1\}$ regardless of integer or non-integer constraints. As such, we conjecture that the higher rate of different solutions in the uniform-weighted problems could result from a larger number of distinct optimal solutions and be a feature of particular choice of solution selected by the linear solvers, rather than the non-existence of a particular integer solution.

\noindent \emph{Cycle representative size for different distributions and dimensions}

\fig \ref{fig:gen_num_breakdown} provides a summary of the size and number of cycle representatives found for each distribution data set described in Section \ref{sec: randompointclouds}. We observe that there tend to be more and larger (with respect to $\ell_0$ norm) representatives in higher dimensions.
\\
\\
\noindent \emph{Duplicate intervals in the barcode}
\label{duplicate intervals}

Of all data sets analyzed, only \textbf{Klein} and \textbf{C.elegans} have barcodes in which two or more intervals had equal birth and death times (that is, bars with multiplicity $\ge 2$). Among the $107$ total intervals of the \textbf{C.elegans} data set, there are $75$ unique intervals, $10$ intervals with multiplicity two, and one interval each with multiplicity three, four, and five. The duplicate bars in the \textbf{C.elegans} data set are noteworthy for having produced the sole example of an optimized cycle representative $\optimalrep\NI\EU$ with coefficients outside $\{-1, 0, 1\}$ (in particular, it had coefficients in $\{-0.5, 0.5\}$). 

 \begin{figure}[]
\begin{center}
\includegraphics[width=0.9\textwidth]{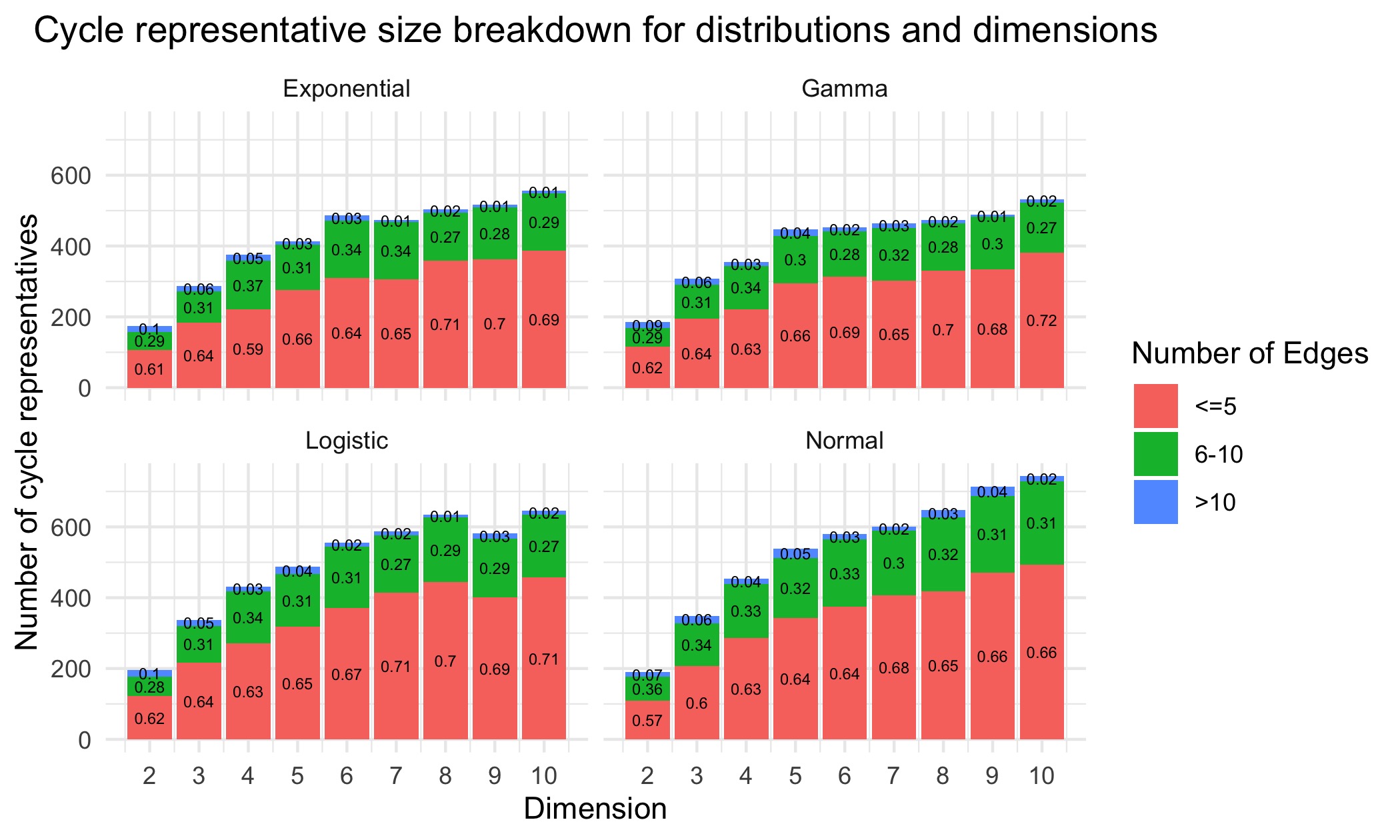} 
\end{center}
\caption{The number of original cycle representatives and the number of edges within each original representative for data described in \se \ref{sec: randompointclouds}. These plots aggregate all cycle representatives for each dimension of a particular distribution. The horizontal axis for each subplot is the dimension of the data set, and the vertical axis is the number of cycle representatives found in each dimension. In general, we see there are more cycle representatives in higher dimensional data sets. Each bar is partitioned by the number of edges of the representative. We observe that as dimension increases, there tend to be more cycle representatives with more edges. 
}\label{fig:gen_num_breakdown}
\end{figure}

Among the $257$ total intervals of the \textbf{Klein} data set, there are $179$ unique intervals, $1$ interval that is repeated twice, and $2$ intervals that are repeated $38$ times. For the \textbf{Klein} data set, if we replace the distance matrix provided by \cite{roadmap2017} with the Euclidean distance matrix calculated using Julia (the maximum difference between the two matrices is on the scale of $10^{-5}$), we obtain only one interval that is repeated twice. This indicates that duplicate intervals are rare in practice, at least in dimension 1. 

\vspace{.1in}
 \noindent \emph{Edge-loss cycle representatives $\setoffilteredcyclebases$ vs. $\setofpersistenthcyclebases$}

We find that for $84.52\%$ of \ref{itm:edge_NIU}, $90.84\%$ of \ref{itm:edge_IU}, $93.49\%$ of \ref{itm:edge_NIL}, and $93.49\%$ of \ref{itm:edge_IL}, the $\setoffilteredcyclebases$ edge-loss cycle representatives found by \pr \eqref{eq:escolarargmin} and the $\setofpersistenthcyclebases$ edge-loss cycles from \pr \eqref{eq:edgelossgeneral} are the same, i.e. the $\ell_1$ norm of their difference is $0$. As mentioned in Remark \ref{rmk:filteredversuspersistent}, the $\setoffilteredcyclebases$ cycles may not have the same death time as $\originalrep$. For the real-world data sets, $6.72\%$ of the $\eqref{itm:edge_NIL}$ and $\eqref{itm:edge_IL}$,  $7.65\%$ of the $\eqref{itm:edge_NIU}$ and $4.48\%$ of $\eqref{itm:edge_IU}$ have lifetimes different than $\originalrep$. For the randomly generated distribution data sets, $7.11\%$ of the $\eqref{itm:edge_NIL}$ and $\eqref{itm:edge_IL}$,  $8.06\%$ of the $\eqref{itm:edge_NIU}$ and $4.25\%$ of $\eqref{itm:edge_IU}$ have lifetimes different than $\originalrep$. All cycle representatives with lifetimes different than $\originalrep$ have death time beyond that of $\originalrep$.

\subsection{Optimal cycle representatives for Erd\H{o}s-R\'enyi random clique complexes}
\label{sec:erdosbehavior}

We observe qualitatively different behavior in cycle representatives from the Erd\H{o}s-R\'enyi random clique complexes.
Among the $34{,}214$ cycle representatives from the 100 dissimilarity matrices found by solving Programs \eqref{eq:edgelossgeneral} and \eqref{eq:trianglelossgeneral}, we find that $91.04\%$ of the original cycle representatives have entries in $\{-1,0,1\}$ and $99.75\%$ of the original cycle representatives have integral entries. We have $3.89\%$ of the length-weighted edge-loss representatives, $4.49\%$ of the uniform-weighted edge-loss representatives, and $3.52\%$ of the uniform-weighted triangle-loss representatives with entries not in $\{-1,0,1\}$. We find $2.66\%$ of the length-weighted edge-loss representatives, $3.57\%$ of the uniform-weighted edge-loss representatives, and $1.58\%$ of the uniform-weighted triangle-loss representatives with non-integral entries when not requiring integral solutions.

We find $\frac{L_{E-Unif}(\optimalrep^{NI}_{E-Unif})}{L_{E-Unif}(\optimalrep^{I}_{E-Unif})}> 1$ for $1.07\%$ of the cycle representatives and $\frac{L_{E-Len}(\optimalrep^{NI}_{E-Len})}{L_{E-Len}(\optimalrep^{I}_{E-Len})}> 1$ for $1.09\%$ of the representatives. All such representatives have entries outside of $\{-1,0,1\}$ and involve some fractional entries. An average of $96.75\%$ of the nonzero entries in the reduced boundary matrices are in $\{-1,1\}$, $2.15\%$ in $\{-2,2\}$, and $0.27\%$ with an absolute value greater than or equal to $3$. 

Because of the non-integrality of some original cycle representatives found by the persistence algorithm, we fail to find an integral solution for $0.27\%$ of the edge-loss representatives and $0.11\%$ of the triangle-loss representatives. 

A partial explanation for this  behavior can be found in the work of Costa et al. \cite{erdos-renyi-RP2}. Here, the authors show that a connected two-dimensional simplicial complex for which every subcomplex has fewer than three times as many edges as vertices must have the homotopy type of a wedge of circles, 2-spheres, and real projective planes. With high probability, certain ranges of thresholds for the i.i.d. dissimilarity matrices on which the Erd\H{o}s-R\'enyi random complexes are built produces random complexes with approximately such density patterns at each vertex. Thus, some of the persistent cycles are highly likely to correspond to projective planes. Because of their non-orientability, the corresponding minimal generators could be expected to have entries outside of the range $\{-1, 0, 1\}.$

 \section{Conclusion}\label{discussion}

In this work, we provide a theoretical, computational, and empirical user's guide to optimizing cycle representatives against four criteria of optimality: total length, number of edges, internal volume, and area-weighted internal volume. Utilizing this framework, we undertook a study on statistical properties of minimal cycle representatives for $\Homologies_1$ homology found via linear programming. In doing so, we made the following four main contributions.
\begin{enumerate}
    \item We developed a publicly available code library \cite{li_thompson} to compute persistent homology with rational coefficients, building on the software package Eirene \cite{eirene} and implemented and extended algorithms from \cite{Escolar2016, Obayashi2018}  for computing minimal cycle representatives. The library employs standard linear solvers (GLPK and Gurobi) and implements various acceleration techniques described in \se \ref{acceleratation technique} to make the computations more efficient. 
    \item We formulated specific recommendations concerning procedural factors that lie beyond the scope of the optimization problems per se (for example, the process used to generate inputs to a solver) but which bear directly on the overall cost of computation, and of which practitioners should be aware. 
    \item We used this library to compute optimal cycle representatives for a variety of real-world data sets and randomly generated point clouds.  Somewhat surprisingly, these experiments demonstrate that computationally advantageous properties are typical for persistent cycle representatives in data. Indeed, we find that we are able to compute uniform/length-weighted optimal cycles for all data sets we considered, and that we are able to compute triangle-loss optimal cycles for all but six cycle representatives, which fail due to the large number of triangles (more than $20$ million) used in the optimization problem. Computation time information is summarized in \tab \ref{tab:realworldata} and \tab \ref{tab:distributiondata}. 
    
    Consequently, heuristic techniques may provide efficient means to extract solutions to cycle representative optimization problems across a broad range of contexts. For example, we find that edge-loss optimal cycles are faster to compute than triangle-loss optimal cycles for cycle representatives with a longer persistence interval, whereas for cycles with shorter persistence intervals, the triangle-loss cycle can be less computationally expensive to compute.
    
    \item We provided statistics on various minimal cycle representatives found in these data, such as their effectiveness in reducing the size of the original cycle representative found by the persistence algorithm and their effectiveness evaluated against different loss functions. In doing so, we identified consistent trends across samples that address the questions raised in \se \ref{intro}.
    \begin{enumerate}
        \item Optimal cycle representatives are often significant improvements in terms of a given loss function over the initial cycle representatives provided by persistent homology computations (typically, by a factor of 0.3 to 1.0). Interestingly, we find that area-weighted triangle-loss optimal cycle representatives can enclose a greater area than length- or uniform-weighted optimal cycle representatives.  
        \item We find that length-weighted edge-loss optimal cycles are also optimal with respect to a uniform-weighted edge-loss function upwards of $99\%$ of the time in the data we studied. This suggests that one can often find a solution that is both length-weighted minimal and uniform-weighted minimal by solving only the length-weighted minimal optimization problem. However, the triangle-loss optimal cycles can have a relatively higher length-weighted edge-loss or uniform-weighted edge-loss than the length/uniform-weighted minimal cycles. Thus, computing triangle-loss optimal cycles might provide distinct information and insights. 
        \item Strikingly, all but one $\ell_1$ optimal representatives were also $\ell_0$ optimal (that is, $\ell_0$ optimal among cycles taking $\{0,1,-1\}$ coefficients;  $\ell_0$ optimality among cycles taking $\Z$ coefficients was not tested) in the real-world and synthetic point cloud data. Thus, it appears that solutions to the NP-hard problem of finding $\ell_0$ optimal cycle representatives can often be solved using linear programming in real data. In the Erd\H{o}s-R\'enyi random complexes, qualitatively different behavior was found; this may relate to the fact that spaces in this random family contain non-orientable subcomplexes with high probability.
    \end{enumerate}

 Several questions lie beyond the scope of this text and merit future investigation.  For example, while the methods discussed in \se \ref{sec:programsandmethods} apply equally to homology in any dimension, we have focused our empirical investigation exclusively in dimension one; it would be useful and interesting to compare these results with homology in higher dimensions.  It would likewise be interesting to compare with different weighting strategies on simplices, and loss functions other than $\ell_0$ and $\ell_1$, e.g. $\ell_2$.  Future work may also consider whether the modified approach to edge-loss minimization \pr \eqref{eq:edgelossgeneral} could be incorporated into persistence solvers themselves, as pioneered in \cite{Escolar2016}.  Unlike the programs formulated in this earlier work, \pr \eqref{eq:edgelossgeneral} requires information about the death times of cycles in addition to their births; typically this information is not available until after the persistence computation has already finished, so new innovations would probably be needed to make progress in this direction.

\end{enumerate}



\section{Conflict of Interest Statement} 
The authors declare that the research was conducted in the absence of any commercial or financial relationships that could be construed as a potential conflict of interest.

\section{Author Contributions}

G.H-P. wrote the Eirene code. L.L. and C.T. wrote the rest of the code and performed all experiments. L.L. created all figures and tables. G.H-P. and L.Z. designed, directed, and supervised the project. G.H-P. developed the theory in the Supplementary Material. All authors contributed to the analysis of the results and to the writing of the manuscript.

\section{Funding}
This material is based upon work supported by the National Science Foundation under grants no. DMS-1854683, DMS-1854703, and DMS-1854748. 

\section{Acknowledgments} 
The authors are grateful to David Turner for helpful advice on selection of linear solvers. They also thank and acknowledge the initial work done by Robert Angarone and Sophia Wiedemann on this study.

 \section{Supplemental Data}

Details regarding implementation and proofs of correctness for the algorithms discussed in \se \ref{sec:programsandmethods} as well as an additional figure and table may be found in the Supplementary Material.
 
\section{Data Availability Statement}
The data sets and code for this study can be found in the repository \cite{li_thompson}.

{\small
	\bibliographystyle{abbrv}
\bibliography{main}
}

\newpage

\appendix

\onecolumn

\maketitle

\section{Review: Standard algorithm to compute persistent homology cycle bases, by matrix decomposition}
\label{sec:computingcyclereps}

The first step in Algorithms 1 and 2 is to compute a persistent homology cycle basis.  The standard method to compute such a basis invokes a so-called $R = DV$ decomposition  of the boundary matrices $\partial_n$ \cite{cohen2006vines}.  Here we  provide a brief review of this process; further details may be found in \cite{cohen2006vines, SMVDualities11}.
  
  To begin, we must place a total order on each set $\Simplices_n(K)$, in ascending order of birth.  This naturally allows us to regard $\partial_n$ as an element of $G^{|\Simplices_{n-1}(K)| \times |\Simplices_{n}(K)|}$.  The \emph{low}  function on a matrix $A \in G^{k \times l}$ is defined by
    \begin{align*}
        \low: \underbrace{\{j : A[:,j] \neq 0 \}}_{\mathrm{domain}(\low)} \to \Z, \quad j \mapsto \max \{i : A[i,j] \neq 0 \}.
    \end{align*}
We say that $A$ is \emph{reduced} if $\low$ is injective.  An $R = DV$ decomposition is a matrix equation where $R$ is reduced and $V$ is invertible and upper triangular.

Suppose that $R_n = \partial_n V_n$ is such a decomposition for each $n$.   Let $\low^n$ be the low function of $R_n$, and let $\Gamma_n = \{(j, \low(j)) : R[:,j] \neq 0\}$ be the graph $\low^n$.  It can then be shown (\cite{cohen2006vines, SMVDualities11}) that
    \begin{enumerate}
        \item Each set $\Simplices_n(K)$ partitions into three disjoint subsets: $B_n \sqcup B^*_n \sqcup H_n$, where $B_n$ is the image of $\low^{n+1}$ and $B^*_n$ is the domain of $\low^n$.
        \item If $B_n^*(t)$ denotes the set of simplices in $B_n$ born by time $t$, then $\partial_n[:, B^*_n(t)]$ is a basis for the space of boundaries $\Boundaries_n(K_t)$.
        \item Let $\hatgraph_n$ denote the subset of $\Gamma_n$ consisting of those pairs $(\sigma, \tau)$ such that $\birth(\sigma) \neq \birth(\tau)$, and let $E_n = \{ \tau : (\sigma, \tau) \in \hatgraph \} \cup H_n$.  Then $V_n[:,H_n] \cup R_n[:,E_n] $ is a persistent homology cycle basis.  The lifespan of $V_n[:, \sigma]$ is $[\birth(\sigma), \infty)$ and the lifespan of $R_n[:, \tau]$ is $[\birth(\sigma), \birth(\tau))$, where $\sigma$ is the unique simplex such that $(\sigma, \tau) \in \Gamma$.  
        \item In particular, the barcode of $K_\bullet$ may be read off from the $R = DV$ decompositions.
    \end{enumerate}

\section{Correctness of Algorithms 1 and 2}

Here we provide proofs of correctness for Algorithms 1 and 2.  As the details are primarily technical in nature, the exposition is fairly terse.  The arguments are primarily self-contained, however we begin by recalling one result from \cite{eirene}, which will be used in the proof of Algorithm 1.

\subsection{Review: characterization of persistent homology bases}

\newcommand{\ei}{{\epsilon_i}}
\newcommand{\eineg}{{\epsilon_{i-1}}}
\newcommand{\ej}{{\epsilon_j}}
\newcommand{\ejneg}{{\epsilon_{j-1}}}
\newcommand{\xij}{X^{i, j}}
\newcommand{\yij}{Y^{i, j}}
\newcommand{\Qij}{Q^{i,j}}
\newcommand{\qij}{q^{i, j}}
\newcommand{\eij}{E^{i,j}}

We will invoke a result from \cite{eirene} and \cite{henselman2017} which requires one new definition and one new notational convention.  For notation, let $n$ be given, fix $\ei < \ej$, and set
    \begin{align*}
        \xij: = \Cycles_n(K_\ei) \cap \Boundaries_n(K_\ej)
        &&
        \yij: = \Cycles_n(K_\eineg) + \Boundaries_n(K_\ejneg)
        &&
        \Qij: = \frac{\xij}{\xij \cap \yij}
    \end{align*}
We then define $\qij$ as the quotient map 
    \begin{align*}
        \qij:   \xij \to \Qij 
    \end{align*}

\begin{definition}
A subset $E \subseteq \xij$  is an \emph{$(i,j)$-basis} if $\qij|_E$ is injective and $\qij(E)$ is a basis of $\Qij$.  
\end{definition}



\begin{theorem}[\cite{eirene, henselman2017}]
\label{thm:phindependencecharacterization}
A family of $n$-cycles $E$ is a persistent homology cycle basis iff
    \begin{align*}
        E^{i,j} := E \cap (\xij - \yij)
    \end{align*}
is an $(i,j)$ basis for all $i$ and $j$.
\end{theorem}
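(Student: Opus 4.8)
The plan is to reduce both implications to a single dimension lemma. Fix the homological dimension $n$ and abbreviate $Z_a = \Cycles_n(K_{\epsilon_a})$, $B_a = \Boundaries_n(K_{\epsilon_a})$, with $Z_0 = B_0 = 0$; for the infinite bars set $X^{i,\infty} = Z_i$ and $Y^{i,\infty} = Z_{i-1} + \Boundaries_n(K)$. Writing $X^{a,b} = Z_a \cap B_b$ for $a \le b \le T$ (so $X^{i,j}$ agrees with the statement and $Y^{i,j} = Z_{i-1} + B_{j-1}$), the lemma asserts that $\dim Q^{i,j}$ equals the multiplicity $\mu_i^j$ of $[\epsilon_i,\epsilon_j)$ in $\barcode_n$, for every $i < j$ (finite or $j = \infty$). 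I would prove it by the modular law: from $Z_{i-1} \subseteq Z_i$ and $B_{j-1} \subseteq B_j$ one gets $X^{i,j} \cap Y^{i,j} = (Z_{i-1} \cap B_j) + (Z_i \cap B_{j-1}) = X^{i-1,j} + X^{i,j-1}$, with pairwise intersection $X^{i-1,j-1}$, so $\dim Q^{i,j} = \dim X^{i,j} - \dim X^{i-1,j} - \dim X^{i,j-1} + \dim X^{i-1,j-1}$; since $\mathrm{im}(\Homologies_n(K_{\epsilon_a}) \to \Homologies_n(K_{\epsilon_b})) \cong Z_a / X^{a,b}$, this is exactly the standard inclusion--exclusion formula for barcode multiplicities \cite{edelsbrunner2010computational}. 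The case $j = \infty$ is the same computation with $\Boundaries_n(K)$ in place of $B_j$.

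For the forward implication, let $E$ be a persistent homology cycle basis. A membership computation shows that a nonzero cycle $z$ with $\persinterval(z) = [\epsilon_a,\epsilon_b)$ lies in $X^{i,j}$ iff $a \le i$ and $b \le j$, and lies in $Z_{i-1}$ or in $B_{j-1}$ iff $a < i$ or $b < j$; hence $E^{i,j} := E \cap (X^{i,j} \setminus Y^{i,j}) \subseteq \{z \in E : \persinterval(z) = [\epsilon_i,\epsilon_j)\}$. The reverse inclusion is the one place the basis hypothesis enters: if $\persinterval(z) = [\epsilon_i,\epsilon_j)$ but $z = z' + b'$ with $z' \in Z_{i-1}$, $b' \in B_{j-1}$, then $[z] = [z']$ in $\Homologies_n(K_{\epsilon_{j-1}})$ lies in the image of $\Homologies_n(K_{\epsilon_{i-1}})$, i.e.\ in the span of the classes of those $w \in E$ alive at $\epsilon_{j-1}$ with $\birth(w) \le \epsilon_{i-1}$, contradicting linear independence of $\{[w] : w \in E,\ \epsilon_{j-1} \in \persinterval(w)\}$ since $z$ is itself alive at $\epsilon_{j-1}$ and born at $\epsilon_i > \epsilon_{i-1}$. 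Thus $E^{i,j}$ is exactly the set of $z \in E$ with lifespan $[\epsilon_i,\epsilon_j)$, of cardinality $\mu_i^j = \dim Q^{i,j}$ because the multiset of lifespans of a persistent homology cycle basis is $\barcode_n$ \cite{zomorodiancarlssoncomputingph}. Running the same independence argument on a relation $\sum_{z \in E^{i,j}} \lambda_z z \in X^{i,j} \cap Y^{i,j}$ shows $q^{i,j}$ is injective on $E^{i,j}$ with linearly independent image, so with the cardinality match $E^{i,j}$ is an $(i,j)$-basis; the same works for the infinite bars.

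For the converse, assume each $E^{i,j}$ is an $(i,j)$-basis. The membership computation again gives $\persinterval(z) = [\epsilon_i,\epsilon_j)$ for every $z \in E^{i,j}$, so these elements have nonempty lifespan and the $E^{i,j}$ are pairwise disjoint; combined with the bookkeeping identity $|E| = \sum_{i<j} \dim Q^{i,j} = |\barcode_n|$ this forces $E = \bigsqcup_{i<j} E^{i,j}$, giving property (1) of a persistent homology cycle basis. For property (2), fix $i$; then $E_{\epsilon_i} = \bigsqcup_{a \le i < b} E^{a,b}$ has cardinality $\sum_{a \le i < b} \dim Q^{a,b} = \sum_{a \le i < b} \mu_a^b = \beta_n(K_{\epsilon_i})$, so it suffices to show $\{[z] : z \in E_{\epsilon_i}\}$ is linearly independent in $\Homologies_n(K_{\epsilon_i})$. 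I would prove this by downward induction on the largest death index $\epsilon_m$ carrying a nonzero coefficient in a hypothetical relation $\sum_{z \in E_{\epsilon_i}} \lambda_z z \in B_i$: the partial sum over $z$ with $\death(z) = \epsilon_m$ and largest birth index $\epsilon_{a^*}$ lies in $Z_{a^*} \cap B_{m-1} = X^{a^*,m-1}$, hence maps to $0$ under $q^{a^*,m}$, which forces those coefficients to vanish because $q^{a^*,m}(E^{a^*,m})$ is a basis of $Q^{a^*,m}$; iterating removes all terms, the infinite-death terms being handled in the same way using the $j = \infty$ convention.

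The step I expect to be the real obstacle is keeping this linear algebra honest: both the modular-law identity for $X^{i,j} \cap Y^{i,j}$ and the claim that $q^{i,j}$ separates the lifespan-$[\epsilon_i,\epsilon_j)$ elements of $E$ depend on carefully tracking how the shifted-index spaces $Z_{i-1}$ and $B_{j-1}$ sit inside $X^{i,j}$, and the downward induction in the converse must be phrased so that the relevant partial sums genuinely land in $X^{a^*,m}$ and $X^{a^*,m-1}$ even when individual cycles do not (this is delicate for infinite-death terms). The exhaustion $E = \bigsqcup_{i<j} E^{i,j}$ and the $j = \infty$ conventions are routine but must be stated. All of this is a recasting, in the notation of the present excerpt, of the argument of \cite{eirene, henselman2017}.
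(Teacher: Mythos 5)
First, a point of reference: the paper does not prove this theorem at all --- it is imported verbatim from \cite{eirene, henselman2017} and used as a black box in the correctness argument for Algorithm 1, so your proposal is a reconstruction of the cited result rather than an alternative to an in-paper argument. Most of your reconstruction is sound. The modular-law identity $X^{i,j} \cap Y^{i,j} = X^{i-1,j} + X^{i,j-1}$ does hold (the nonobvious inclusion follows because if $x = u + v$ with $u \in \Cycles_n(K_{\epsilon_{i-1}})$, $v \in \Boundaries_n(K_{\epsilon_{j-1}})$ and $x \in X^{i,j}$, then $v = x - u$ is automatically in $\Cycles_n(K_{\epsilon_i})$ and $u = x - v$ is automatically in $\Boundaries_n(K_{\epsilon_j})$), the resulting alternating sum of the $\dim X^{\cdot,\cdot}$ matches the inclusion--exclusion formula for barcode multiplicities, and the forward implication correctly uses the basis hypothesis to rule out a lifespan-$[\epsilon_i,\epsilon_j)$ basis element lying in $\Cycles_n(K_{\epsilon_{i-1}}) + \Boundaries_n(K_{\epsilon_{j-1}})$.

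The gap is in the converse, at the sentence claiming that ``the bookkeeping identity $|E| = \sum_{i<j} \dim Q^{i,j}$'' forces $E = \bigcup_{i<j} E^{i,j}$. The hypothesis only gives $|E^{i,j}| = \dim Q^{i,j}$ for each pair, hence a lower bound $|E| \ge \sum_{i<j} \dim Q^{i,j}$; nothing bounds $|E|$ from above or guarantees that every element of $E$ lands in some $X^{i,j} \setminus Y^{i,j}$. This is not merely a presentational issue: a cycle $\cycle$ with lifespan $[\epsilon_a,\epsilon_b)$ lies in no $X^{i,j}\setminus Y^{i,j}$ whenever it decomposes as $\cycle' + \cycleb'$ with $\cycle'$ a cycle born strictly before $\epsilon_a$ and $\cycleb'$ a boundary dying strictly before $\epsilon_b$ (and the zero cycle, with empty lifespan, is always such an element). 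Adjoining such a $\cycle$ to a genuine persistent homology cycle basis leaves every $E^{i,j}$ unchanged --- so the right-hand side of the equivalence still holds --- while destroying either the nonempty-lifespan condition or the linear independence of the classes alive at $\epsilon_a$. So the converse requires either the explicit additional hypothesis $E \subseteq \bigcup_{i<j}\left(X^{i,j} \setminus Y^{i,j}\right)$ or some cardinality control on $E$; the cited sources build this into their setup, and your proof needs to state and use it rather than assert it as bookkeeping. Granting that exhaustion, the remainder of your converse (the downward induction on the largest death index, peeling off coefficients using that $q^{a^*,m}(E^{a^*,m})$ is linearly independent in $Q^{a^*,m}$) does go through.
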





\begin{remark}
A special case of Theorem \ref{thm:phindependencecharacterization} (for simplex-wise filtrations) also appeared in \cite[Theorem 1]{wu}.  This construction is also closely related to that of \emph{pair groups}, c.f.  \cite{persistenthomologyasurvey}.
\end{remark}    

\subsection{Correctness of Algorithm \ref{alg:edge}}

We restate Algorithm \ref{alg:edge} for ease of reference.

\begin{algorithm}
\caption{Edge-loss persistent cycle minimization}
\label{alg:edge}
\begin{algorithmic}[1]
\STATE Compute a persistent homology basis $\hcyclebasis$ for homology in dimension 1, with coefficients in $\Q$,  using the standard matrix decomposition procedure described in the Supplementary Material. Arrange the elements of $\hcyclebasis$ into an ordered sequence $\obasis^0 = (\obasisel^{0,1}, \ldots, \obasisel^{0,m})$.
\FOR{$j = 0, \ldots, m-1$}
\STATE Solve Program \eqref{eq:edgelossgeneral} to optimize the $j+1$th element of $\obasis^{j}$.  Let $\optimalrep$ denote the solution to this problem, and define $\obasis^{j+1}$ by replacing the $j+1$th element of $\obasis^{j}$ with $\optimalrep$.  Concretely, $\obasisel^{j+1,j+1} = \optimalrep$, and $\obasisel^{j+1,k} = \obasisel^{j,k}$ for $k \neq j$.
\ENDFOR
\STATE Return $\hcyclebasis^*: = \{\obasisel^{m,1}, \ldots, \obasisel^{m,m}\}$, the set of elements in $\obasis^m$.
\end{algorithmic}
\end{algorithm}

Recall that Program \eqref{eq:edgelossgeneral} optimizes the $j$th element of an ordered sequence of cycle representatives $\obasis = (\obasisel^1, \ldots, \obasisel^m)$.  In particular, it seeks to minimize $\originalrep := \cycle^j$.  To define this program, we first construct a matrix $A$ such that $A[:, i] = \cycle^i$ for $i = 1, \ldots, m$.  We then define  three index sets, $\goodcycleindices, \goodtriangles, \goodedges$ such that 
    \begin{align*}
        \goodcycleindices &= \{ i :  \birth(\cycle^i) \le \birth(\originalrep), \;  \death(\cycle^i) \le \death(\originalrep), \; i \neq j \} \\
        \goodtriangles &= \{\sigma \in \Simplices_2(K) : \birth(\sigma) \le \birth(\originalrep)\} 
        \\
        \goodedges &= \{\sigma \in \Simplices_1(K) : \birth(\sigma) \le \birth(\originalrep)\}
    \end{align*} 

Program \eqref{eq:edgelossgeneral} can then be defined as follows.

\begin{align}
\begin{split}
    \text{minimize   } & ||W \optimalrep ||_1 = \sum_{i=1}^N  (x^+_i + x_i^-)\\
   \text{subject to  } &  
      (\optimalrep^+ - \optimalrep^- )= \originalrep[\goodedges] +   \partial_2[\goodedges, \goodtriangles]  \q + A[\goodedges, \goodcycleindices] \p \\
      & \p \in \Q^{\goodcycleindices} \\
      & \q \in \Q^{\goodtriangles} \\      
      & \optimalrep \in G^{\goodedges } \\      
      & \optimalrep^+, \optimalrep^- \geq 0 
      \end{split}
      \tag{14}
      \label{eq:edgelossgeneral}
\end{align}

\newcommand{\vanishset}{\mathcal{S}}
\newcommand{\orderset}{\mathcal{T}}

\begin{theorem}
For each $k$, the family of cycles $\{\obasisel^{k,1}, \ldots, \obasisel^{k,m}\}$ constructed in Algorithm \ref{alg:edge} is a persistent homology cycle basis.  Moreover, lifespans are preserved, in the sense that
    \begin{align}
        \persinterval(\obasisel^{0,l}) = \persinterval(\obasisel^{k,l})
        \label{eq:samelifespan}
    \end{align}
for all $k$ and $l$.
\end{theorem}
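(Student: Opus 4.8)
The plan is to induct on $k$, using Theorem~\ref{thm:phindependencecharacterization} as the organizing principle: a family of $n$-cycles is a persistent homology cycle basis if and only if its intersection with each ``window'' $\xij - \yij$ projects to a basis of the quotient $\Qij$. The base case $k=0$ holds by construction (Step 1 of Algorithm~\ref{alg:edge} computes an honest persistent homology cycle basis via the $R=DV$ decomposition, as reviewed in Appendix~A). For the inductive step, I assume $\{\obasisel^{k,1},\dots,\obasisel^{k,m}\}$ is a persistent homology cycle basis with the claimed lifespans and analyze the single swap that produces $\obasis^{k+1}$ from $\obasis^{k}$, namely replacing $\obasisel^{k,k+1} = \originalrep$ with the solution $\optimalrep$ of Program~\eqref{eq:edgelossgeneral}.

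The first thing to establish is that $\optimalrep$ is in fact a cycle with $\persinterval(\optimalrep) = \persinterval(\originalrep)$. From the constraint $\optimalrep = \originalrep[\goodedges] + \partial_{2}[\goodedges,\goodtriangles]\q + A[\goodedges,\goodcycleindices]\p$, the chain $\optimalrep$ is a $\Q$-linear combination of $\originalrep$, boundaries of triangles born by $\birth(\originalrep)$, and cycles $\cycle^i$ with $\birth(\cycle^i)\le\birth(\originalrep)$ and $\death(\cycle^i)\le\death(\originalrep)$; hence $\optimalrep$ is a cycle, $\optimalrep\in\Cycles_n(K_{\birth(\originalrep)})$, and $\birth(\optimalrep)\le\birth(\originalrep)$. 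For the reverse inequality on birth and for the death equality I would argue modulo $\yij$: the correction terms $\partial_2 \q$ and $\sum_{i\in\goodcycleindices} p_i \cycle^i$ all lie in $\Cycles_n(K_{\birth(\originalrep)-}) + \Boundaries_n(K_{\death(\originalrep)-})$ in the appropriate window, so $\optimalrep$ and $\originalrep$ have the same image under $q^{i,j}$ for the window $(i,j)$ corresponding to $\persinterval(\originalrep)$, forcing $\persinterval(\optimalrep) = \persinterval(\originalrep)$. This gives \eqref{eq:samelifespan} for $l = k+1$; for $l\ne k+1$ it is immediate since those entries are unchanged.

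Next I verify the basis property via Theorem~\ref{thm:phindependencecharacterization}. For each window $(i,j)$, the set $\{\obasisel^{k,l}\}^{i,j}$ is an $(i,j)$-basis by the inductive hypothesis. When I replace $\originalrep$ by $\optimalrep$, only the window $(i^*,j^*)$ matching $\persinterval(\originalrep)$ can be affected (other windows see neither $\originalrep$ nor $\optimalrep$, since both have lifespan exactly $\persinterval(\originalrep)$). In that window, I showed $q^{i^*,j^*}(\optimalrep) = q^{i^*,j^*}(\originalrep)$, so the image of the family under $q^{i^*,j^*}$ is literally unchanged; hence it remains a basis of $\Qij[i^*][j^*]$ and $q^{i^*,j^*}$ remains injective on the new family. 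So $\{\obasisel^{k+1,l}\}$ is a persistent homology cycle basis, completing the induction.

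The main obstacle I anticipate is bookkeeping around the correction term $A[\goodedges,\goodcycleindices]\p$: the cycles $\cycle^i$ indexed by $\goodcycleindices$ are themselves elements of the \emph{current} basis $\obasis^k$, some of which may already have been optimized in earlier iterations. I need to be careful that adding $\sum_{i\in\goodcycleindices} p_i\cycle^i$ to $\originalrep$ genuinely keeps us within the same coset modulo $\yij[i^*][j^*]$ — i.e., that each such $\cycle^i$ lies in $Y^{i^*,j^*} = \Cycles_n(K_{\birth(\originalrep)-}) + \Boundaries_n(K_{\death(\originalrep)-})$. This follows from the defining conditions $\birth(\cycle^i)\le\birth(\originalrep)$ and $\death(\cycle^i)\le\death(\originalrep)$ together with the strict inequalities built into the windows, but making the indexing precise (distinguishing $\le$ vs.\ $<$ at birth/death, and handling the case $\birth(\cycle^i) = \birth(\originalrep)$) is where the argument requires genuine care rather than routine calculation. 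A secondary technical point is confirming feasibility of Program~\eqref{eq:edgelossgeneral} — taking $\q = 0$, $\p = 0$, $\optimalrep = \originalrep[\goodedges]$ shows $\originalrep$ itself is feasible, so a minimizer exists.
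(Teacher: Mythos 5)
Your overall strategy --- induction on $k$, reducing the single swap to a statement about $(i,j)$-bases via Theorem \ref{thm:phindependencecharacterization} --- is exactly the paper's, but there is a genuine gap at the step you yourself flag as the main obstacle, and the resolution you sketch there is wrong. You claim that every correction cycle $\cycle^i$ with $i \in \goodcycleindices$ lies in $Y^{i^*,j^*} = \Cycles_n(K_{\epsilon_{i^*-1}}) + \Boundaries_n(K_{\epsilon_{j^*-1}})$, so that $q^{i^*,j^*}(\optimalrep) = q^{i^*,j^*}(\originalrep)$ and ``the image of the family is literally unchanged.'' This fails precisely when the barcode contains a bar of multiplicity $\ge 2$: the set $\goodcycleindices$ includes indices $i \neq j$ with $\birth(\cycle^i) = \birth(\originalrep)$ and $\death(\cycle^i) = \death(\originalrep)$, and for such an $i$ the inductive hypothesis together with Theorem \ref{thm:phindependencecharacterization} places $\cycle^i$ in $X^{i^*,j^*} - Y^{i^*,j^*}$ --- if it were in $Y^{i^*,j^*}$ its image under $q^{i^*,j^*}$ would be zero and it could not be part of an $(i^*,j^*)$-basis. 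So the assertion that membership in $Y^{i^*,j^*}$ ``follows from the defining conditions'' is exactly backwards in the equal-lifespan case, and that case is not vacuous: the paper records duplicate bars in the \textbf{C.elegans} and \textbf{Klein} data sets, and the one anomalous representative with coefficients in $\{-0.5,0,0.5\}$ arises from exactly this situation.

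The paper closes the gap by partitioning $\goodcycleindices$ into the indices whose cycles have strictly earlier birth or strictly earlier death (these do land in $X^{i^*,j^*} \cap Y^{i^*,j^*}$, alongside the boundary term $\partial_2[\goodedges,\goodtriangles]\q$) and the indices with lifespan identical to that of $\originalrep$. The latter contribute a term $\cyclew$ lying in the span of the \emph{other} same-lifespan basis elements, so $q^{i^*,j^*}(\optimalrep) = q^{i^*,j^*}(\originalrep) + q^{i^*,j^*}(\cyclew)$ is genuinely different from $q^{i^*,j^*}(\originalrep)$; but replacing one element of a finite set by itself plus a combination of the others preserves the span of the set, hence the image of the new family still spans $Q^{i^*,j^*}$, and a dimension count shows it is still an $(i^*,j^*)$-basis (which also yields injectivity of $q^{i^*,j^*}$ on the new family and recovers $\persinterval(\optimalrep) = \persinterval(\originalrep)$, since the coefficient of $q^{i^*,j^*}(\originalrep)$ in $q^{i^*,j^*}(\optimalrep)$ is $1$). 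Your argument as written is correct only for bars of multiplicity one; to repair it you need this span-plus-dimension-count step in place of the ``image unchanged'' claim. Your secondary points (feasibility via $\p = \q = 0$, and that windows other than $(i^*,j^*)$ are unaffected) are fine.
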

\begin{proof}
We proceed by induction on $k$, the base case $k = 0$ begin clear.  Assume the desired conclusion holds for $k$.   For ease of reference, put
    \begin{align*}
        \obasis := (\obasisel^1, \ldots, \obasisel^m) := (\obasisel^{k,1}, \ldots, \obasisel^{k,m}) = \obasis^{k}.
        &&
        \originalrep := \obasisel^{k+1}
        &&
        [\epsilon_i, \epsilon_j) := \persinterval(\originalrep)
    \end{align*}
We may then partition $\goodcycleindices$ as the disjoint union $\vanishset \sqcup \orderset$, where 
    \begin{align*}
        \vanishset &= \{l \in \goodcycleindices : \birth(\cycle^l) < \birth(\cycle^{k+1})\; \text{ or } \; \death(\cycle^l) < \death(\cycle^{k+1}) \} \\
        \orderset &= \{l \in \goodcycleindices :  \birth(\cycle^l) = \birth(\cycle^{k+1}), \; \death(\cycle^l) = \death(\cycle^{k+1}), \; l \neq k+1 \}.
    \end{align*}
An optimal solution to Program \eqref{eq:edgelossgeneral} can then be expressed in  form 
    \begin{align*}
        \optimalrep 
        = 
        \obasisel^{k+1} + \partial_2[\goodedges, \goodtriangles] \q + A[\goodedges, \goodcycleindices] \q 
        = 
            \obasisel^{k+1}
            +
            \underbrace{\partial_2[\goodedges, \goodtriangles] \q}_{\cycleu}
            +
            \underbrace{A[\goodedges, \vanishset] \q[\vanishset]}_{\cyclev}
            +
            \underbrace{A[\goodedges, \orderset] \q[\orderset]}_{\cyclew}
    \end{align*}
where 
    \begin{align*}
        \cycleu \in \Boundaries(K_{\epsilon_i}) \subseteq \xij \cap \yij 
        &&
        \cyclev \in \xij \cap \yij
        &&
        \cyclew \in \spann(\{\obasisel^t : t \in \orderset \}).
    \end{align*}
Now put
    \begin{align*}
        F &: = \{\obasisel^t : t \in \orderset \} \sqcup \{\obasisel^{k+1}\}\\
        F' &: = \{\obasisel^t : t \in \orderset \} \sqcup\{\obasisel^{k+1} + \cyclew\} \\
        F'' &: = \{\obasisel^t : t \in \orderset \} \sqcup\{\obasisel^{k+1} + \cycleu + \cyclev + \cyclew\} =  \{\obasisel^t : t \in \orderset \} \sqcup \{\optimalrep\}\\        
    \end{align*}
Since $\cyclew \in \spann(\{\obasisel^t : t \in \orderset \})$, it is easily argued that $\spann(F) = \spann(F')$.  Thus $\spann(\qij(F))=\spann(\qij(F'))=\spann(\qij(F'')) = \Qij$.  Dimension counting thus implies that $\qij(F'')$ is an $(i,j)$-basis.  
    
Given this observation, it is straightforward to verify that $\{\obasisel^{k+1, 1}, \ldots, \obasisel^{k+1, m} \}$ is a bona-fide cycle basis and $\persinterval(\optimalrep) = \persinterval(\originalrep)$.  The desired conclusion follows.
\end{proof}

This establishes our primary objective:

\begin{theorem}
The set $\hcyclebasis^*$ returned by Algorithm \ref{alg:edge} is a bona fide persistent homology cycle basis of the filtered simpicial complex $K_\bullet$.
\end{theorem}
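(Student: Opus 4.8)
The plan is to prove the final theorem as an immediate corollary of the inductive statement established just above it. That inductive theorem shows that for each $k$, the family $\{\obasisel^{k,1}, \ldots, \obasisel^{k,m}\}$ is a persistent homology cycle basis with lifespans matching those of $\obasis^0$. Since Algorithm \ref{alg:edge} runs the for loop from $j = 0$ to $m-1$, the output is $\hcyclebasis^* = \{\obasisel^{m,1}, \ldots, \obasisel^{m,m}\}$, which is precisely the $k = m$ instance. So the proof is just: apply the preceding theorem with $k = m$.

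First I would state that the preceding theorem, proved by induction on $k$, applies in particular at $k = m$. Second, I would observe that the set returned in line 5 of Algorithm \ref{alg:edge} is by definition $\hcyclebasis^* = \{\obasisel^{m,1}, \ldots, \obasisel^{m,m}\}$, the underlying set of the sequence $\obasis^m$. Third, invoking the conclusion of the preceding theorem at $k = m$, this set is a persistent homology cycle basis of $K_\bullet$. I would also note, as a bonus consequence of \eqref{eq:samelifespan}, that $\persinterval(\obasisel^{m,l}) = \persinterval(\obasisel^{0,l})$ for every $l$, so each optimized cycle has the same lifespan interval as the original cycle it replaced — matching the claim in Section \ref{sec:edgelossmethods} that $\persinterval(\optimalrep) = \persinterval(\originalrep)$ for each $\originalrep \in \hcyclebasis$.

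There is essentially no obstacle here: all the real content lives in the inductive theorem, whose hard step is verifying that $\qij(F'')$ is an $(i,j)$-basis via the span equality $\spann(F) = \spann(F')$ and dimension counting, combined with Theorem \ref{thm:phindependencecharacterization}. The only thing to be careful about in the corollary itself is the bookkeeping that "$m$ iterations of the loop indexed $j = 0, \ldots, m-1$" produces exactly $\obasis^m$, i.e. that the indexing in the algorithm statement is consistent (each iteration advances the superscript by one, so after the $j = m-1$ iteration we have $\obasis^{m}$). Given that, the statement follows in one line, so I would keep the proof to a sentence or two that simply cites the inductive theorem at $k = m$.

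\begin{proof}
This is an immediate consequence of the previous theorem. By construction, Algorithm \ref{alg:edge} iterates the for loop over $j = 0, \ldots, m-1$, and each iteration replaces the sequence $\obasis^{j}$ by $\obasis^{j+1}$; hence after all iterations have completed, the algorithm has produced $\obasis^{m} = (\obasisel^{m,1}, \ldots, \obasisel^{m,m})$, and line 5 returns $\hcyclebasis^* = \{\obasisel^{m,1}, \ldots, \obasisel^{m,m}\}$, the set of its entries. Applying the previous theorem with $k = m$, this set is a persistent homology cycle basis of $K_\bullet$. Moreover, \eqref{eq:samelifespan} gives $\persinterval(\obasisel^{m,l}) = \persinterval(\obasisel^{0,l})$ for every $l$, so each returned cycle has the same lifespan interval as the corresponding element of the initial basis $\hcyclebasis$.
\end{proof}
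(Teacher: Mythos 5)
Your proof is correct and matches the paper exactly: the paper presents this theorem as an immediate consequence of the preceding inductive result (introduced with ``This establishes our primary objective''), which is precisely your application of that theorem at $k = m$. The only content beyond the citation is the indexing bookkeeping that the loop over $j = 0, \ldots, m-1$ terminates with $\obasis^m$, which you handle correctly.
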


\subsection{Correctness of Algorithm \ref{alg:rdvvolumeoptimization}}

Recall that \cite{Obayashi2018} defines a \emph{persistent volume} for a birth-death pair $(\sigma_{b_i}, \sigma_{d_i})$ as an $(n+1)$ chain $\volvec \in \Chains_{n+1}(K_{d_i})$ such that
\begin{align}
    \volvec   & = \sigma_{d_i} + \sum_{\sigma_k \in \mathcal{F}_{n+1}} \alpha_k\sigma_k \label{obacond1} \\
    (\partial_{n+1} \volvec)_\tau  & = 0 \quad \forall \tau \in \mathcal{F}_n \label{obacond2}\\
    (\partial_{n+1} \volvec)_{\sigma_{b_i}}  & \ne 0, \label{obacond3}
\end{align}
where 
    \begin{align}
        {\mathcal F}_l := \{ \simplex_k \in \Simplices_l(K) : b_i < k < d_i \}
        \label{eq:fdef}
    \end{align}
is the family of $l$-simplices whose birth time lies strictly between $b_i$ and $d_i$.
The linear program associated to $(\sigma_{b_i}, \sigma_{d_i})$ in \cite{Obayashi2018} can then be summarized as 
\begin{align}
\begin{split}
    \text{minimize } & \loss(\volvec) \\
    \text{subject to } 
    & \eqref{obacond1}, \eqref{obacond2}, \eqref{obacond3}\\
    & \textbf{v} \in \Chains_{n+1}(K_{d_i}) 
\end{split}
\tag{10}
\label{eq:generalminimalvolume}
\end{align}

Let us restate Algorithm \ref{alg:rdvvolumeoptimization} and the corresponding optimization problem, Program \eqref{eq:trianglelossgeneral}, for ease of reference.

\begin{algorithm}
\caption{Triangle-loss persistent cycle minimization}
\label{alg:rdvvolumeoptimization}
\begin{algorithmic}[1]
\STATE Place a filtration-preserving linear order $\le\dimss{l}$ on $\Simplices_l(K)$ for each $l$.
\STATE Compute an $R = \partial_{n+1} V$ decomposition as described in \cite{cohen2006vines} and the Supplementary Material.  We then obtain a set $\Gamma$ 
of birth/death pairs $(\sigma, \tau)$.
\STATE For each $(\sigma, \tau) \in \Gamma$ such that $\birth(\sigma) < \birth(\tau)$,  put 
    \begin{align*}
        \mathcal{F}_n &:= \{\sigma' \in \Simplices_n(K) : \birth(\sigma') \le \birth(\tau), \; \sigma \lneq^{(n)} \sigma'\} 
        \\
        \mathcal{F}_{n+1} &: = \{ \tau' \in \Simplices_{n+1}(K) : \birth(\sigma) \le \birth(\tau'), \; \tau' \lneq^{(n+1)} \tau \} 
    \end{align*}
    and ${\hat {\mathcal{F}}}_{n+1}:= \mathcal{F}_{n+1} \cup \{\tau\}$.  Compute a  solution to the corresponding Program \eqref{eq:trianglelossgeneral}, and denote this solution by  $\optimalrep^{\sigma, \tau}$. 
    \STATE Put   
        $
            \hat \deathbasis: = \{ \partial_{n+1} (\optimalrep^{\sigma, \tau}) : (\sigma, \tau ) \in  \Gamma \; \text{ and } \; \birth(\sigma) < \birth(\tau)\}$ 
            and let $\hat \deathbasis' := \{ \cycle \in \calm : \death(\cycle) = \infty  \}$, where $\calm$ is a persistent homology cycle basis calculated by the standard $R=DV$ method.
    \STATE Return $\deathbasis: = \hat \deathbasis \cup \hat \deathbasis'.$
\end{algorithmic}
\end{algorithm}

Recall that we refer to Program \eqref{eq:trianglelossgeneral} as the  \emph{general triangle-loss problem}.
\begin{align}
\begin{split}
 \text{minimize } & ||W \mathbf{v} ||_1 = \sum_{i=1}^N (v_i^+ + v_i^-)  \\
\text{subject to } &  \partial_{n+1}[ \sigma , \hat {\mathcal{F}}_{n+1} ] \volvec \neq 0     \\
&  \partial_{n+1}[\mathcal{F}_n, \hat {\mathcal{F}}_{n+1} ] \volvec = 0 \\
 & \volvec_{\tau} = 1\\
     & \mathbf{v}^+, \mathbf{v}^- \ge 0 \\
& \mathbf{v}^+, \mathbf{v}^- \in G^{ \hat {\mathcal{F}}_{n+1}}
\end{split}
\tag{15}
\label{eq:trianglelossgeneral}
\end{align}

To verify that Algorithm \ref{alg:rdvvolumeoptimization} returns a bona-fide persistent homology cycle basis, let us begin by placing the elements of $K$ into a sequence $(\sigma_1, \ldots, \sigma_{|K|})$ by ordering simplices first by birth time, second (that is, breaking ties when birth times agree) by dimension, and finally (breaking ties when birth times and dimensions agree) by the chosen linear orders $\le\dimss{l}$.  It is simple to verify that this rule defines a unique linear order on $K$, and that the filtration $K'_\bullet$ defined by
    \begin{align*}
        K'_i : = \{ \simplex_1, \ldots, \simplex_i\}
    \end{align*}
is a simplex-wise refinement of $K_\bullet$.

\begin{theorem}
\label{thm:specialcaseofobayashi}
Let $(\sigma, \tau)$ be a birth-death pair, and choose $b_i, d_i$ such that  $(\sigma_{b_i}, \sigma_{d_i})  = (\sigma, \tau)$.  Then the sets ${\mathcal F}_n$ and ${\mathcal F}_{n+1}$  defined  in Algorithm \ref{alg:rdvvolumeoptimization} both satisfy \eqref{eq:fdef}.  Consequently, Program \eqref{eq:trianglelossgeneral} is a special case of Program \eqref{eq:generalminimalvolume}.
\end{theorem}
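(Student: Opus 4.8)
The plan is to verify that the index sets $\mathcal{F}_n$ and $\mathcal{F}_{n+1}$ constructed in Algorithm \ref{alg:rdvvolumeoptimization} coincide with the sets $\{\sigma_k \in \Simplices_l(K) : b_i < k < d_i\}$ of \eqref{eq:fdef}, where $b_i, d_i$ are the indices of $\sigma, \tau$ in the simplex-wise refinement $K'_\bullet$ just described. Once this identification is made, every constraint of Program \eqref{eq:trianglelossgeneral} is literally one of the conditions \eqref{obacond1}, \eqref{obacond2}, \eqref{obacond3} defining a persistent volume (with $\hat{\mathcal{F}}_{n+1} = \mathcal{F}_{n+1} \cup \{\tau\}$ accounting for the $\sigma_{d_i}$ term in \eqref{obacond1} and the normalization $\volvec_\tau = 1$), so Program \eqref{eq:trianglelossgeneral} is exactly Program \eqref{eq:generalminimalvolume} for the pair $(\sigma_{b_i}, \sigma_{d_i})$, modulo the choice of loss function. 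The bulk of the work is therefore the set equality.

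First I would unpack the ordering rule: simplices of $K$ are linearly ordered first by birth time, then by dimension, then by the chosen order $\le\dimss{l}$ within a fixed birth time and dimension. I would record the immediate consequence that for a simplex $\sigma'$ of dimension $n$, the position $k'$ of $\sigma'$ in this order satisfies $k' < d_i$ if and only if either $\birth(\sigma') < \birth(\tau)$, or $\birth(\sigma') = \birth(\tau)$ and $\dim(\sigma') = n < n+1 = \dim(\tau)$, or $\birth(\sigma') = \birth(\tau)$, $\dim(\sigma') = n+1$, $\sigma' \lneq\dimss{n+1}\tau$ — but since $\sigma'$ has dimension $n \ne n+1$, the last case is vacuous and this reduces to $\birth(\sigma') \le \birth(\tau)$ together with $\sigma' \ne \tau$ (automatic by dimension). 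Similarly $k' > b_i$ iff $\birth(\sigma') > \birth(\sigma)$, or $\birth(\sigma') = \birth(\sigma)$ with $\dim(\sigma') = n = \dim(\sigma)$ and $\sigma \lneq\dimss{n}\sigma'$. Intersecting these two descriptions gives precisely the definition of $\mathcal{F}_n$ in Algorithm \ref{alg:rdvvolumeoptimization}: $\{\sigma' \in \Simplices_n(K) : \birth(\sigma') \le \birth(\tau),\ \sigma \lneq\dimss{n}\sigma'\}$, where I should double-check that the tie-breaking by dimension makes the upper bound $\le \birth(\tau)$ rather than $< \birth(\tau)$, and that the condition $b_i < k'$ is equivalent to $\sigma \lneq\dimss{n}\sigma'$ once $\birth(\sigma') \le \birth(\tau)$ is imposed — here one uses that $\sigma'$ having birth strictly less than $\birth(\sigma)$ is impossible since $\sigma'$ is an $n$-simplex counted among those alive in the window, so the only constraint left is the order relation among $n$-simplices of the same birth. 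The argument for $\mathcal{F}_{n+1}$ is symmetric, using that $\dim(\tau') = n+1 = \dim(\tau)$ forces the comparison with $\tau$ to be made via $\lneq\dimss{n+1}$, and that the lower index bound $b_i < k'$ reduces to $\birth(\sigma) \le \birth(\tau')$ (again the equal-birth, equal-dimension sub-case being handled by $\sigma$ having dimension $n \ne n+1$, hence $\sigma$ always precedes any $(n+1)$-simplex of the same birth time).

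The main obstacle I anticipate is bookkeeping with the tie-breaking rules: one must be careful that "birth time $\le \birth(\tau)$" in the definition of $\mathcal{F}_n$ is genuinely the right translation of "index $< d_i$" — this works only because $\sigma_{d_i} = \tau$ has dimension $n+1$ strictly larger than $n$, so every $n$-simplex born at time $\birth(\tau)$ precedes $\tau$ in the refinement, whereas an analogous statement would fail if $\mathcal{F}_n$ referred to simplices of dimension $n+1$. A clean way to organize this is to state and prove a short lemma: for $\sigma' \in \Simplices_n(K)$, $\sigma'$ precedes $\tau$ in $K'_\bullet$ iff $\birth(\sigma') \le \birth(\tau)$; and for $\tau' \in \Simplices_{n+1}(K)$, $\sigma$ precedes $\tau'$ in $K'_\bullet$ iff $\birth(\sigma) \le \birth(\tau')$. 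Granting that lemma and the within-dimension comparisons, the two set equalities $\mathcal{F}_n = \{\sigma_k : b_i < k < d_i,\ \dim = n\}$ and $\mathcal{F}_{n+1} = \{\sigma_k : b_i < k < d_i,\ \dim = n+1\}$ fall out, and then one concludes, exactly as in the theorem statement, that Program \eqref{eq:trianglelossgeneral} is the instance of Program \eqref{eq:generalminimalvolume} attached to $(\sigma_{b_i}, \sigma_{d_i})$ — hence Theorem \ref{thm:obayashi} applies and in particular $\persinterval(\partial_{n+1}\optimalrep^{\sigma,\tau}) = [\birth(\sigma), \birth(\tau))$, which is the fact downstream parts of the correctness proof will need.
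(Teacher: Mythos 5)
Your proposal is correct and is essentially the paper's proof: the paper dismisses this theorem as "a straightforward exercise in definition checking," and your argument is exactly that exercise carried out — translating the index conditions $b_i < k < d_i$ through the birth-then-dimension-then-$\le\dimss{l}$ tie-breaking to recover the two set definitions in Algorithm \ref{alg:rdvvolumeoptimization}, then matching the constraints of Program \eqref{eq:trianglelossgeneral} to conditions \eqref{obacond1}--\eqref{obacond3}. The only stylistic note is that the step "$b_i < k'$ iff $\sigma \lneq\dimss{n}\sigma'$" follows most cleanly from observing that the global order restricted to a fixed dimension coincides with the filtration-preserving order $\le\dimss{n}$, rather than from the slightly circular remark about $\sigma'$ being "alive in the window."
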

\begin{proof}
The proof is a straightforward exercise in definition checking.
\end{proof}

Now let $\hat \hcyclebasis$ be a set containing the boundary of one optimal solution $\optimalrep^{\sigma, \tau}$ to Program  \eqref{eq:trianglelossgeneral} for each birth-death pair $(\sigma, \tau) = (\sigma_{b_i}, \sigma_{d_i})$ (even if $\birth(\sigma) = \birth(\tau)$)).  Let $\caln$ be a persistent homology cycle basis for $K_\bullet'$, and let $\hat \hcyclebasis ' = \{ \cycle \in \caln : \death(\cycle) = \infty \}$ be the collection of cycle representatives in $\caln$ that never die.  Then, by combining Theorem \ref{thm:specialcaseofobayashi} with Theorem 5 of \cite{Obayashi2018}, we find that $\hcyclebasis: = \hat \hcyclebasis \cup \hat \hcyclebasis'$ is a persistent homology cycle basis of $K'_\bullet$.  

If we assume that the bounding volumes used to obtain $\hat \hcyclebasis$ are the same as those used to obtain $\hat \deathbasis$ in Algorithm \ref{alg:rdvvolumeoptimization}, and, likewise, that $\hat \hcyclebasis' = \hat \deathbasis'$ (this will be true provided we use the order $\le\dimss{l}$ when ordering columns of $\partial_l$ for the $K_\bullet$ calculation) then 
    \begin{align*}
        {\mathcal D} = \{\cycle \in \hcyclebasis : \birth(\cycle) < \death(\cycle) \}
    \end{align*}
where $\birth$ and $\death$ are the birth and death functions of $K_\bullet$, not $K'_\bullet$.

From here, it remains only to verify that $\mathcal D$ is a bona fide persistent homology cycle basis $K_\bullet$.  This follows from the following general observation.

\begin{theorem}
Let $L_\bullet'$ be a refinement of a filtration $L_\bullet$ on a simplicial complex $L$, and let $\hcyclebasis$ be a persistent homology cycle bases for $L_\bullet'$.  If $\birth$ and $\death$ are the birth and death functions of $L_\bullet$ and ${\mathcal D}: = \{\cycle \in \hcyclebasis : \birth(\cycle) < \death(\cycle) \}$, then $\cald$ is a persistent homology cycle basis of $L_\bullet$.
\end{theorem}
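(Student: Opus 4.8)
The plan is to work entirely in terms of the characterization of persistent homology cycle bases provided by Theorem \ref{thm:phindependencecharacterization}, which says that a family of $n$-cycles $E$ is a persistent homology cycle basis iff, for every pair $\ei < \ej$, the set $E^{i,j} = E \cap (\xij - \yij)$ is an $(i,j)$-basis — i.e. the quotient map $\qij : \xij \to \Qij$ restricts injectively on $E^{i,j}$ and carries it onto a basis of $\Qij$. Since the spaces $\xij$, $\yij$, $\Qij$ and the map $\qij$ depend only on the chain groups $\Chains_n$, $\Cycles_n$, $\Boundaries_n$ of the subcomplexes $L_s$, and $L_\bullet$ and $L'_\bullet$ present exactly the same subcomplexes (just possibly at different parameter values, and with some inserted intermediate complexes), the key observation will be that the relevant data transform in a controlled way when passing from $L'_\bullet$ to $L_\bullet$.

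First I would set up notation: let $(\mu_1 < \cdots < \mu_{T'})$ be the filtration values of $L'_\bullet$ and $(\lambda_1 < \cdots < \lambda_T)$ those of $L_\bullet$; since $L'_\bullet$ refines $L_\bullet$, each $L_{\lambda_k}$ equals some $L'_{\mu_{f(k)}}$ for a strictly increasing $f$, and conversely for every $\mu$ there is a largest $\lambda_k \le \mu$ with $L'_\mu \supseteq L_{\lambda_k}$ — but in fact the crucial point is that the set of \emph{subcomplexes} appearing in $L'_\bullet$ that are also of the form $L_{\lambda_k}$ is exactly all the $L_{\lambda_k}$. Next I would observe that the birth and death functions transform by a retraction: for a simplex (or cycle) $c$, $\birth_{L_\bullet}(c)$ is the smallest $\lambda_k$ such that $c$ already lives in $L'_{\mu_{f(k)}}$, i.e. $\birth_{L_\bullet}(c) = \lambda_k$ where $\mu_{f(k)}$ is the least filtration value of $L'_\bullet$ that is "snapped up" to $\lambda_k$; and similarly for $\death$. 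The essential combinatorial fact is that $\birth_{L_\bullet}(c) < \death_{L_\bullet}(c)$ iff $c$ is born and dies at genuinely different $L_\bullet$-values, which (because coarsening can only merge adjacent $L'$-values into a common $L$-value) is exactly the condition that distinguishes the cycles kept in $\mathcal D$ from those discarded.

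Then the main argument: fix $\ei < \ej$ among the $\lambda$'s and show $\mathcal D^{i,j}$ is an $(i,j)$-basis for $L_\bullet$. I would identify the $L_\bullet$-version of $\xij = \Cycles_n(L_{\ei}) \cap \Boundaries_n(L_{\ej})$ and $\yij = \Cycles_n(L_{\eineg}) + \Boundaries_n(L_{\ejneg})$ with the corresponding $L'_\bullet$-spaces $X^{i',j'}$, $Y^{i',j'}$ for suitable indices $i', j'$ in the refinement — namely $i'$ is the index in $L'_\bullet$ realizing $L'_{\mu_{i'}} = L_{\ei}$ and $j'$ likewise for $\ej$; and $L_{\eineg}$, $L_{\ejneg}$ are realized at the $L'$-indices one step \emph{below} $i'$, $j'$ within the coarsened grouping, so that $Y^{i',j'}$ for $L'_\bullet$ restricted appropriately coincides with $\yij$ for $L_\bullet$. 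Under this identification, $\Qij$ is the same quotient for both filtrations, so $\qij$ is the same map. The content is then: $\mathcal D \cap (\xij - \yij)$ equals $\hcyclebasis \cap (\xij - \yij)$, because any $\cycle \in \hcyclebasis$ lying in $\xij - \yij$ is in particular born at $\ei$ and not yet a boundary at $\ejneg$ but a boundary at $\ej$, hence has $\birth_{L_\bullet}(\cycle) = \ei < \ej$-ish $\le \death_{L_\bullet}(\cycle)$; one must check the strict inequality $\birth_{L_\bullet}(\cycle) < \death_{L_\bullet}(\cycle)$, which holds because $\cycle \notin \yij \supseteq \Boundaries_n(L_{\ejneg})$ forces its $L_\bullet$-death to be $\ge \ej > \ei \ge$ its $L_\bullet$-birth. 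Hence $\mathcal D^{i,j} = \hcyclebasis^{i',j'}$, which is an $(i,j)$-basis because $\hcyclebasis$ is a persistent homology cycle basis of $L'_\bullet$; applying Theorem \ref{thm:phindependencecharacterization} in the reverse direction finishes it.

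The main obstacle I anticipate is purely bookkeeping: pinning down exactly which $L'_\bullet$-index pair $(i', j')$ corresponds to a given $L_\bullet$-pair $(\ei, \ej)$ so that the "one step below" complexes $L_{\eineg}$ and $L_{\ejneg}$ line up correctly with the predecessors used in $Y^{i',j'}$ — when coarsening collapses several consecutive $L'$-values into a single $L$-value, the predecessor in $L_\bullet$ is \emph{not} the immediate predecessor in $L'_\bullet$, and one has to argue that this mismatch does not change $\xij$ or $\yij$ (it doesn't, because all the collapsed intermediate complexes lie strictly between $L_{\eineg}$ and $L_{\ei}$ and contribute nothing new to cycles-at-$\ei$ or boundaries beyond what $L_{\eineg}$ already gives — wait, that requires care, since intermediate complexes can introduce new boundaries). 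The honest resolution is that we need $\yij$ for $L_\bullet$ to equal $Y^{i'',j''}$ for $L'_\bullet$ where $i''$, $j''$ are the $L'$-indices realizing $L_{\eineg}$, $L_{\ejneg}$ themselves (not their $L'$-predecessors), and then note $X^{i',j'} \cap Y^{i'',j''}$ gives the same quotient $\Qij$; so the statement of Theorem \ref{thm:phindependencecharacterization} should be applied with a mild generalization allowing non-consecutive index pairs, or one simply checks directly that $\hcyclebasis$ being a persistent homology cycle basis for $L'_\bullet$ implies the required basis property at these non-consecutive indices by a telescoping/filtered-basis argument. I would present this telescoping reduction as the technical heart of the proof and keep the rest at the level of definition-chasing.
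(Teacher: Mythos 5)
Your route is workable in outline but, as written, has a genuine gap at exactly the point you flag as ``the technical heart.'' The identification ``$Q^{i,j}$ is the same quotient for both filtrations'' is false whenever the refinement $L'_\bullet$ inserts complexes strictly between $L_{\epsilon_{i-1}}$ and $L_{\epsilon_i}$ (or between $L_{\epsilon_{j-1}}$ and $L_{\epsilon_j}$): the space $Y^{i,j}$ for $L_\bullet$ is built from $\Cycles_n(L_{\epsilon_{i-1}})$ and $\Boundaries_n(L_{\epsilon_{j-1}})$, which sit several steps below the indices realizing $L_{\epsilon_i}$ and $L_{\epsilon_j}$ in $L'_\bullet$, so the $L_\bullet$-quotient is strictly larger than the corresponding consecutive-index $L'_\bullet$-quotient, and $\cald \cap (X^{i,j}-Y^{i,j})$ is a union over a whole two-dimensional grid of $L'_\bullet$-cells rather than a single $\hcyclebasis^{i',j'}$. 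Consequently the claimed equality $\cald^{i,j}=\hcyclebasis^{i',j'}$ fails in general, and the ``telescoping/filtered-basis argument'' you invoke to repair it --- showing that bases of all the small consecutive-index quotients assemble into a basis of the coarse quotient --- is precisely the nontrivial content of the theorem in this formulation; it is asserted but never carried out (you even start to claim the intermediate complexes contribute nothing and then correctly retract that). So the proposal does not close.

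The gap is avoidable because the characterization via $(i,j)$-bases is unnecessary here: the paper argues directly from the definition. A persistent homology cycle basis is a set of cycles with nonempty lifespans such that, for each filtration value, the cycles alive at that value form a homological cycle basis of the corresponding homology group. For each value $\lambda$ of $L_\bullet$ there is a value $\mu$ of $L'_\bullet$ with $L_\lambda = L'_\mu$, and a cycle $\cycle$ satisfies $\lambda \in \persinterval(\cycle)$ with respect to $L_\bullet$ iff $\cycle \in \Cycles_n(L_\lambda) \setminus \Boundaries_n(L_\lambda) = \Cycles_n(L'_\mu) \setminus \Boundaries_n(L'_\mu)$, iff $\mu$ lies in the $L'_\bullet$-lifespan of $\cycle$; the cycles discarded in forming $\cald$ have empty $L_\bullet$-lifespan and are therefore alive at no $\lambda$. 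Hence $\cald_\lambda = \hcyclebasis_\mu$ is a homological cycle basis of $\Homologies_n(L_\lambda)$, and both defining conditions hold with no quotient bookkeeping at all. I recommend replacing the $(i,j)$-machinery with this direct check; reserve Theorem \ref{thm:phindependencecharacterization} for arguments (like the correctness of Algorithm \ref{alg:edge}) where cycles are actually being modified.
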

\begin{proof}
Recall that, by definition, a set $\cale$ is a persistent homology cycle basis of $K_\bullet$ iff two criteria hold: (i) $\persinterval(\cycle)$ must be nonempty for each $\cycle \in \cale$, and (ii) $\{ [\cycle]\in \Homologies_n(K_{\epsilon_i}; G) : \epsilon_i \in \persinterval(\cycle) \}$ is a basis for $\Homologies_n(K_{\epsilon_i}; G)$ for each $i$.  Since $\hcyclebasis$ is a persistent homology cycle basis for $L'_\bullet$, it is a straightforward exercise in definition checking to verify that $\cald$ is a persistent homology cycle basis for $L_\bullet$.
\end{proof}

This establishes our primary objective:

\begin{theorem}[Correctness of Algorithm \ref{alg:rdvvolumeoptimization}]
Algorithm \ref{alg:rdvvolumeoptimization} returns a bona-fide persistent homology cycle basis of $K_\bullet$.
\end{theorem}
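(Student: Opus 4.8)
The plan is to reduce the correctness of Algorithm \ref{alg:rdvvolumeoptimization} to two facts already assembled in the excerpt: first, that each Program \eqref{eq:trianglelossgeneral} solved in Step 3 is literally an instance of Obayashi's volume program \eqref{eq:generalminimalvolume} for the simplex-wise refinement $K'_\bullet$ (this is Theorem \ref{thm:specialcaseofobayashi}), and second, the observation that a persistent homology cycle basis for a refinement $L'_\bullet$ restricts, after discarding the cycles whose lifespan collapses, to a persistent homology cycle basis of the coarser filtration $L_\bullet$. So the proof is essentially a matter of threading these together carefully and checking that the bookkeeping in Algorithm \ref{alg:rdvvolumeoptimization} matches the bookkeeping in the abstract argument.

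First I would fix the simplex-wise refinement $K'_\bullet$ explicitly: order the simplices of $K$ first by birth time, then by dimension, then by the chosen orders $\le\dimss{l}$, and set $K'_i := \{\sigma_1, \ldots, \sigma_i\}$; one checks this is a well-defined linear order and that $K'_\bullet$ refines $K_\bullet$. Next, for $K'_\bullet$ one may run Obayashi's construction verbatim: by Theorem \ref{thm:specialcaseofobayashi} the sets $\mathcal F_n, \mathcal F_{n+1}$ built in Step 3 coincide with \eqref{eq:fdef}, so the optimal volume $\optimalrep^{\sigma,\tau}$ and its boundary are exactly an Obayashi optimal volume / volume-optimal cycle for the birth/death pair $(\sigma,\tau)$ of $K'_\bullet$. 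Then invoking Theorem 5 of \cite{Obayashi2018} (the iterated version of Theorem \ref{thm:obayashi}(3)), the set $\hat\hcyclebasis$ of these boundaries, together with the infinite-interval cycles $\hat\hcyclebasis'$ of any persistent homology cycle basis of $K'_\bullet$, forms a persistent homology cycle basis of $K'_\bullet$. The one subtlety here is making sure the ``background'' infinite-bar cycles used by Obayashi's theorem can be taken to be exactly the $\hat\deathbasis'$ produced by the $R=DV$ computation on $K_\bullet$; this works because an $R=DV$ decomposition of $\partial_l$ for $K_\bullet$, with columns ordered by $\le\dimss{l}$, is simultaneously a valid decomposition for $K'_\bullet$, so the two persistence computations agree on which cycles never die.

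Finally I would apply the general restriction lemma (the last unnumbered Theorem in the excerpt): with $L_\bullet = K_\bullet$ and $L'_\bullet = K'_\bullet$, the set $\mathcal D = \{\cycle \in \hcyclebasis : \birth(\cycle) < \death(\cycle)\}$ measured by the birth/death functions of $K_\bullet$ is a persistent homology cycle basis of $K_\bullet$, and this $\mathcal D$ is precisely $\hat\deathbasis \cup \hat\deathbasis' = \deathbasis$ returned by the algorithm (a finite interval of $K'_\bullet$ either stays a nondegenerate interval of $K_\bullet$ or collapses to a point, in which case its cycle lands in $\hat\deathbasis$ only via pairs with $\birth(\sigma)<\birth(\tau)$, matching the guard in Steps 3--4). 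The main obstacle I expect is not any single deep step but the consistency checking in this last paragraph: one must verify that the interval-relabelling between $K'_\bullet$ and $K_\bullet$ exactly matches the ``$\birth(\sigma) < \birth(\tau)$'' filter used in Algorithm \ref{alg:rdvvolumeoptimization}, and that the choice of linear orders $\le\dimss{l}$ really does force the $K_\bullet$ and $K'_\bullet$ persistence computations to produce the same infinite-bar representatives — these are the places where an off-by-one in the bookkeeping could silently break the argument.
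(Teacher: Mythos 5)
Your proposal follows essentially the same route as the paper's own proof: construct the simplex-wise refinement $K'_\bullet$ by ordering simplices by birth, then dimension, then the chosen linear orders; use the fact that Program \eqref{eq:trianglelossgeneral} is a special case of Obayashi's volume program on $K'_\bullet$; invoke Obayashi's Theorem 5 to get a persistent homology cycle basis of $K'_\bullet$; and then apply the general restriction lemma to pass from $K'_\bullet$ back to $K_\bullet$. You even flag the same bookkeeping subtleties (agreement of the infinite-bar representatives and the $\birth(\sigma)<\birth(\tau)$ filter) that the paper handles by assumption, so the argument is correct and matches the paper's.
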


\section{Supplementary Tables and Figures}

 \begin{figure}[h!]
 \begin{center}
 \includegraphics[width=1\textwidth]{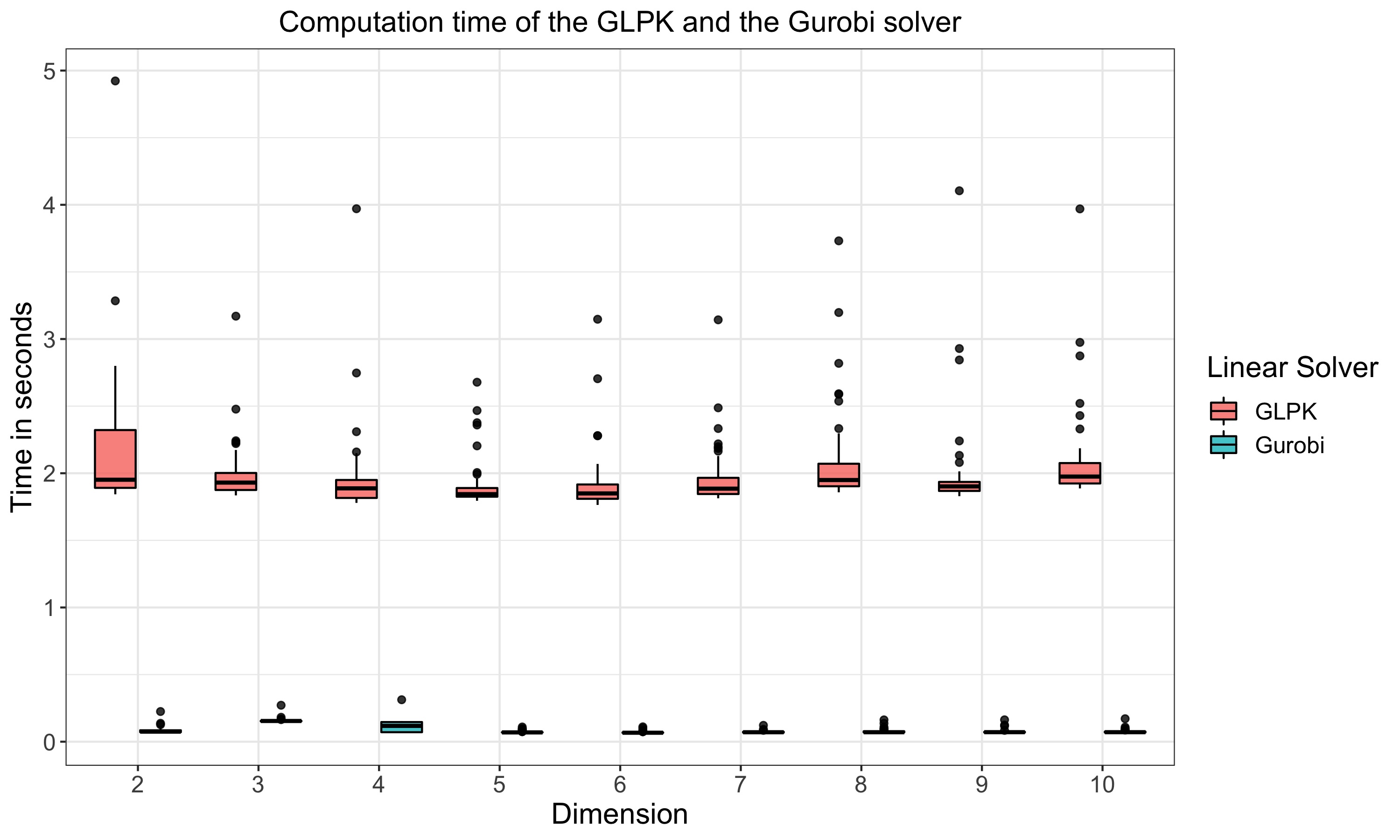}
 \end{center}
 \caption{Computation time of the GLPK linear solver (red) and the Gurobi linear solver (green) to solve the uniform/length-weighted edge-loss minimal problems in Algorithm 1. We perform experiments on $90$ data sets, 10 for each dimension 2-10, generated from the normal distribution. The horizontal axis is the dimension of the data set, and the vertical axis is the time it takes to solve an optimization problem. We observe that the Gurobi solver is consistently faster than the GLPK solver and that computation time seems fairly constant across dimension.}\label{fig:glpk_gurobi}
 \end{figure}

\begin{center}
\begin{table}[]
\caption{{\normalsize{Classifying the coefficients of the optimal cycles for all of the real-world data discussed in Section 5.1 as well as all of the synthetic sets discussed in Section 5.2. The rows are labeled by the coefficient type of the cycle representatives: ``Integral'' means the coefficients for the cycle representative $\optimalrep$ are in $\mathbb{Z}$ and ``In $\{-1,0,1\}$'' means the coefficients for the representative $\optimalrep$ are in $\{-1,0,1\}$. For the columns, $\optimalrep$ represents the optimal representative with its superscript indicating the type of optimization problem: $I$ for integer programming and $NI$ for linear programming, and its subscript indicating the type of optimal cycle: ${E\text{-}Len}, E\text{-}Unif, T\text{-}Unif$ refer to edge loss length-weighted minimal (minimizing total length of $1$-simplices), edge loss uniform (minimizing total number of $1$-simplices), and triangle loss uniform (minimizing the number of $2$-simplices a cycle representative bounds), respectively.}}}
\centering


{\scriptsize{ \begin{tabular}{ |>{\centering}m{7em} *{10}{>{\centering\arraybackslash}m{2.5em} }|}
 \hline
 & \multicolumn{10}{c|}{\textbf{Edge-loss filtered homological optimal cycles} (Program \eqref{eq:edgelossgeneral})} \\
\hline
& \multicolumn{4}{c}{\textbf{Randomly Generated Data Sets}} & & 
 \multicolumn{4}{c}{\textbf{Real-World Data Sets}} &  \\  \cline{2-5}  \cline{7-10}

\textbf{Coefficient Type} & $\x\I_{E\text{-}Len}$ & $\x\NI_{E\text{-}Len}$ & $\x\I_{T\text{-}Unif}$ & $\x\NI_{E\text{-}Unif}$ &  & $\x\I_{E\text{-}Len}$ & $\x\NI_{E\text{-}Len}$ & $\x\I_{E\text{-}Unif}$ & $\x\NI_{E\text{-}Unif}$ & \\
\hline
\textbf{Integral}  & $100\%$ &$100\%$&   $100\%$ & $100\%$ &  & $100\%$ &$100\%$&  $100\%$ & $100\%$ & \\
\textbf{In $\{-1, 0, 1\}$} &  $100\%$  & $100\%$   &$100\%$ & $100\%$ & & $100\%$ &$100\%$&  $100\%$ & $100\%$ & \\ \hline
 & \multicolumn{10}{c|}{\textbf{Edge-loss persistent homological optimal cycles} (\pr (8))}  \\\hline

& \multicolumn{4}{c}{\textbf{Randomly Generated Data Sets}} & & 
 \multicolumn{4}{c}{\textbf{Real-World Data Sets}} &  \\  \cline{2-5}  \cline{7-10}
 \textbf{Coefficient Type} & $\x\I_{E\text{-}Len}$ & $\x\NI_{E\text{-}Len}$ & $\x\I_{T\text{-}Unif}$ & $\x\NI_{E\text{-}Unif}$ &  & $\x\I_{E\text{-}Len}$ & $\x\NI_{E\text{-}Len}$ & $\x\I_{E\text{-}Unif}$ & $\x\NI_{E\text{-}Unif}$ & \\

\hline 
\textbf{Integral}  & $100\%$ &$100\%$&   $100\%$ & $100\%$  &&  $100\%$ & $100\%$ &  $100\%$ & $99.94\%$ & \\
\textbf{In $\{-1, 0, 1\}$} &  $100\%$  & $100\%$   &$100\%$ & $100\%$ &  & $100\%$ & $100\%$&  $100\%$ & $99.94\%$ & \\ \hline

& \multicolumn{10}{c|}{\textbf{Triangle-loss optimal cycles} (\pr \eqref{eq:trianglelossgeneral})}  \\\hline
& \multicolumn{4}{c}{\textbf{Randomly Generated Data Sets}} & & 
 \multicolumn{4}{c}{\textbf{Real-World Data Sets}} &  \\  \cline{2-5}  \cline{7-10}
 \textbf{Coefficient Type} & $\x\I_{T\text{-}Unif}$ &    $\x\NI_{T\text{-}Unif}$ & $\x\I_{T\text{-}Area}$ & $\x\NI_{T\text{-}Area}$  &  & $\x\I_{T\text{-}Unif}$ &    $\x\NI_{T\text{-}Unif}$ & $\x\I_{T\text{-}Area}$ & $\x\NI_{T\text{-}Area}$ &\\
\textbf{Integral}  &  $100\%$ &$99.99\%$ &$100\%$ &$100\%$  &  & $100\%$ &  $100\%$ &  $100\%$ & $100\%$  & \\
\textbf{In $\{-1, 0, 1\}$} &  $100\%$ &$99.99\%$ &$100\%$ &$100\%$  &  & $100\%$ &  $100\%$ &  $100\%$ & $100\%$  & \\ \hline

\end{tabular}
}}
\label{entry}
\end{table}
\label{tab:IntegerCoefficients}
\end{center}

\end{document}